\newtheorem{theorem}{Theorem}[section]
\newtheorem{lemma}[theorem]{Lemma}
\theoremstyle{definition}
\newtheorem{corollary}[theorem]{Corollary}
\newtheorem{proposition}[theorem]{Proposition}
\newtheorem{remark}[theorem]{Remark}
\numberwithin{equation}{section}
\newcounter{count}
\newcommand{\diff}{\mathrm{d}}
\newcommand{\Diffb}{\mathbf{D}}
\newcommand{\Diff}{\mathrm{D}}
\newcommand{\dk}{\, \diff \mathbf{k}}
\newcommand{\dx}{\, \diff x}
\newcommand{\dy}{\, \diff y}
\newcommand{\FF}{\mathcal F}
\newcommand{\GG}{\mathcal G}
\newcommand{\LL}{\mathcal L}
\newcommand{\WW}{\mathcal W}
\renewcommand{\i}{\mathrm{i}}
\newcommand{\bfk}{\mathbf{k}}
\begin{document}

\begin{Frontmatter}

\title[Article Title]{A plethora of fully localised solitary waves for the full-dispersion Kadomtsev--Petviashvili equation}

\author[1]{Mats Ehrnstr\"{o}m}
\author[2]{Mark D. Groves}

\authormark{Ehrnstr\"{o}m and Groves}

\address[1]{\orgdiv{Department of Mathematical Sciences}, \orgname{Norwegian University of Science
and Technology}, \orgaddress{\city{7491 Trondheim},  \country{Norway}}}

\address[2]{\orgdiv{Fachrichtung Mathematik}, \orgname{Universit\"{a}t des Saarlandes}, \orgaddress{Postfach 151150, \city{66041 Saarbr\"{u}cken}, \country{Germany}}. \email{groves@math.uni-sb.de}}


\keywords{water waves, solitary waves, FDKP equation, KP equation}

\keywords[MSC Codes]{\codes[Primary]{76B15}; \codes[Secondary]{35Q35}}

\abstract{The KP-I equation
arises as a weakly nonlinear model equation for gravity-capillary waves {with} Bond number
$\beta>1/3$, also called strong surface tension. This equation has recently
been shown to have a family of nondegenerate, symmetric `fully localised' or `lump' solitary waves which decay to zero in all spatial directions. The full-dispersion KP-I equation
is obtained by retaining the exact dispersion relation in the modelling from the water-wave problem.
In this paper we show that the FDKP-I equation also has a family of symmetric fullly localised solitary waves which are obtained by
casting it as a perturbation of the KP-I equation and applying a suitable variant of the
implicit-function theorem.}

\end{Frontmatter}



\section{Introduction}\label{sec:intro}

\subsection{The KP and FDKP equations}
The full-dispersion Kadomtsev--Petviashvili (FDKP) equation
\begin{equation}\label{FDKP}
u_t + m(\Diffb) u_x + 2 u u_x  = 0,
\end{equation}
where the Fourier multiplier $m$ is given by
\[m(\Diffb) = \left( 1 + \beta |\Diffb|^2 \right)^{\frac{1}{2}} \left( \frac{\tanh  |\Diffb|}{|\Diffb|} \right)^{\frac{1}{2}} \left( 1 + \frac{ 2\Diff_2^2}{\Diff_1^2} \right)^{\frac{1}{2}}\]
with $\Diffb = -\i(\partial_x, \partial_y)$, was introduced by \citet{Lannes} (see also \citet{LannesSaut14}) as an alternative to the classical
Kadomtsev--Petviashvili (KP) equation
\begin{equation}\label{KP}
(\zeta_t-2\zeta\zeta_x+\tfrac{1}{2}(\beta-\tfrac{1}{3})\zeta_{xxx})_x -\zeta_{yy}=0,
\end{equation}
which arises as a weakly nonlinear approximation for three-dimensional
gravity-capillary water waves. The parameter $\beta>0$ measures the relative strength of surface tension; the case $\beta>\frac{1}{3}$
for strong surface tension is termed KP-I, {while} the case $\beta<\frac{1}{3}$ for weak surface tension is KP-II. The analogous convention {is used} for the full-dispersion FDKP equation, giving us {an} FDKP-I equation for the strong surface tension case {studied in this paper}.

An FDKP solitary wave is a nontrivial, evanescent solution of \eqref{FDKP} of the form $u(x,y,t)=u(x-ct,y)$
with wave speed $c>0$, that is, a localised solution of the equation
\begin{equation}\label{steady FDKP}
-c u + m(\Diffb)u +  u^2 = 0.
\end{equation}
Similarly, a KP solitary wave is a nontrivial, evanescent solution of \eqref{KP} of the form\linebreak
$\zeta(x,y,t)=\zeta(x-\tilde{c}t,y)$
with wave speed $\tilde{c}>0$, that is, a localised solution of the equation
\begin{equation}\label{steady KP}
(\tilde{c} -1)\zeta  + \tilde{m}(\Diffb)\zeta +  \zeta^2 = 0,
\end{equation}
where
\[
\tilde{m}(\Diffb) = 1+\frac{\Diff_2^2}{\Diff_1^2} + \tfrac{1}{2}(\beta- \tfrac{1}{3})\Diff_1^2.
\]
{ Let us emphasise} that these waves are fully localised, that is, decaying in all spatial directions. The KP equation allows a scaling,
{ such that} the wave speed $\tilde{c}$ can be normalised to unity by the transformation
$\zeta(x,y) \mapsto \tilde{c}\zeta(\tilde{c}^\frac{1}{2}x,\tilde{c}y)$, which
converts \eqref{steady KP} into the equation
\begin{equation}
\tilde{m}(\Diffb)\zeta +  \zeta^2 = 0. \label{normalised steady KP}
\end{equation}

While it is known that the KP-II equation does not admit any 
solitary waves \citep{deBouardSaut97b}, the situation is rather different for the KP-I equation. Letting $\zeta(x,y)=\zeta(\tilde{x},\tilde{y})$ with\linebreak
$(\tilde{x},\tilde{y})=(\frac{1}{2}(\beta-\frac{1}{3}))^{\frac{1}{2}}(x,y)$,
one can write the steady KP equation \eqref{normalised steady KP} in the alternative form
\begin{equation}
\partial_x^2 (-\partial_x^2 \zeta + \zeta +\zeta^2)+\partial_y^2 \zeta =0, \label{Normalised KP}
\end{equation}
in which we have dropped the tildes for notational simplicity. This equation
has a family of explicit symmetric `lump' solutions of the form
\begin{equation}
\zeta_k^\star(x,y)=-6\partial_x^2 \log \tau_k^\star(x,y),\qquad k=1,2,\ldots, \label{log lump}
\end{equation}
where $\tau_k^\star$ is a polynomial of degree $k(k+1)$ with $\tau_k^\star(x,y)=\tau_k^\star(-x,y)=\tau_k^\star(x,-y)$ for all $(x,y) \in {\mathbb R}^2$;
the first two members of the family are
\begin{align*}
\tau_1^\star(x,y) &=x^2+y^2+3, \\
\tau_2^\star(x,y) &=x^6+3x^4y^2+3x^2y^4+y^6+25x^4+90x^2y^2+17y^4-125x^2+475y^2+1875.
\end{align*}
Note that the KP lump solutions \(\zeta_k^\star\) are smooth, decaying rational functions, so that the same is true of their
derivatives of all orders. The functions $\zeta_1^\star$ and $\zeta_2^\star$ are sketched in Figure \ref{lumps}.

\begin{figure}[h]
\begin{center}
\includegraphics[scale=0.33]{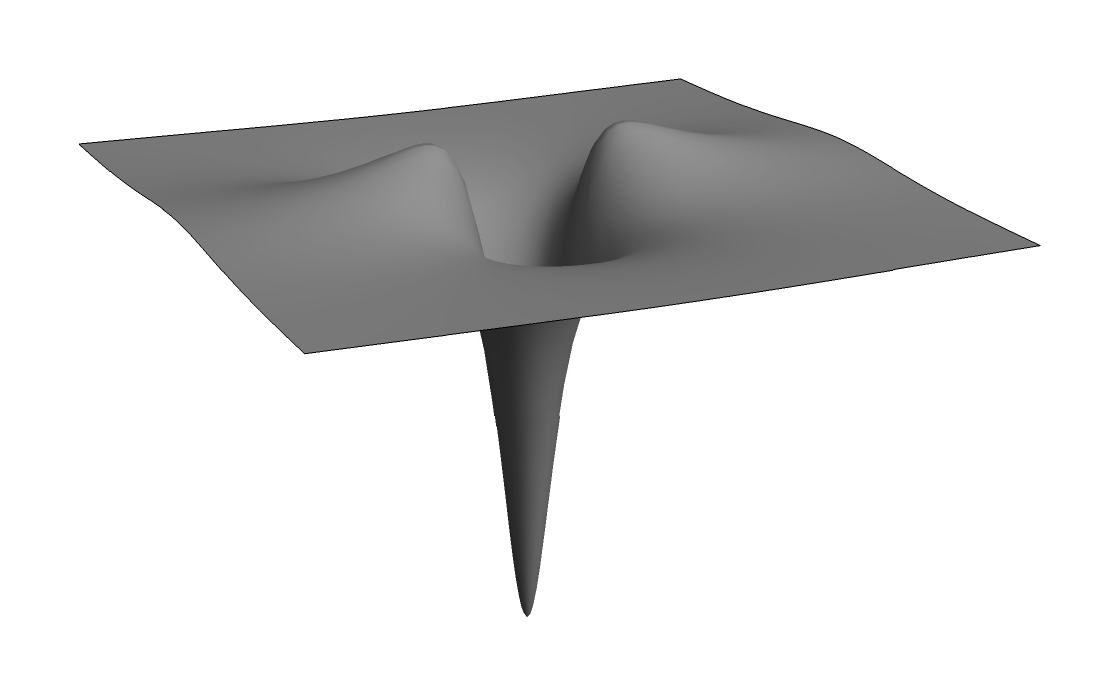}\hspace{5mm}
\includegraphics[scale=0.33]{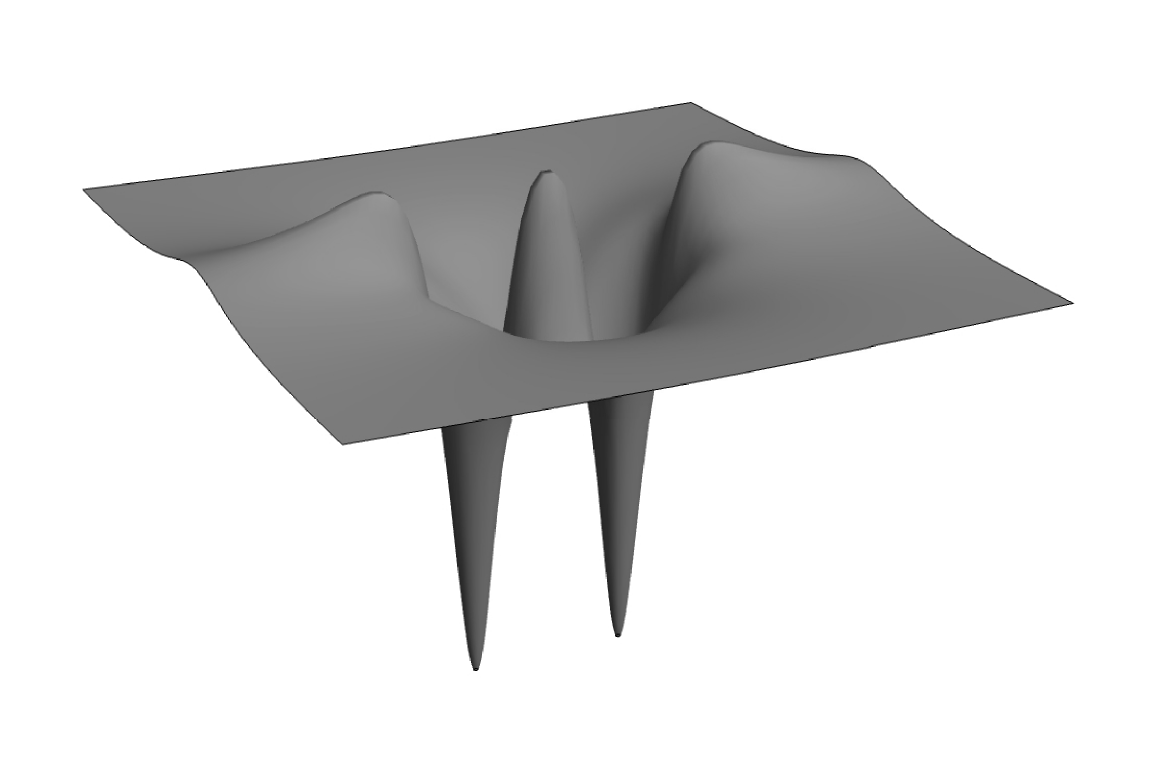}
\caption{The KP lumps $\zeta_1^\star$ (left) and $\zeta_2^\star$ (right)}\label{lumps}
\end{center}
\end{figure}

The basic lump solution $\zeta_1^\star$ was found by \citet{ManakovZakharovBordasItsMatveev77}, while the higher-order lump solutions
were discovered by \citet{PelinkovskiiStepanyants93} and fully classified by \citet{GalkinPelinovskiiStepanyants93} (see also \citet{Pelinovskii94,Pelinovskii98}, \citet{Clarkson08} and \citet{ClarksonDowie17}). These results have recently been reappraised by 
\citet{LiuWei19} and \citet{LiuWeiYang24a,LiuWeiYang24b}, who in particular discussed the nongeneracy of the lump solutions. Their work is summarised in the following result; see also the comments below the lemma.\pagebreak

\begin{lemma}$ $ \label{LWY results}

\begin{list}{(\roman{count})}{\usecounter{count}}
\item Every smooth, algebraically decaying solution of the KPI equation \eqref{Normalised KP} has the form\linebreak
$\zeta(x,y)=-6\partial_x^2 \log \tau(x,y)$,
for some polynomial $\tau$ of degree $k(k+1)$ with $k \in {\mathbb N}$ {and satisfies
 \(|\zeta(x,y)| \lesssim (1+x^2+y^2)^{-1}\) for all $(x,y) \in {\mathbb R}^2$}.
\item There is a unique symmetric solution $\zeta_k^\star$ of the form \eqref{log lump} for each $k \in {\mathbb N}$ with $k(k+1) \leq 600$.
\item The solutions $\zeta_1^\star$, $\zeta_2^\star$ are nondegenerate: the only smooth evanescent solution
of the linearised equation
\[
\partial_x^2(-\partial_x^2 w +w +2 \zeta_k^\star w)+\partial_y^2w=0
\]
for $k=1$, $2$ {that} is also invariant under $w(x,y) \mapsto w(-x,y)$ and $w(x,y) \mapsto w(x,-y)$ is $w(x,y)=0$.
\end{list}
\end{lemma}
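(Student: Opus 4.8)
My plan is to treat parts (i) and (ii) as manifestations of the complete integrability of the KP-I equation, and to regard part (iii), the nondegeneracy of the linearised operator, as the genuinely analytic assertion that demands the most work.

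For (i) I would pass to the Hirota bilinear formalism. The substitution $\zeta=-6\partial_x^2\log\tau$ converts \eqref{Normalised KP} into a bilinear equation for $\tau$, whereupon the problem reduces to classifying its \emph{polynomial} solutions. Within the Sato theory of the KP hierarchy these are distinguished Schur polynomials, and the smooth, algebraically decaying lumps are exactly those for which $\tau$ is real-zero-free and grows like $(x^2+y^2)^{k(k+1)/2}$, forcing $\deg\tau=k(k+1)$. The decay bound is then a direct asymptotic computation: since $\log\tau\sim\tfrac{1}{2}k(k+1)\log(x^2+y^2)$ at infinity, differentiating twice in $x$ produces a leading term of size $(1+x^2+y^2)^{-1}$, giving $|\zeta(x,y)|\lesssim(1+x^2+y^2)^{-1}$.

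For (ii) I would impose the reflection symmetries $\tau(x,y)=\tau(-x,y)=\tau(x,-y)$ on the family produced in (i). These linear constraints cut the coefficients down to a finite-dimensional set at each degree, and one verifies that, after the obvious normalisation, precisely one symmetric representative survives for each admissible $k$. The explicit cut-off $k(k+1)\le600$, that is $k\le24$, strongly suggests that this uniqueness is checked over a finite range of degrees with computer assistance rather than by a closed-form argument.

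The crux is (iii). Writing $Lw=\partial_x^2(-\partial_x^2 w+w+2\zeta_k^\star w)+\partial_y^2 w$, I would first account for the kernel elements forced by the symmetries of \eqref{Normalised KP}: the $x$- and $y$-translations contribute $\partial_x\zeta_k^\star$ and $\partial_y\zeta_k^\star$, which are odd in $x$ and in $y$ respectively and therefore lie outside the symmetric subspace, and any additional symmetry-generated mode can be checked to be non-symmetric as well. The substance of the lemma is that nothing else remains: no nontrivial \emph{symmetric} evanescent $w$ solves $Lw=0$. Removing the harmless factor $\partial_x^2$ recasts this as $\bigl(-\partial_x^2+1+2\zeta_k^\star+\partial_x^{-2}\partial_y^2\bigr)w=0$, a self-adjoint but genuinely two-dimensional and \emph{nonlocal} problem with the explicit rational potential $\zeta_k^\star$. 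To pin down its decaying solutions exactly I would again lean on the integrable structure --- squared-eigenfunction identities or a Darboux/B\"acklund transformation intertwining the linearisation with a constant-coefficient operator whose kernel is transparent. I expect this to be the main obstacle: the absence of any elementary ODE reduction means the argument must exploit the special algebraic features of the lumps, which is presumably why (iii) is established only for the two lowest members $\zeta_1^\star$ and $\zeta_2^\star$.
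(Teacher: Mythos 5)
You should first be aware that the paper contains no proof of Lemma \ref{LWY results} whatsoever: it is an imported summary of results from the literature, attributed to \citet{LiuWei19} and \citet{LiuWeiYang24a,LiuWeiYang24b} (building on the classical constructions of \citet{ManakovZakharovBordasItsMatveev77}, \citet{PelinkovskiiStepanyants93} and \citet{GalkinPelinovskiiStepanyants93}), and the remark immediately following the lemma even states that the nondegeneracy for $k \geq 3$ is only sketched in the cited work and is here ``accepted'' as valid. The lemma functions purely as input --- part (iii) in particular is the hypothesis that makes the implicit-function-theorem argument of Section \ref{Existence} run --- so there is no in-paper argument for your proposal to be measured against; the only meaningful question is whether your sketch would stand on its own as a proof. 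As a piece of reverse engineering it is on target: Hirota bilinearisation and polynomial tau functions for (i), a finite, symmetry-constrained (and indeed computer-assisted) verification for the range $k(k+1) \leq 600$ in (ii), and B\"{a}cklund/Darboux-type arguments exploiting integrability for (iii) is a fair description of how the cited papers actually proceed.

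As a proof, however, it has genuine gaps at precisely the hard points. In (i), the statement ``the problem reduces to classifying its polynomial solutions'' assumes the crux: a priori a smooth, algebraically decaying solution of \eqref{Normalised KP} need not be rational, let alone of the form $-6\partial_x^2\log\tau$ with $\tau$ polynomial; establishing that every such solution arises from a polynomial tau function of degree exactly $k(k+1)$ is the substance of the classification theorem, and invoking Sato theory and Schur polynomials only describes solutions already known to lie in the hierarchy's algebraic family. (Your decay computation is fine, but it is conditional on this unproved reduction.) In (iii), noting that the translation modes $\partial_x\zeta_k^\star$, $\partial_y\zeta_k^\star$ are excluded by symmetry is correct but only disposes of the \emph{known} kernel; the entire difficulty is to show no other symmetric evanescent solution exists, and for this your text offers ``squared-eigenfunction identities or a Darboux/B\"{a}cklund transformation intertwining the linearisation with a constant-coefficient operator'' --- a naming of techniques rather than an argument. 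Constructing that intertwining and classifying the resulting kernel is the content of \citet{LiuWei19} for $k=1$ and of \citet{LiuWeiYang24b} beyond it, and you concede as much by calling it ``the main obstacle.'' So the proposal should be graded as a plausible research programme reconstructing the cited literature, not as a proof of the lemma.
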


{ It is conjectured that part (ii) actually holds for all $k \in {\mathbb N}$ \citep{LiuWeiYang24a}; furthermore a sketch of the proof of the nondegeneracy of
$\zeta_k^\star$ for $k \geq 3$ was given by \citet{LiuWeiYang24b}, and here we accept the validity of this result. 
The existence of a solitary-wave solution to the {FDKP-I} equation was proved by \citet{EhrnstroemGroves18} using a variational method, and
in this paper we considerably improve our previous result by using a perturbation argument in place of
constrained minimisation to prove the following theorem, which establishes the existence of FDKP solitary waves `close' to \(\zeta_k^\star\) for all \(k\) for which (iii) holds.}

\begin{theorem} \label{Main result}
For each $k \in {\mathbb N}$ and each sufficiently small value of $\varepsilon>0$ the FDKP-I equation \eqref{steady FDKP} posesses a smooth fully localised solitary-wave solution
$u_k^\star$ of wave speed $c=1-\varepsilon^2$, which satisfies
$$u_k^\star(x,y)=u_k^\star(-x,y)=u_k^\star(x,-y)$$
 for all $(x,y) \in {\mathbb R}^2$ and
\begin{equation}
u_k^\star(x,y)=\varepsilon^2 \zeta_k^\star(\varepsilon x,\varepsilon^2 y)+ o(\varepsilon^2) \label{uniform KP approx}
\end{equation}
uniformly over $(x,y) \in {\mathbb R}^2$.
\end{theorem}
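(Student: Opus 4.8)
The plan is to exploit the long-wave scaling connecting the two equations and to recast the steady FDKP-I equation as a perturbation of the normalised KP-I equation \eqref{normalised steady KP}. Writing the wave speed as $c=1-\varepsilon^2$ and rearranging \eqref{steady FDKP} as $(m(\Diffb)-1)u+\varepsilon^2 u + u^2=0$, I would substitute the ansatz $u(x,y)=\varepsilon^2\zeta(\varepsilon x,\varepsilon^2 y)$ and pass to the rescaled Fourier variables $(\kappa_1,\kappa_2)=(k_1/\varepsilon,k_2/\varepsilon^2)$. A short computation reduces the problem to
\[
\FF(\zeta,\varepsilon):=m_\varepsilon(\Diffb)\zeta+\zeta^2=0,\qquad m_\varepsilon(\kappa):=\frac{m(\varepsilon\kappa_1,\varepsilon^2\kappa_2)-1+\varepsilon^2}{\varepsilon^2},
\]
and expanding the symbol by means of $(\tanh r/r)^{\frac12}=1-\tfrac16 r^2+\cdots$ and $(1+\beta r^2)^{\frac12}=1+\tfrac12\beta r^2+\cdots$ shows that $m_\varepsilon(\kappa)\to\tilde m(\kappa)$ as $\varepsilon\to0$, uniformly on bounded sets of $\kappa$. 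At $\varepsilon=0$ the reduced equation is therefore precisely \eqref{normalised steady KP}, whose nondegenerate symmetric solution is $\zeta_k^\star$; the task is to continue $\zeta_k^\star$ to a branch $\zeta_\varepsilon$ of solutions of $\FF(\cdot,\varepsilon)=0$ for small $\varepsilon>0$.

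I would set this up in anisotropic Sobolev spaces adapted to the KP operator $\tilde m(\Diffb)$, with domain space $X$ and target space $Y$, in which the singular factor $\Diff_2^2/\Diff_1^2$ is tamed by imposing the appropriate weight/vanishing at $k_1=0$ (so that admissible $\zeta$ lie in the range of $\partial_x$), and I would restrict throughout to the closed subspace of functions even in $x$ and in $y$. Both $m(\Diffb)$ and $\tilde m(\Diffb)$, being functions of $|\Diffb|$ and $\Diff_2^2/\Diff_1^2$, preserve this symmetric subspace, as does $\zeta\mapsto\zeta^2$. On it, statement (iii) of Lemma \ref{LWY results} says exactly that the model linearisation $D_\zeta\FF(\zeta_k^\star,0)=\tilde m(\Diffb)+2\zeta_k^\star$ has trivial kernel; since $\tilde m(\Diffb)\colon X\to Y$ is an isomorphism by the choice of spaces and the multiplication by $2\zeta_k^\star$ is a relatively compact perturbation, the model linearisation is Fredholm of index zero and hence, by injectivity, an isomorphism onto $Y$. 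The space $X$ would be chosen to embed continuously in $C_0(\mathbb{R}^2)$, which delivers the concluding uniform estimate.

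The crux, and the reason a \emph{suitable variant} of the implicit-function theorem is needed, is that $m_\varepsilon$ does not converge to $\tilde m$ uniformly over all of frequency space: at high frequencies $m(\bfk)$ grows only like $|\bfk|^{\frac12}$, whereas $\tilde m$ grows like $\kappa_1^2$, so $m_\varepsilon(\Diffb)-\tilde m(\Diffb)$ is not small in the operator norm of any single space adapted to $\tilde m$, and the naive theorem does not apply. I would circumvent this in two steps. First, the residual is genuinely small: since $\tilde m(\Diffb)\zeta_k^\star+(\zeta_k^\star)^2=0$ one has $\FF(\zeta_k^\star,\varepsilon)=(m_\varepsilon(\Diffb)-\tilde m(\Diffb))\zeta_k^\star$, and because $\zeta_k^\star$ is smooth its Fourier transform decays rapidly, so the high-frequency growth mismatch is harmless and dominated convergence gives $\FF(\zeta_k^\star,\varepsilon)\to0$ in $Y$. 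Second — and this is where essentially all the analytical work lies — I would prove that the linearisations $D_\zeta\FF(\zeta,\varepsilon)=m_\varepsilon(\Diffb)+2\zeta$ are \emph{uniformly} invertible for $\zeta$ near $\zeta_k^\star$ and $\varepsilon$ small, by a frequency decomposition: on high frequencies the strong-surface-tension condition $\beta>\tfrac13$ forces $m\ge1$, hence $m_\varepsilon\ge1$ with a coercive gain, so $m_\varepsilon(\Diffb)$ dominates the bounded multiplication term and the operator is boundedly invertible there uniformly in $\varepsilon$; on the retained bounded frequencies $m_\varepsilon\to\tilde m$ uniformly and the operator is a small perturbation of the isomorphism $\tilde m(\Diffb)+2\zeta_k^\star$. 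A quantitative implicit-function (Newton--Kantorovich) argument then produces $\zeta_\varepsilon$ with $\|\zeta_\varepsilon-\zeta_k^\star\|_X\to0$ as $\varepsilon\to0$.

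Finally I would undo the scaling: $u_k^\star(x,y):=\varepsilon^2\zeta_\varepsilon(\varepsilon x,\varepsilon^2 y)$ solves \eqref{steady FDKP} with $c=1-\varepsilon^2$, is even in $x$ and in $y$, and --- since $m$ is smooth away from the origin and the nonlinearity is smooth --- is upgraded to a smooth, fully localised wave by elliptic regularity and bootstrapping from the equation. The approximation \eqref{uniform KP approx} then follows from $\|\zeta_\varepsilon-\zeta_k^\star\|_{L^\infty}\to0$ via the embedding $X\hookrightarrow C_0(\mathbb{R}^2)$, since
\[
\bigl|u_k^\star(x,y)-\varepsilon^2\zeta_k^\star(\varepsilon x,\varepsilon^2 y)\bigr|=\varepsilon^2\bigl|(\zeta_\varepsilon-\zeta_k^\star)(\varepsilon x,\varepsilon^2 y)\bigr|\le\varepsilon^2\|\zeta_\varepsilon-\zeta_k^\star\|_{L^\infty}=o(\varepsilon^2)
\]
uniformly in $(x,y)$. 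The expected main obstacle is the uniform invertibility of the linearisation across the growth mismatch described above; once that is secured, the low-frequency continuation and the back-scaling are comparatively routine.
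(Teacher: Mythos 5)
Your high-level strategy---the KP scaling, perturbing from the nondegenerate lump $\zeta_k^\star$ in the subspace of functions even in $x$ and $y$, Fredholm-plus-injectivity for the model linearisation via Lemma \ref{LWY results}(iii), and back-scaling to get \eqref{uniform KP approx}---coincides with the paper's, and your residual estimate, symmetry remarks and bootstrapping are fine. The genuine gap sits exactly at the step you flag as the crux: uniform invertibility of the linearisation $m_\varepsilon(\Diffb)+2\zeta$ between a \emph{fixed} pair of spaces $(X,Y)$ adapted to $\tilde m(\Diffb)$. Coercivity of $m_\varepsilon$ off the rescaled cone (indeed $m_\varepsilon\geq 1+c_0(\delta)\varepsilon^{-2}$ there) dominates the bounded multiplication operator and so controls the solution $v$ of $(m_\varepsilon(\Diffb)+2\zeta)v=f$ in $L^2$---but not in the $X$-norm. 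To put $v$ back into $X$ you would need $\tilde m(\bfk)\lesssim m_\varepsilon(\bfk)$ on the high-frequency region, and this fails because of the very dispersion mismatch you point out: for $\kappa_2=0$ and $|\kappa_1|\gg\varepsilon^{-1}$ one has $m_\varepsilon(\kappa)\sim\varepsilon^{-3/2}|\kappa_1|^{1/2}$ while $\tilde m(\kappa)\sim\kappa_1^2$, so $\tilde m/m_\varepsilon\sim(\varepsilon|\kappa_1|)^{3/2}\to\infty$. Hence for every $\varepsilon>0$ the operator $m_\varepsilon(\Diffb)\colon X\to Y$ has dense but proper range, $m_\varepsilon(\Diffb)+2\zeta$ is not surjective (if multiplication by $2\zeta$ is relatively compact, invertibility would force $m_\varepsilon(\Diffb)$ itself to be an isomorphism), and the hypothesis of any implicit-function/Newton--Kantorovich scheme fails. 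Moreover no clever fixed choice of weighted-$L^2$ pair $(X,Y)$ can repair this: requiring both $\tilde m(\Diffb)$ and $m_\varepsilon(\Diffb)$ to be isomorphisms $X\to Y$ with uniform bounds forces the two symbols to be pointwise comparable uniformly in $\varepsilon$, which is false outside the cone $|k_1|,|k_2/k_1|\leq\delta$.

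This is precisely why the paper does not scale the full equation and run a single fixed-space argument. It first eliminates the frequencies outside the cone \eqref{eq:C} \emph{nonlinearly}, via the contraction-mapping reduction of Section \ref{Reduction} (Theorem \ref{fixed-point}, Lemma \ref{u_2}), carried out in $\varepsilon$-dependent norms: the scaled norm $|\cdot|_\varepsilon$ on $X_1$ and the fixed Sobolev-type norm on $X_2$, where Proposition \ref{n is an iso} gives invertibility of $n(\Diffb)$ with no comparison to $\tilde m$ needed. Only then is the KP scaling applied; the reduced unknown has spectrum in $C_\varepsilon$, where Proposition \ref{Replace nonloc with diff} and Corollary \ref{Replacement} show that $\varepsilon^2(n_\varepsilon(\Diffb)+\varepsilon^2)^{-1}$ and $\tilde m(\Diffb)^{-1}$ \emph{are} uniformly comparable, and the resulting equation \eqref{Final reduced eqn} is extended to the fixed space $Y^{1+\theta}_\mathrm{e}$ by composing with $\chi_\varepsilon(\Diffb)$ before Theorem \ref{IFT} is applied. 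If you try to rescue your single-IFT scheme by measuring the high-frequency block in a weaker, $m_\varepsilon$-adapted norm and the cone block in the strong norm, you are led to a two-norm, block-by-block reduction---that is, back to the paper's proof, with the nonlinear high-frequency elimination playing the role of your linear Schur complement. A smaller correction: the reason a nonstandard implicit-function theorem is needed is not the frequency mismatch as such, but that the reduction's dependence on $\varepsilon$ is merely continuous (not differentiable) at $\varepsilon=0$, which is why Theorem \ref{IFT} assumes only continuity in the parameter.
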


Theorem \ref{Main result} {is proved in} Sections~\ref{Function spaces}--\ref{Existence} {below.}

\subsection{The method}

\begin{figure}[h]

\centering
\includegraphics[width=4.5cm]{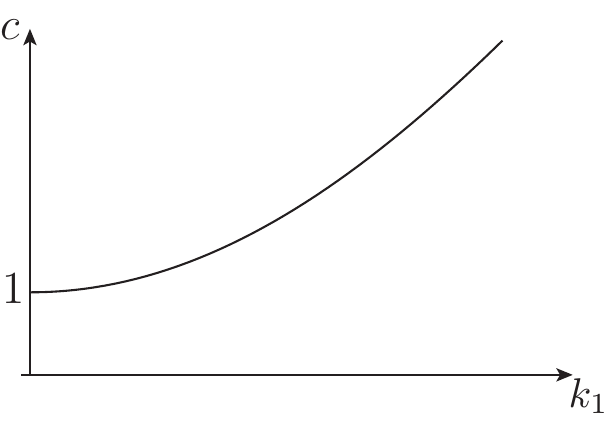}

\caption{FKDP-I dispersion relation for two-dimensional wave trains}
\label{dispersion relation}
\end{figure}

To motivate our method it is instructive to review the formal derivation of the steady KP equation
\eqref{normalised steady KP} from the steady FDKP equation \eqref{steady FDKP}.
We begin with the linear dispersion relation for {the time-dependent FDKP equation \eqref{FDKP} with $\beta>\frac{1}{3}$:} the speed $c$ and wave number $k_1$ of
a two-dimensional sinusoidal travelling wave train satisfy
\[
c= \left( 1 + \beta |k_1|^2 \right)^{\frac{1}{2}} \left( \frac{\tanh  |k_1|}{|k_1|} \right)^{\frac{1}{2}}.
\]
The function $k_1 \mapsto c(k_1)$, $k_1 \geq 0$ has a unique global minimum at $k_1=0$ with $c(0)=1$ (see Figure 
\ref{dispersion relation}).\linebreak
Bifurcations of nonlinear solitary waves are expected whenever the
linear group and phase speeds are equal, so that $c^\prime(k_1)=0$ (see \citet[\S 3]{DiasKharif99}), and
one therefore expects bifurcation of small-amplitude solitary waves from uniform flow with unit speed. 
Furthermore, observing
that $m$ is an analytic function of $k_1$ and $\frac{k_2}{k_1}$ (note that
$|\bfk|^2 = k_1^2 + \frac{k_2^2}{k_1^2}k_1^2$  for \(\bfk = (k_1,k_2)\)), one finds that
\begin{equation}
m(\bfk)=\tilde{m}(\bfk) + O(|(k_1,\tfrac{k_2}{k_1})|^4) \label{FDKP to KP}
\end{equation}
as $(k_1,\frac{k_2}{k_1}) \to 0$.
The variables \((k_1, \frac{k_2}{k_1})\) are {intrinsic to} the steady KP equation \eqref{normalised steady KP},
and corresponding to them is the scaling
\begin{equation}\label{eq:KP-scaling}
u(x,y) = \varepsilon^2\zeta(\varepsilon x, \varepsilon^2 y).
\end{equation}
{ Substituting the Ansatz \eqref{eq:KP-scaling} with assumed wave speed
$$c=1-\varepsilon^2$$
into the steady
FDKP equation \eqref{steady FDKP}, one finds that, to leading order, $\zeta$ also satisfies the normalised KP equation \eqref{normalised steady KP}.
We henceforth assume that $c=1-\varepsilon^2$ for $0 < \varepsilon < \varepsilon_0$, where
$\varepsilon_0$ is taken small enough for all our arguments to be valid.}

\begin{figure}[h]
\centering
\includegraphics[scale=0.67]{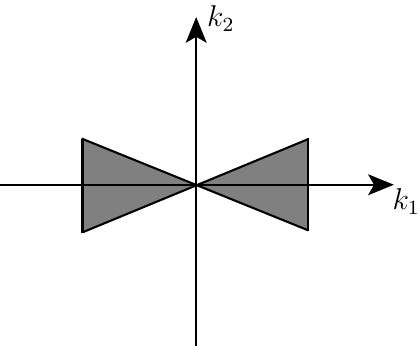}

\caption{The cone $C = \{ \bfk \in {\mathbb R}^2 \colon |k_1| \leq \delta, |\tfrac{k_2}{k_1}| \leq \delta\}$}
\label{bow tie}
\end{figure}\pagebreak

In the rigorous theory we seek solutions of \eqref{steady FDKP} in a suitable function
space $X$ and identify a corresponding phase space $Z$ for this equation. These spaces, {which are defined precisely in
Section \ref{Function spaces},} satisfy $X \subseteq Z \subseteq L^2({\mathbb R}^2)$.
The relationship \eqref{FDKP to KP} between the symbols \(m\) and \(\tilde m\) suggests that the spectrum of
a  solitary wave $u$ is concentrated in the region $|k_1|, |\frac{k_2}{k_1}| \ll 1$.
We therefore choose a fixed value of $\delta \in (0,1)$ and
decompose $L^2({\mathbb R}^2)$, and hence also $X$ and $Z$, into the direct sum
of subspaces of functions whose spectra are supported in theregion
\begin{equation}\label{eq:C}  
C=\left\{ \bfk \in {\mathbb R}^2 \colon |k_1| \leq \delta, \left|\frac{k_2}{k_1}\right| \leq \delta\right\}
\end{equation}
and its complement (see Figure \ref{bow tie}), so that
$$
X=\underbrace{\chi(\Diffb)X}_{\displaystyle = X_1} \oplus \underbrace{(1-\chi(\Diffb))X}_{\displaystyle = X_2}, \qquad
Z=\underbrace{\chi(\Diffb)Z}_{\displaystyle = Z_1} \oplus \underbrace{(1-\chi(\Diffb))Z}_{\displaystyle = Z_2},
$$
in which $\chi$ is the characteristic function of $C$. Observing that $X_1$, $Z_1$ both coincide with $\chi(\Diffb)L^2({\mathbb R}^2)$, we equip
$Z_1$ with the $L^2({\mathbb R}^2)$ norm and $X_1$ with the equivalent scaled norm
\begin{equation}\label{eq:epsilon KP norm}
|u_1|_\varepsilon^2 =  \int_{{\mathbb R}^2} \bigg(|u_1|^2 + \varepsilon^{-2} |\Diff_1 u_1|^2 + 
\varepsilon^{-2}\left|\frac{\Diff_2}{\Diff_1}u_1\right|^2\bigg)\dx\dy,
\end{equation}
and employ a method
akin to the Lyapunov--Schmidt reduction to
determine $u_2\in X_2$ as a function of $u_1\in X_1$. With $n(\Diffb)=m(\Diffb)-1$, the result is the equation
\[
\varepsilon^2 u_1 + n(\Diffb) u_1  {+} \chi(\Diffb)(u_1 + u_2(u_1))^2 =0,
\]
for \(u_1\) in the unit ball
\[
U=\{u_1 \in X_1 \colon  |u_1|_\varepsilon \leq 1\},
\] 
{ of $X_1$}.

Applying  the KP scaling
\[u_1(x,y) = \varepsilon^2\zeta(\varepsilon x, \varepsilon^2 y)\]
so that the spectrum of $\zeta$ lies in the set
\[
C_\varepsilon=\left\{ \bfk \in {\mathbb R}^2 \colon |k_1| \leq \frac{\delta}{\varepsilon}, \left|\frac{k_2}{k_1}\right|\leq \frac{\delta}{\varepsilon}\right\},
\]
one obtains the reduced equation
\begin{equation}
\varepsilon^{-2}n_\varepsilon(\Diffb)\zeta+\zeta
+ \chi_\varepsilon(\Diffb)\zeta^2 + S_\varepsilon(\zeta)=0,
\label{Final reduced eqn - Intro}
\end{equation}
where
\[
n_\varepsilon(\bfk) = n(\varepsilon k_1, \varepsilon^2 k_2), \qquad \chi_\varepsilon(\bfk) = \chi(\varepsilon k_1, \varepsilon^2 k_2).
\]
The remainder term $S_\varepsilon \colon B_M(0) \subseteq \chi_\varepsilon(\Diffb) Y^1 \rightarrow \chi_\varepsilon(\Diffb)L^2({\mathbb R}^2)$
satisfies the estimates
\[
|S_\varepsilon(\zeta)|_{L^2} \lesssim \varepsilon^2| \zeta |_{Y^1}^3, \quad
|\diff S_\varepsilon[\zeta]|_{\LL(Y^1,L^2({\mathbb R}^2))}\lesssim \varepsilon^2| \zeta |_{Y^1}^2
\]
(see Section \ref{Reduction}), where\pagebreak
\[Y^1 = \{u \in L^2({\mathbb R}^2)\colon |u|_{Y^1} < \infty\}, \qquad
|u|_{Y^1}^2 =   \int_{{\mathbb R}^2} \bigg(|u|^2 + |\Diff_1 u|^2 + 
\left|\frac{\Diff_2}{\Diff_1}u\right|^2\bigg)\dx\dy\]
is the natural energy space for the KP-I equation \citep{deBouardSaut97b};
the constant $M>1$ is chosen large enough so that $\zeta_k^\star \in B_M(0)$, while the requirement that $B_M(0)$
is contained in the range of the isomorphism $u_1 \mapsto \zeta$ requires $\varepsilon \leq M^{-2}$.
In the formal limit $\varepsilon \to 0$ the subspace $\chi_\varepsilon(\Diffb)Y^1$ `fills out' all of $Y^1$ and equation \eqref{Final reduced eqn - Intro} reduces to the
KP equation \eqref{normalised steady KP}.

We demonstrate in Theorem \ref{Final strong existence thm} that equation \eqref{Final reduced eqn - Intro} has solutions $\zeta_k^\varepsilon$
which satisfy $\zeta_k^\varepsilon \rightarrow \zeta_k^\star$ as $\varepsilon \rightarrow 0$
in a suitable subspace of $Y^1$, and deduce {our} main Theorem~\ref{Main result} by tracing back
the steps in the reduction procedure. One of the key arguments is based upon the nondegeneracy result given in Lemma \ref{LWY results}(iii),
which allows one to apply a variant of the implicit-function theorem. For this purpose we
exploit the fact that the reduction procedure preserves the invariance
of equation \eqref{steady FDKP} under $u(x,y) \mapsto u(-x,y)$ and $u(x,y) \mapsto u(x,-y)$, so that
equation \eqref{Final reduced eqn - Intro} is invariant under {$\zeta(x,y) \mapsto \zeta(-x,y)$ and $\zeta(x,y) \mapsto \zeta(x,-y)$}. 
{ It is necessary to use a low regularity version of the implicit-function theorem since the reduction in Section \ref{Reduction} is performed using the \(\varepsilon\)-dependent norm \(|\cdot|_\varepsilon\) and thus does not yield information concerning the smoothness of
$u_1$ as a function of $\varepsilon$.}

{ \citet{EhrnstroemGroves18} use a variational version of the reduction procedure outlined above to reduce a variational
principle for equation \eqref{steady FDKP} to a variational principle for \eqref{Final reduced eqn - Intro} and proceed by finding
critical points of the reduced variational functional by the direct methods of the calculus of variations. Here, with some amendments, we use their functional-analytic
setting and follow the steps in their reduction (see Sections~\ref{Function spaces} and \ref{Reduction} below), 
but study the reduced equation \eqref{Final reduced eqn - Intro} in Section \ref{Existence} in an entirely different manner,
arriving at a much more comprehensive conclusion.

Perturbation arguments to construct localised solutions
approximated by nondegenerate KP lump solutions have recently also been used for the Gross-Pitaevskii equation by \citet{LiuWangWeiYang26}
(see also \citet{ChironScheid18} for a numerical approach) and for the steady water-wave problem with strong surface tension
by \citet{GuiLaiLiuWeiYang25a} and \citet{GrovesWahlen25}, who included vorticity effects. The method has additionally been applied to
physical problems approximated by other model equations, in particular to the Whitham equation by \citet{StefanovWright20} (perturbation of Korteweg--de Vries solitary waves), to the gravity-capillary steady water-wave problem by \citet{Groves21} (perturbation of Korteweg--de Vries  and nonlinear Schr\"{o}dinger solitary waves) and \citet{BuffoniGrovesWahlen22} (perturbation of two-dimensional Schr\"{o}dinger solitary waves).

\section{Function spaces} \label{Function spaces}

In this section we introduce the Banach spaces used in our theory and state their main properties; the
proofs of most of these results are given by \citet[\S2]{EhrnstroemGroves18}. We use the familiar scale \(\{H^r({\mathbb R}^2), |\cdot|_{H^r}\}_{r \geq 0}\) of Sobolev spaces together with the anisotropic spaces
\begin{align*}
X&=\{u \in L^2({\mathbb R}^2)\colon |u|_X < \infty\}, \qquad
|u|_{X}^2 = \int_{{\mathbb R}^2} \bigg( 1 + \frac{k_2^2}{k_1^2} + \frac{k_2^4}{k_1^2} + |\bfk|^{2s}\bigg) |\hat u(\bfk)|^2\dk, \\
Z&=\{u \in L^2({\mathbb R}^2)\colon |u|_Z < \infty\}, \qquad
|u|_{Z}^2 = \int_{{\mathbb R}^2} \left( 1 + |\bfk| + k_1^2|\bfk|^{2s-3}\right) |\hat u(\bfk)|^2\dk,
\end{align*}
in which {the Sobolev index $s > \frac{3}{2}$} is fixed and $u \mapsto \hat{u}$ denotes the unitary Fourier transform on $L^2({\mathbb R}^2)$. We also use the scale $\{Y^r,|\cdot|_{Y^r}\}_{r \geq 0}$, where
\begin{equation}
Y^r = \{u \in L^2({\mathbb R}^2)\colon |u|_{Y^r} < \infty\}, \qquad
|u|_{Y^r}^2 =   \int_{{\mathbb R}^2} \bigg(1+k_1^2+\frac{k_2^2}{k_1^2}\bigg)^{\!r} |\hat u(\bfk)|^2\dk. \label{Defn of Y}
\end{equation}
Note that $Y^0=L^2({\mathbb R}^2)$ while $Y^1$ is the natural energy space for the KP-I equation
\citep{deBouardSaut97b}. \citet{EhrnstroemGroves18} use only the space $Y^1$, there called $\tilde{Y}$, but the proof of the following proposition is a straightforward variant of {the proof of} Lemma 2.1(i) in that reference.


\begin{proposition} \label{Basic embeddings}
One has the continuous embeddings
\[
Y^r \hookrightarrow L^2({\mathbb R}^2), \quad H^{s-\frac{1}{2}}({\mathbb R}^2) \hookrightarrow Z \hookrightarrow L^2({\mathbb R}^2), \quad X \hookrightarrow H^s({\mathbb R}^2)
\]
for all $ r \geq 0$, and in particular $X \hookrightarrow C_\mathrm{b}({\mathbb R}^2)$, the space of bounded, continuous functions on ${\mathbb R}^2$.
\end{proposition}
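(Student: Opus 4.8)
The plan is to observe that every space appearing in the statement is a weighted $L^2$-space defined through a Fourier multiplier, so that by Plancherel's theorem a continuous embedding $A \hookrightarrow B$ between two such spaces is equivalent to the pointwise estimate $w_B(\bfk) \lesssim w_A(\bfk)$ for almost every $\bfk$, where $w_A$ and $w_B$ denote the respective weights appearing under the integral sign in the squared norms. Each embedding then reduces to an elementary comparison of symbols, and the hypothesis $s > \frac{3}{2}$ enters only through the embedding $H^{s-\frac12}({\mathbb R}^2) \hookrightarrow Z$ and the concluding Sobolev embedding.

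First I would dispose of the two embeddings into $L^2({\mathbb R}^2)$: the weights $(1+k_1^2+\frac{k_2^2}{k_1^2})^r$ and $1+|\bfk|+k_1^2|\bfk|^{2s-3}$ are both bounded below by $1$ (for $Y^r$ this uses $r \geq 0$, so that the base, which is at least $1$, is raised to a nonnegative power), whence $|u|_{L^2} \leq |u|_{Y^r}$ and $|u|_{L^2} \leq |u|_Z$ at once. For $X \hookrightarrow H^s({\mathbb R}^2)$ I would discard the nonnegative anisotropic terms $\frac{k_2^2}{k_1^2}$ and $\frac{k_2^4}{k_1^2}$ from the $X$-weight and invoke $(1+|\bfk|^2)^s \lesssim 1+|\bfk|^{2s}$ to see that the $H^s$-weight is controlled by the $X$-weight. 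Combining this with the classical Sobolev embedding $H^s({\mathbb R}^2) \hookrightarrow C_\mathrm{b}({\mathbb R}^2)$, valid because $s > 1$ is larger than half the spatial dimension, yields the final assertion $X \hookrightarrow C_\mathrm{b}({\mathbb R}^2)$.

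The one genuinely nontrivial step is the symbol estimate $1 + |\bfk| + k_1^2|\bfk|^{2s-3} \lesssim (1+|\bfk|^2)^{s-\frac12}$ underlying $H^{s-\frac12}({\mathbb R}^2) \hookrightarrow Z$, which requires controlling the term $k_1^2|\bfk|^{2s-3}$ both near the origin and at infinity. Near $\bfk = 0$ this term tends to zero because $2s-3 > 0$ (here the hypothesis $s > \frac{3}{2}$ is essential), so the left-hand side stays bounded while the right-hand side exceeds $1$. For large $|\bfk|$ one estimates $k_1^2|\bfk|^{2s-3} \leq |\bfk|^{2}|\bfk|^{2s-3} = |\bfk|^{2s-1}$ and $|\bfk| \leq |\bfk|^{2s-1}$ (using $2s-1 > 1$), so that the entire left-hand side is dominated by a constant multiple of $|\bfk|^{2s-1} \sim (1+|\bfk|^2)^{s-\frac12}$. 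I expect this estimate, together with confirming that the singular anisotropic weights may legitimately be dropped in the embeddings where they occur only on the dominating side, to constitute the main — albeit still routine — obstacle; everything else follows immediately from the structure of the weights.
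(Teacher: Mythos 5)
Your proof is correct: all four embeddings do reduce, via Plancherel, to the pointwise symbol comparisons you give, and each comparison is verified accurately (including the key estimate $1+|\bfk|+k_1^2|\bfk|^{2s-3} \lesssim (1+|\bfk|^2)^{s-\frac12}$ using $k_1^2 \leq |\bfk|^2$ and $s>\tfrac32$). This is essentially the same elementary weight-comparison argument as in the proof the paper relies on (Lemma 2.1(i) of Ehrnstr\"om and Groves, 2018, to which the paper defers), so there is nothing further to add.
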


\begin{proposition} \label{Y embeddings}
The space $Y^1$ (and hence $Y^r$ for each $r \geq 1$) is
\begin{list}{(\roman{count})}{\usecounter{count}}
\item
continuously embedded in $L^p({\mathbb R}^2)$ for $2 \leq p \leq 6$,
\item
compactly embedded in $L^p_\mathrm{loc}({\mathbb R}^2)$ for $2 \leq p < 6$.
\end{list}
\end{proposition}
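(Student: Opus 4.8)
The plan is to deduce both parts from the single case $r=1$. Since the weight $1+k_1^2+k_2^2/k_1^2$ is $\geq 1$, raising it to a power $r\geq 1$ only increases it, so $|u|_{Y^1}\leq|u|_{Y^r}$ and hence $Y^r\hookrightarrow Y^1$ continuously; it therefore suffices to treat $Y^1$. For part (i) I would isolate the two endpoints: the case $p=2$ is immediate from the definition \eqref{Defn of Y}, while $p=6$ is the genuine (critical) assertion, and the intermediate exponents $2<p<6$ then follow by H\"older interpolation $|u|_{L^p}\leq|u|_{L^2}^{1-\theta}|u|_{L^6}^{\theta}$ with $\tfrac1p=\tfrac{1-\theta}{2}+\tfrac{\theta}{6}$. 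For part (ii) I would likewise reduce to compactness in $L^2_{\mathrm{loc}}$: once a bounded sequence in $Y^1$ is shown to have a subsequence converging in $L^2(\Omega)$ on each bounded $\Omega$, the uniform bound in $L^6(\Omega)$ furnished by (i) upgrades this, again by interpolation, to convergence in $L^p(\Omega)$ for every $2\leq p<6$.

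For the critical embedding $Y^1\hookrightarrow L^6({\mathbb R}^2)$ the key observation is the elementary bound $k_1^2+\tfrac{k_2^2}{k_1^2}\geq 2|k_2|$, which yields
\[
\int_{{\mathbb R}^2}\big(1+k_1^2+|k_2|\big)|\hat u(\bfk)|^2\dk\lesssim|u|_{Y^1}^2.
\]
Thus $Y^1$ embeds into the anisotropic space carrying one $x$-derivative and half a $y$-derivative in $L^2$, and it remains to prove the anisotropic Sobolev embedding $H^{1,\frac12}\hookrightarrow L^6$. I would establish this either directly, via the Gagliardo--Nirenberg inequality $|u|_{L^6}^3\lesssim|\partial_x u|_{L^2}^2\,|\partial_x^{-1}\partial_y u|_{L^2}$ of \citet{deBouardSaut97b}, or by an anisotropic Littlewood--Paley argument adapted to the parabolic scaling $k_2\sim k_1^2$: on a frequency block $\{|k_1|\sim\lambda,\ |k_2|\lesssim\lambda^2\}$, which has measure $\sim\lambda^3$, Bernstein gives $|u_\lambda|_{L^6}\lesssim\lambda\,|u_\lambda|_{L^2}\sim|(1+k_1^2+|k_2|)^{1/2}u_\lambda|_{L^2}$, and summing over dyadic scales (using $6\geq2$) produces the claimed estimate.

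For the local compactness in part (ii) I would invoke the Riesz--Fr\'echet--Kolmogorov criterion. Boundedness in $L^2$ is clear, and the essential point is equicontinuity of translations, for which the displayed bound does the work: in the $x$-direction $|\e^{\i h_1 k_1}-1|^2\leq h_1^2k_1^2$ and in the $y$-direction $|\e^{\i h_2 k_2}-1|^2\leq 2|h_2|\,|k_2|$, whence
\[
|u(\cdot+h_1,\cdot)-u|_{L^2}^2+|u(\cdot,\cdot+h_2)-u|_{L^2}^2\lesssim\big(h_1^2+|h_2|\big)\,|u|_{Y^1}^2
\]
uniformly over the bounded sequence. After multiplying by a fixed cut-off supported near $\Omega$ (to secure tightness) this gives precompactness in $L^2(\Omega)$, and the interpolation step described above completes the proof.

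The main obstacle is the endpoint $p=6$. Because the control in the $y$-variable comes only through the nonlocal operator $\partial_x^{-1}\partial_y$, no isotropic Sobolev embedding is available and one is forced into the anisotropic estimate above; moreover $p=6$ is exactly the scaling-critical exponent for $Y^1$ in two dimensions, so there is no slack in the inequality. This criticality is also the reason part (ii) must stop short of $p=6$: at the endpoint the embedding fails to be compact, concentration along the parabolic scaling being the obstruction.
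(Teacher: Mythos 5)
Your proposal is correct, and its core coincides with what actually underlies the paper's treatment: the paper gives no proof of Proposition \ref{Y embeddings} at all, deferring to \citet[\S 2]{EhrnstroemGroves18}, whose argument in turn rests on precisely the anisotropic Gagliardo--Nirenberg inequality of \citet{deBouardSaut97b} that you invoke. Your chain of reductions --- $Y^r\hookrightarrow Y^1$ for $r\geq 1$ since the weight exceeds $1$; the bound $k_1^2+k_2^2/k_1^2\geq 2|k_2|$, which trades the nonlocal term for half a $y$-derivative; the critical embedding at $p=6$ (your form $|u|_{L^6}^3\lesssim|\partial_x u|_{L^2}^2\,|\partial_x^{-1}\partial_y u|_{L^2}$ is dimensionally exact for the parabolic scaling and is the de Bouard--Saut inequality at the endpoint); H\"{o}lder interpolation for $2<p<6$; and Kolmogorov--Riesz for part (ii), with translation equicontinuity read off on the Fourier side from $|\e^{\i h_1k_1}-1|^2\leq h_1^2k_1^2$ and $|\e^{\i h_2k_2}-1|^2\leq 2|h_2|\,|k_2|$, tightness secured by a cutoff, and the final upgrade to $L^p(\Omega)$, $p<6$, by interpolation against the uniform $L^6$ bound --- is a complete and correct proof, and has the merit of making explicit the details the paper outsources to the references.

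One caveat concerns your Littlewood--Paley alternative for the endpoint $p=6$: as sketched it is not complete. The blocks $\{|k_1|\sim\lambda,\ |k_2|\lesssim\lambda^2\}$ do not cover the frequency plane; the region $|k_2|\gg\max(1,k_1^2)$, where the controlling weight is $k_2^2/k_1^2\gg|k_2|$, is missed, so one needs a genuinely two-parameter anisotropic dyadic decomposition. In addition, summing the blockwise bounds $|u_\lambda|_{L^6}\lesssim\lambda|u_\lambda|_{L^2}$ by the plain triangle inequality produces an $\ell^1$ sum that the $\ell^2$-type $Y^1$ norm does not control; one must pass through the square-function characterisation and Minkowski's inequality (this is presumably what your parenthetical ``using $6\geq 2$'' intends, but it deserves to be said). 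Since this route is offered only as an alternative to the citation of \citet{deBouardSaut97b}, it does not affect the correctness of your proof, but it should not be presented as a self-contained second argument in its current form.
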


\begin{proposition} \label{Continuity of Y}
The space $Y^r$ is continuously embedded in $C_\mathrm{b}({\mathbb R}^2)$ for each $r> \frac{3}{2}$.
\end{proposition}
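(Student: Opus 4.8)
The plan is to show that every $u \in Y^r$ has $\hat u \in L^1({\mathbb R}^2)$, with $|\hat u|_{L^1} \lesssim |u|_{Y^r}$; the embedding into $C_\mathrm{b}({\mathbb R}^2)$ then follows from the Fourier inversion formula together with the standard fact that the inverse Fourier transform of an $L^1$ function is bounded and continuous, with $|u|_\infty \lesssim |\hat u|_{L^1}$. The entire statement thus reduces to a single integrability estimate on the Fourier side.

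To obtain the $L^1$ bound on $\hat u$, I would write $w(\bfk) = \big(1 + k_1^2 + k_2^2/k_1^2\big)^r$, so that $|u|_{Y^r}^2 = \int_{{\mathbb R}^2} w\,|\hat u|^2 \dk$, and apply the Cauchy--Schwarz inequality in the form
\[
\int_{{\mathbb R}^2} |\hat u| \dk = \int_{{\mathbb R}^2} w^{-1/2}\, w^{1/2} |\hat u| \dk \le \bigg(\int_{{\mathbb R}^2} w^{-1} \dk\bigg)^{\!1/2} |u|_{Y^r}.
\]
It therefore suffices to verify that $\int_{{\mathbb R}^2} w^{-1}\dk < \infty$, and one expects this to hold precisely when $r > \tfrac{3}{2}$, matching the stated range.

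This weight integral is the heart of the matter, and the main subtlety is the anisotropy: the singular factor $k_2^2/k_1^2$ at $k_1 = 0$ is what makes a naive polar-coordinate estimate inapplicable. I would instead use the change of variables $k_2 = |k_1| v$ adapted to the structure of the symbol, so that for fixed $k_1 \neq 0$
\[
\int_{{\mathbb R}} \bigg(1 + k_1^2 + \frac{k_2^2}{k_1^2}\bigg)^{\!-r} \diff k_2 = |k_1| \int_{{\mathbb R}} (1 + k_1^2 + v^2)^{-r}\, \diff v = C_r\, |k_1|\, (1+k_1^2)^{\frac{1}{2}-r},
\]
where $C_r = \int_{{\mathbb R}} (1+v^2)^{-r}\, \diff v$ is finite because $r > \tfrac12$. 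Integrating in $k_1$ and substituting $w = 1 + k_1^2$ reduces the remaining one-dimensional integral to a multiple of $\int_1^\infty w^{\frac12 - r}\, \diff w$, which converges if and only if $\tfrac12 - r < -1$, that is, $r > \tfrac32$. This simultaneously confirms the finiteness of the weight integral and shows that the stated range of $r$ is sharp, completing the argument.
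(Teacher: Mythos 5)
Your proof is correct and follows essentially the same route as the paper: bound $|u|_\infty$ by $|\hat u|_{L^1}$, apply Cauchy--Schwarz against the weight $\big(1+k_1^2+k_2^2/k_1^2\big)^r$, and evaluate the weight integral via the substitution $k_2 = |k_1|v$ (the paper's unnamed ``change of variables''), which is exactly where the threshold $r>\tfrac{3}{2}$ emerges. The only cosmetic differences are that you carry out the resulting one-dimensional integral explicitly (the paper merely asserts its finiteness) and invoke the standard continuity of inverse Fourier transforms of $L^1$ functions where the paper sketches the underlying dominated convergence argument.
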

\begin{proof}
Note that
\[
|u|_\infty \lesssim \int_{{\mathbb R}^2} |\hat{u}(\bfk)|\dk \\
= \int_{{\mathbb R}^2} \bigg(1+k_1^2+\frac{k_2^2}{k_1^2}\bigg)^{\!-\frac{1}{2}r} \bigg(1+k_1^2+\frac{k_2^2}{k_1^2}\bigg)^{\!\frac{1}{2}r} |\hat{u}(\bfk)|\dk
\leq |u|_{Y^r} I^\frac{1}{2},
\]
where, with a change of variables,
\[
I=\int_{{\mathbb R}^2} \bigg(1+k_1^2+\frac{k_2^2}{k_1^2}\bigg)^{\!-r}\dk = \int_{{\mathbb R}^2} (1+|\bfk|^2)^{-r} |k_1|\dk < \infty.
\]
The continuity of $u$ follows from a standard dominated convergence argument with $\hat{u}$ as dominating function.
\end{proof}

\begin{proposition} $ $ \label{X to Z mappings}
\begin{list}{(\roman{count})}{\usecounter{count}}
\item
The Fourier multiplier $m(\Diffb)$ maps $X$ continuously onto $Z$.
\item
The formula $u \mapsto u^2$ maps $X$ smoothly into $Z$.
\end{list}
\end{proposition}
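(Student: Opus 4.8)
The plan is to reduce both parts to estimates on Fourier symbols, using that $X$ and $Z$ are weighted $L^2$-spaces and that $m(\Diffb)$ acts on the Fourier side by multiplication by the symbol
\[ m(\bfk)^2 = (1+\beta|\bfk|^2)\frac{\tanh|\bfk|}{|\bfk|}\Big(1+\frac{2k_2^2}{k_1^2}\Big). \]
Writing $w_X(\bfk)=1+\tfrac{k_2^2}{k_1^2}+\tfrac{k_2^4}{k_1^2}+|\bfk|^{2s}$ and $w_Z(\bfk)=1+|\bfk|+k_1^2|\bfk|^{2s-3}$ for the weights defining $|\cdot|_X$ and $|\cdot|_Z$, Plancherel's theorem shows that part (i) is equivalent to the two-sided pointwise bound
\[ m(\bfk)^2\, w_Z(\bfk)\asymp w_X(\bfk). \]
The bound $m^2 w_Z\lesssim w_X$ yields $|m(\Diffb)u|_Z\lesssim|u|_X$, hence continuity; the reverse bound $w_X\lesssim m^2 w_Z$ shows that, for any $v\in Z$, the function $u$ defined by $\hat u=\hat v/m$ lies in $X$ with $|u|_X\lesssim|v|_Z$ and satisfies $m(\Diffb)u=v$, giving surjectivity. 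Here $\hat v/m$ is a legitimate $L^2$-function because $m$ is continuous and bounded below by a positive constant (indeed $m^2\gtrsim1$).

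To establish the symbol equivalence I would first use the elementary estimate $\tfrac{\tanh t}{t}\asymp(1+t)^{-1}$ on $[0,\infty)$ to obtain
\[ m(\bfk)^2\asymp(1+|\bfk|)\Big(1+\frac{k_2^2}{k_1^2}\Big). \]
Since $1+\tfrac{k_2^2}{k_1^2}=\tfrac{|\bfk|^2}{k_1^2}$, multiplying by $w_Z$ and using $(1+|\bfk|)^2\asymp1+|\bfk|^2$ gives
\[ m(\bfk)^2\, w_Z(\bfk)\asymp\frac{|\bfk|^2+|\bfk|^4}{k_1^2}+|\bfk|^{2s-1}+|\bfk|^{2s}. \]
The algebraic identities $\tfrac{|\bfk|^2}{k_1^2}=1+\tfrac{k_2^2}{k_1^2}$ and $\tfrac{|\bfk|^4}{k_1^2}=k_1^2+2k_2^2+\tfrac{k_2^4}{k_1^2}$ then recover the terms $1+\tfrac{k_2^2}{k_1^2}+\tfrac{k_2^4}{k_1^2}$ of $w_X$ together with a spurious $|\bfk|^2$, while $|\bfk|^{2s}$ matches the remaining term of $w_X$. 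The only point requiring care --- and the main (if modest) obstacle in the argument --- is that the extra terms $|\bfk|^2$ and $|\bfk|^{2s-1}$ must be absorbed into $w_X$; this is where the hypothesis $s>\tfrac32$ enters, since both are dominated by $1+|\bfk|^{2s}$ for such $s$. One thereby concludes $m^2 w_Z\asymp w_X$.

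For part (ii) I would note that $u\mapsto u^2$ is a continuous quadratic map, whose Fr\'{e}chet derivatives are $\diff(u^2)[h]=2uh$, $\diff^2(u^2)[h,\ell]=2h\ell$ and $\diff^j(u^2)=0$ for $j\ge3$; it is therefore smooth as soon as the symmetric bilinear map $(u,v)\mapsto uv$ is bounded from $X\times X$ into $Z$. This boundedness follows at once from the embedding $X\hookrightarrow H^s({\mathbb R}^2)$ of Proposition~\ref{Basic embeddings}, the fact that $H^s({\mathbb R}^2)$ is a Banach algebra for $s>1$, and the chain $H^s({\mathbb R}^2)\hookrightarrow H^{s-\frac{1}{2}}({\mathbb R}^2)\hookrightarrow Z$ (the last embedding also from Proposition~\ref{Basic embeddings}): indeed $|uv|_Z\lesssim|uv|_{H^{s-\frac{1}{2}}}\lesssim|uv|_{H^s}\lesssim|u|_{H^s}|v|_{H^s}\lesssim|u|_X|v|_X$. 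Taking $v=u$ gives $|u^2|_Z\lesssim|u|_X^2$ and completes the proof.
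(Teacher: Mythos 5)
Your proof is correct and takes essentially the same approach as the paper's, which defers this proposition to \S 2 of Ehrnstr\"{o}m \& Groves (2018): there part (i) is likewise reduced via Plancherel to the two-sided symbol estimate $m(\bfk)^2\asymp(1+|\bfk|)\bigl(1+\tfrac{k_2^2}{k_1^2}\bigr)$ together with the weight comparison against $w_X$ and $w_Z$, and part (ii) is obtained from $X\hookrightarrow H^s({\mathbb R}^2)$, the algebra property of $H^s({\mathbb R}^2)$, and $H^{s-\frac{1}{2}}({\mathbb R}^2)\hookrightarrow Z$. The only minor remark is that absorbing the spurious terms $|\bfk|^2+|\bfk|^{2s-1}$ into $1+|\bfk|^{2s}$ needs only $s\geq 1$, so this step is not really where the hypothesis $s>\tfrac{3}{2}$ is required.
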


We decompose $u \in L^2({\mathbb R}^2)$ into the sum of functions $u_1$ and $u_2$ whose spectra are supported in the region \(C\) {defined in} \eqref{eq:C} and its complement (see Figure \ref{bow tie}) by writing
\[
u_1 = \chi(\Diffb)u, \qquad u_2=(1-\chi(\Diffb))u,
\]
where $\chi$ is the characteristic function of $C$. Since they are subspaces of $L^2({\mathbb R}^2)$, the Fourier multiplier $\chi(\Diffb)$ induces the orthogonal decomposition \(X = X_1 \oplus X_2\)
with \(X_1{ = } \chi(\Diffb) X\), \(X_2 {= } (1 - \chi(\Diffb)) X\) and analogous decompositions of the spaces $Y^r$ and $Z$. We write $Z=Z_1 \oplus Z_2$, but retain the explicit notation $\chi(\Diffb)Y^r$ and $\chi(\Diffb)L^2({\mathbb R}^2)$. The spaces $X_1$, $Z_1$ and $\chi(\Diffb)Y^r$ all coincide with $\chi(\Diffb)L^2({\mathbb R}^2)$, and
$|\cdot|_{L^2}$, $|\cdot|_{X}$, $|\cdot|_{Y^r}$ and $|\cdot|_{Z}$ are all equivalent
norms for these spaces. We do however make specific choices in the theory in below; we equip
$Z_1$ and $\chi(\Diffb)Y^r$ with $|\cdot|_{L^2}$ and $|\cdot|_{Y^r}$ respectively, and $X_1$ with the
equivalent scaled norm
$$
|u_1|_\varepsilon^2 = \int_{{\mathbb R}^2} \bigg( 1 + \varepsilon^{-2} k_1^2 + \varepsilon^{-2} \frac{k_2^2}{k_1^2} \bigg) |\hat u_1(\bfk)|^2\dk
$$
(see equation \eqref{eq:epsilon KP norm}) in anticipation of the KP scaling $(k_1, k_2) \mapsto (\varepsilon k_1, \varepsilon^2 k_2)$.\pagebreak

\begin{proposition} \label{n is an iso}
The mapping $n(\Diffb)=m(\Diffb)-1$ is an isomorphism $X_2 \to Z_2$.
\end{proposition}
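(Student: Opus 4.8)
The plan is to reduce everything to a pointwise two-sided bound on the Fourier symbol, since $n(\Diffb)$, $\chi(\Diffb)$ and the weights defining the norms are all Fourier multipliers. Because $n(\Diffb)$ commutes with $\chi(\Diffb)$ it automatically maps $X_2=(1-\chi(\Diffb))X$ into $(1-\chi(\Diffb))Z=Z_2$. Writing the norms as $|u|_X^2=\int_{{\mathbb R}^2} W_X(\bfk)|\hat u(\bfk)|^2\dk$ and $|u|_Z^2=\int_{{\mathbb R}^2} W_Z(\bfk)|\hat u(\bfk)|^2\dk$ with
\[
W_X(\bfk)=1+\frac{k_2^2}{k_1^2}+\frac{k_2^4}{k_1^2}+|\bfk|^{2s},\qquad W_Z(\bfk)=1+|\bfk|+k_1^2|\bfk|^{2s-3},
\]
the operator $n(\Diffb)\colon X_2\to Z_2$ is an isomorphism precisely when $n$ is bounded away from zero on ${\mathbb R}^2\setminus C$ (so that $1/n(\Diffb)$ makes sense on $Z_2$) and $n(\bfk)^2 W_Z(\bfk)$ is comparable to $W_X(\bfk)$ there. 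The upper estimate $n^2 W_Z\lesssim W_X$ is essentially already available: Proposition \ref{X to Z mappings}(i) furnishes $m^2 W_Z\lesssim W_X$, and since $2s>3$ gives $|\bfk|+k_1^2|\bfk|^{2s-3}\lesssim 1+|\bfk|^{2s}$ and hence $W_Z\lesssim W_X$, one gets $n^2 W_Z=(m-1)^2 W_Z\lesssim W_X$ on all of ${\mathbb R}^2$. The real content is therefore the positivity of $n$ and the matching lower bound $W_X\lesssim n^2 W_Z$ off the cone.

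The crux, and the step I expect to be the main obstacle, is the uniform positivity of $n$ on ${\mathbb R}^2\setminus C$; this is exactly where the bifurcation structure of the dispersion relation enters. Write $m(\bfk)=c(|\bfk|)\,(1+2k_2^2/k_1^2)^{1/2}$ with $c(r)=(1+\beta r^2)^{1/2}(\tanh r/r)^{1/2}$. The introduction records that $c$ has its unique global minimum $c(0)=1$, so $c\ge 1$, and the second factor is $\ge 1$; hence $m\ge 1$ everywhere (in particular $n(\Diffb)$ is injective). Off the cone either $|k_2/k_1|>\delta$, whence the second factor is $\ge(1+2\delta^2)^{1/2}>1$, or $|k_1|>\delta$ and thus $|\bfk|>\delta$, whence $c(|\bfk|)\ge\inf_{r\ge\delta}c(r)>1$ because $c$ is continuous, strictly exceeds $1$ on $(0,\infty)$ and tends to infinity. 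In either case $m(\bfk)\ge 1+\gamma$ for some $\gamma=\gamma(\delta)>0$, so $n(\bfk)\ge\gamma$ and moreover $n\ge\frac{\gamma}{1+\gamma}\,m$; that is, $n$ and $m$ are comparable on ${\mathbb R}^2\setminus C$.

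It then remains to prove $W_X\lesssim n^2 W_Z$ off $C$. The main ingredient is the growth estimate $m(\bfk)^2\gtrsim(1+|\bfk|)(1+k_2^2/k_1^2)$, which follows from $1+2k_2^2/k_1^2\ge 1+k_2^2/k_1^2$ together with the one-variable bound $c(r)^2=(1+\beta r^2)\tanh r/r\gtrsim 1+r$ (using $\tanh r/r\gtrsim(1+r)^{-1}$). By the comparability of $n$ and $m$ this yields $n(\bfk)^2\gtrsim(1+|\bfk|)(1+k_2^2/k_1^2)$ off $C$, and I would finish by bounding the four terms of $W_X$ in turn. The terms $1$ and $k_2^2/k_1^2$ are controlled by $n^2$ alone, since $n^2\gtrsim 1+k_2^2/k_1^2$; for $k_2^4/k_1^2$ I would use the $|\bfk|$-term of $W_Z$ together with $n^2\gtrsim|\bfk|\,k_2^2/k_1^2$ to get $n^2 W_Z\gtrsim|\bfk|^2\,k_2^2/k_1^2\ge k_2^4/k_1^2$; and for $|\bfk|^{2s}$ I would use the $k_1^2|\bfk|^{2s-3}$-term of $W_Z$ together with $n^2\gtrsim|\bfk|(1+k_2^2/k_1^2)$ and the identity $k_1^2(1+k_2^2/k_1^2)=k_1^2+k_2^2=|\bfk|^2$ to get $n^2 W_Z\gtrsim|\bfk|\cdot|\bfk|^2\cdot|\bfk|^{2s-3}=|\bfk|^{2s}$. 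Summing these four contributions gives $W_X\lesssim n^2 W_Z$, which together with the upper bound establishes the two-sided symbol estimate and hence the isomorphism. The principal difficulty throughout is the positivity off the cone, while the only mildly delicate computation is the anisotropic weight-matching for the two highest-order terms, where the identity $k_1^2(1+k_2^2/k_1^2)=|\bfk|^2$ does the essential work.
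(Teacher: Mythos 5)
Your proposal is correct and takes essentially the same route as the paper's proof (which is deferred to Ehrnstr\"{o}m--Groves 2018, \S 2): namely, two-sided pointwise estimates on the symbol over ${\mathbb R}^2\setminus C$ --- uniform positivity $m(\bfk)\geq 1+\gamma(\delta)$ off the cone via the dispersion-relation minimum, the growth bound $m(\bfk)^2\sim(1+|\bfk|)(1+k_2^2/k_1^2)$, and the term-by-term matching of the anisotropic weights $W_X$ and $n^2W_Z$ --- which, for a Fourier multiplier acting between weighted $L^2$ spaces of functions with spectrum off $C$, is equivalent to the isomorphism property. Your case analysis ($|k_1|>\delta$ versus $|k_2/k_1|>\delta$), the comparability $n\sim m$ off $C$, and the weight-matching computations (including the identity $k_1^2(1+k_2^2/k_1^2)=|\bfk|^2$) all check out.
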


\begin{proposition}\label{Products}
The estimates
\[|\partial_x^{m_1}\partial_y^{m_2} u_1|_\infty \lesssim \varepsilon |u_1|_\varepsilon, \qquad m_1,m_2 \geq 0,\]
and
\[
| u_1 v |_Z \lesssim \varepsilon | u_1 |_\varepsilon |v|_{X}, \qquad |v w|_Z \lesssim |v|_X |w|_X
\] 
hold for all $u_1 \in X_1$ and $v,w \in X$. 
\end{proposition}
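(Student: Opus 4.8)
The plan is to prove the three estimates in turn, the pointwise bound being the engine that drives the other two.

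For the derivative bound I would start from the elementary inequality $|w|_\infty\lesssim\int_{{\mathbb R}^2}|\hat w|\dk$ already used in the proof of Proposition~\ref{Continuity of Y}, applied to $w=\partial_x^{m_1}\partial_y^{m_2}u_1$, whose transform $(\i k_1)^{m_1}(\i k_2)^{m_2}\hat u_1(\bfk)$ is supported in the cone $C$. Writing $w_\varepsilon(\bfk)=1+\varepsilon^{-2}k_1^2+\varepsilon^{-2}k_2^2/k_1^2$, so that $|u_1|_\varepsilon^2=\int w_\varepsilon|\hat u_1|^2\dk$, and inserting the factor $w_\varepsilon^{-1/2}w_\varepsilon^{1/2}$, the Cauchy--Schwarz inequality reduces everything to the bound
\[
J_{m_1,m_2}:=\int_C k_1^{2m_1}k_2^{2m_2}\,w_\varepsilon(\bfk)^{-1}\dk\lesssim\varepsilon^2 .
\]
I would establish this by splitting $C$ according to whether $k_1^2+k_2^2/k_1^2\leq\varepsilon^2$ or $>\varepsilon^2$. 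On the first piece $w_\varepsilon^{-1}\leq1$, and a direct computation using $|k_2|\leq\delta|k_1|$ on $C$ gives a contribution of order $\varepsilon^{2m_1+4m_2+3}$; on the second piece $w_\varepsilon^{-1}\leq\varepsilon^2(k_1^2+k_2^2/k_1^2)^{-1}$, and after the substitution $k_2=k_1\mu$ the residual integral $\int_C k_1^{2m_1}k_2^{2m_2}(k_1^2+k_2^2/k_1^2)^{-1}\dk$ is seen to be finite, giving a contribution of order $\varepsilon^2$. Since $2m_1+4m_2+3\geq2$, this yields $J_{m_1,m_2}\lesssim\varepsilon^2$ for small $\varepsilon$ and hence the claimed bound. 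The very same Cauchy--Schwarz step records the Fourier $L^1$-bounds $\int_C|k_1|^{m_1}|k_2|^{m_2}|\hat u_1|\dk\lesssim\varepsilon|u_1|_\varepsilon$, and it is these that will supply the gain of a power of $\varepsilon$ in the product estimates.

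The estimate $|vw|_Z\lesssim|v|_X|w|_X$ is the easiest. By Proposition~\ref{Basic embeddings} one has $X\hookrightarrow H^s\hookrightarrow H^{s-\frac12}$ and $H^{s-\frac12}({\mathbb R}^2)\hookrightarrow Z$, and since $s>\tfrac32$ the space $H^{s-\frac12}({\mathbb R}^2)$ is a multiplication algebra in two dimensions; hence $|vw|_Z\lesssim|vw|_{H^{s-\frac12}}\lesssim|v|_{H^{s-\frac12}}|w|_{H^{s-\frac12}}\lesssim|v|_X|w|_X$. This is just the bilinear form of the mapping property in Proposition~\ref{X to Z mappings}(ii).

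The remaining estimate $|u_1v|_Z\lesssim\varepsilon|u_1|_\varepsilon|v|_X$ is the heart of the matter. I would split the $Z$-weight $1+|\bfk|+k_1^2|\bfk|^{2s-3}$ into its three summands and treat the corresponding pieces of $|u_1v|_Z^2$ separately, working on the Fourier side through $|\widehat{u_1v}(\bfk)|\leq(|\hat u_1|*|\hat v|)(\bfk)$. The decisive structural fact is that $\supp\hat u_1\subseteq C$ is \emph{bounded}, so in each convolution the variable $\mathbf p=(p_1,p_2)$ ranges over $C$ and the weights may be transferred onto the $v$-factor: one writes $k_1=p_1+(k_1-p_1)$ and uses $|\bfk|^{\sigma}\lesssim_\sigma|\bfk-\mathbf p|^{\sigma}+1$ for $\mathbf p\in C$ and $\sigma\geq0$. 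Each piece then becomes a sum of convolutions of the form $(|k_1|^{a}|\hat u_1|)*(\text{weighted }|\hat v|)$ with $a\in\{0,1\}$, to which Young's inequality $L^1*L^2\to L^2$ applies: the first factor contributes $\int_C|k_1|^{a}|\hat u_1|\dk\lesssim\varepsilon|u_1|_\varepsilon$ by the Fourier $L^1$-bounds above, while the second factor carries at most $s-\tfrac12$ derivatives of $v$ in total and is therefore controlled by $|v|_{H^{s-\frac12}}\leq|v|_{H^s}\lesssim|v|_X$ via Proposition~\ref{Basic embeddings}. Collecting the pieces gives $|u_1v|_Z\lesssim\varepsilon|u_1|_\varepsilon|v|_X$.

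I expect the main obstacle to be exactly the term coming from the weight $k_1^2|\bfk|^{2s-3}$ in which the $x$-derivative is forced onto $v$: a naive Leibniz or Kato--Ponce splitting would here demand $\partial_x v\in L^\infty({\mathbb R}^2)$, i.e.\ $H^{s-1}({\mathbb R}^2)\hookrightarrow L^\infty({\mathbb R}^2)$, which fails for $\tfrac32<s\leq2$. The weight-transfer argument circumvents this by keeping both the $x$-derivative and the fractional derivative on $v$, so that $v$ is measured only in $H^{s-\frac12}\subseteq H^s$, and by letting $u_1$ enter solely through the $L^1$-Fourier bounds, where the gain of $\varepsilon$ originates. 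The only point requiring care is to verify that the transferred weight never places more than $s-\tfrac12$ derivatives on $v$, which holds because $|\bfk-\mathbf p|^{s-3/2}|k_1-p_1|\leq|\bfk-\mathbf p|^{s-1/2}$.
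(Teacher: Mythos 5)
Your proof is correct, but there is nothing in this paper to compare it against directly: Proposition~\ref{Products} is stated here without proof, being one of the results quoted from \S 2 of \citet{EhrnstroemGroves18}. Judged on its own merits, your argument goes through. The sup-norm bound is the standard Cauchy--Schwarz computation against the weight $w_\varepsilon$; in fact you can shortcut your case distinction entirely, since $k_1^{2m_1}k_2^{2m_2}$ is bounded on $C$, so only $J_{0,0}\lesssim\varepsilon^2$ is needed, and this follows at once from the scaling $(k_1,k_2)=(\varepsilon\kappa_1,\varepsilon^2\kappa_2)$. (Your exponent $\varepsilon^{2m_1+4m_2+3}$ on the first piece actually uses $|k_2|\leq\varepsilon|k_1|$ there rather than $|k_2|\leq\delta|k_1|$, but either bookkeeping gives $\lesssim\varepsilon^2$.) The bound $|vw|_Z\lesssim|v|_X|w|_X$ correctly combines $X\hookrightarrow H^s\hookrightarrow H^{s-\frac12}$, the algebra property of $H^{s-\frac12}({\mathbb R}^2)$ (valid because $s-\frac12>1$) and the embedding $H^{s-\frac12}({\mathbb R}^2)\hookrightarrow Z$ from Proposition~\ref{Basic embeddings}. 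For the mixed estimate, the weight transfer is legitimate: since $\supp\hat u_1\subseteq C$ is bounded, the square root $1+|\bfk|^{\frac12}+|k_1|\,|\bfk|^{s-\frac32}$ of the $Z$-weight may be redistributed as you indicate, Young's inequality $L^1\ast L^2\to L^2$ applies, every $L^2$ factor is at most $|v|_{H^{s-\frac12}}\lesssim|v|_X$, and every $L^1$ factor is $\lesssim\varepsilon|u_1|_\varepsilon$ by your Fourier $L^1$ bounds.

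One remark on your closing paragraph: the Leibniz route you dismiss is not actually blocked. Writing $k_1\widehat{u_1v}=-\i\,\FF[\partial_x(u_1v)]$ and using the elementary product estimate $|fg|_{H^r}\lesssim|f|_{C_\mathrm{b}^1}|g|_{H^r}$ for $0\leq r\leq 1$ (interpolation between $L^2$ and $H^1$), the allegedly troublesome term $u_1\partial_xv$ is estimated by $|u_1|_{C_\mathrm{b}^1}|\partial_xv|_{H^{s-\frac32}}\lesssim\varepsilon|u_1|_\varepsilon|v|_{H^{s-\frac12}}$, with $s-\tfrac32\in(0,1)$ in the operative range $s<2$; no control of $|\partial_xv|_\infty$ is needed because all sup-norms fall on $u_1$, which is precisely what the first estimate of the proposition supplies (and presumably why it is stated first). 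So both routes work; yours has the virtue of staying entirely on the Fourier side and making the origin of the factor $\varepsilon$ transparent.
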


Finally, we introduce the space $Y_\varepsilon^r=\chi_\varepsilon(\Diffb)Y^r$, where
$\chi_\varepsilon(k_1,k_2)=\chi(\varepsilon k_1, \varepsilon^2 k_2)$ (with norm
$|\cdot|_{Y^r}$), noting the relationship
\[|u|_\varepsilon^2=\varepsilon|\zeta|_{Y^1}^2,
\qquad  u(x,y)=\varepsilon^2 \zeta(\varepsilon x, \varepsilon^2 y)
\]
for $\zeta \in Y_\varepsilon^1$. Observe that $Y_\varepsilon^r$ coincides with
$\chi_\varepsilon(\Diffb)X$, $\chi_\varepsilon(\Diffb)Z$ and $\chi_\varepsilon(\Diffb)L^2({\mathbb R}^2)$
for $\varepsilon>0$, and with $Y^r$ in the limit $\varepsilon \rightarrow 0$.

\section{Reduction} \label{Reduction}

We proceed by making the Ansatz $c = 1 - \varepsilon^2$ and studying equation \eqref{steady FDKP}
in its phase space $Z$.
Note that $u = u_1 + u_2 \in X_1 \oplus X_2$ satisfies this equation if and only if
\begin{align}
\varepsilon^2 u_1 + n(\Diffb) u_1  {+} \chi(\Diffb)(u_1 + u_2)^2 =0, & \qquad \mbox{ in } Z_1,\label{system i} \\
\varepsilon^2 u_2 + n(\Diffb) u_2  {+} (1-\chi(\Diffb))(u_1 + u_2)^2 =0, & \qquad \mbox{ in } Z_2. \label{system ii}
\end{align}
The first step is to solve \eqref{system ii} for $u_2$ as a function of $u_1$ using the following
result, which is proved by a straightforward application of the contraction mapping principle. 

\begin{theorem} \label{fixed-point}
Let $\WW_1$, $\WW_2$ be Banach spaces, $K$ be a continuous function $\overline{B}_1(0) \subseteq \WW_1 \to [0,\infty)$
and $\FF \colon \overline{B}_1(0) \times \WW_2 \to \WW_2$ be a smooth function satisfying
\[
|\FF(w_1,0)|_{\WW_2} \leq \tfrac{1}{2} K(w_1), \qquad |\diff_2 \FF[w_1,w_2]|_{\LL(\WW_2,\WW_2)} \leq \tfrac{1}{3}
\]
for all $(w_1,w_2) \in \overline{B}_1(0) \times \overline{B}_{K(w_1)}(0)$. The fixed-point equation
\[
w_2 = \FF(w_1, w_2)
\]
has for each $w_1 \in \overline{B}_1(0)$ a unique solution $w_2 = w_2(w_1) \in \overline{B}_{K(w_1)}(0)$. Moreover
$w_2$ is a smooth function of $w_1$ and
satisfies
\[
|\diff w_2[w_1] |_{\LL(\WW_1,\WW_2)} \lesssim | \diff_1 \FF[w_1,w_2(w_1)] |_{\LL(\WW_1,\WW_2)}.
\]
\end{theorem}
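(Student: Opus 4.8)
The plan is to treat $w_1$ as a parameter and solve the fixed-point equation for $w_2$ by the contraction mapping principle, then bootstrap to smoothness and the derivative bound via the implicit-function theorem. First I would fix $w_1 \in \overline{B}_1(0)$ and study the map $\Phi_{w_1} := \FF(w_1,\cdot)$ on the closed ball $\overline{B}_{K(w_1)}(0) \subseteq \WW_2$, which, being a closed subset of a Banach space, is itself a complete metric space. The task is to show that $\Phi_{w_1}$ maps this ball into itself and is a contraction there.

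For the self-map property, the triangle inequality together with the mean-value inequality gives, for $w_2 \in \overline{B}_{K(w_1)}(0)$,
\[
|\Phi_{w_1}(w_2)|_{\WW_2} \leq |\FF(w_1,0)|_{\WW_2} + \sup_{\overline{B}_{K(w_1)}(0)}|\diff_2\FF|_{\LL(\WW_2,\WW_2)}\,|w_2|_{\WW_2} \leq \tfrac12 K(w_1) + \tfrac13 K(w_1) = \tfrac56 K(w_1) \leq K(w_1),
\]
where I use the convexity of the ball to guarantee that the whole segment from $0$ to $w_2$ lies in $\overline{B}_{K(w_1)}(0)$, so that the hypothesis on $\diff_2\FF$ applies along it. The same mean-value estimate yields $|\Phi_{w_1}(w_2) - \Phi_{w_1}(w_2')|_{\WW_2} \leq \tfrac13 |w_2-w_2'|_{\WW_2}$, so $\Phi_{w_1}$ is a $\tfrac13$-contraction and Banach's theorem produces the unique fixed point $w_2=w_2(w_1) \in \overline{B}_{K(w_1)}(0)$. (The degenerate case $K(w_1)=0$ forces $\FF(w_1,0)=0$ and hence $w_2(w_1)=0$.)

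For smoothness I would set $\GG(w_1,w_2) := w_2 - \FF(w_1,w_2)$ and observe that $\diff_2\GG[w_1,w_2] = \mathrm{id} - \diff_2\FF[w_1,w_2]$ is invertible: since $|\diff_2\FF|_{\LL(\WW_2,\WW_2)} \leq \tfrac13 < 1$, the Neumann series converges and $|(\mathrm{id}-\diff_2\FF)^{-1}|_{\LL(\WW_2,\WW_2)} \leq \tfrac32$. The implicit-function theorem then supplies, near any interior point $w_1^0$, a smooth solution of $\GG=0$; being continuous, this solution stays in $\overline{B}_{K(w_1)}(0)$ for $w_1$ near $w_1^0$ (here the continuity of $K$ is used) and therefore coincides with $w_2(w_1)$ by uniqueness, giving smoothness of $w_2$ on the interior of $\overline{B}_1(0)$. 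I expect the only genuine subtlety to be reconciling the merely continuous, $w_1$-dependent radius $K(w_1)$ with the implicit-function theorem: the point is that once existence and uniqueness of $w_2(w_1)$ inside the ball are known, the implicit-function theorem is applied to the unconstrained equation $\GG=0$ and is insensitive to the ball, which serves only to locate the unique solution.

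Finally, differentiating the identity $w_2(w_1) = \FF(w_1,w_2(w_1))$ with respect to $w_1$ via the chain rule gives
\[
(\mathrm{id} - \diff_2\FF[w_1,w_2(w_1)])\,\diff w_2[w_1] = \diff_1\FF[w_1,w_2(w_1)],
\]
so that $\diff w_2[w_1] = (\mathrm{id}-\diff_2\FF)^{-1}\,\diff_1\FF$, and the bound $|(\mathrm{id}-\diff_2\FF)^{-1}|_{\LL(\WW_2,\WW_2)} \leq \tfrac32$ yields the claimed estimate $|\diff w_2[w_1]|_{\LL(\WW_1,\WW_2)} \lesssim |\diff_1\FF[w_1,w_2(w_1)]|_{\LL(\WW_1,\WW_2)}$ with implied constant $\tfrac32$.
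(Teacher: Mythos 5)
Your proposal is correct and coincides with the paper's approach: the paper gives no written proof of this theorem, saying only that it ``is proved by a straightforward application of the contraction mapping principle'', and your argument --- the self-map and contraction estimates on $\overline{B}_{K(w_1)}(0)$ via convexity and the mean-value inequality, Banach's theorem, the Neumann-series inversion of $\mathrm{id}-\diff_2\FF$ with bound $\tfrac{3}{2}$, and the implicit-function-theorem bootstrap for smoothness and the derivative estimate --- is exactly that application. One small refinement in the identification of the IFT solution $h$ with $w_2$ near a point $w_1^0$: when $K(w_1^0)>0$ your continuity argument works because the fixed point satisfies $|w_2(w_1^0)|\le\tfrac{5}{6}K(w_1^0)<K(w_1^0)$ (immediate from your self-map bound, so $h$ starts strictly inside the ball), whereas when $K(w_1^0)=0$ continuity of $h$ and $K$ alone does not force $|h(w_1)|\le K(w_1)$; there one should instead note that $w_2(w_1)\in\overline{B}_{K(w_1)}(0)$ tends to $0$ and hence eventually lies in the IFT uniqueness neighbourhood of $0$, which forces $w_2(w_1)=h(w_1)$ and gives smoothness.
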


Write \eqref{system ii} as 
\begin{equation}\label{eqn with G}
u_2=\FF(u_1,u_2), 
\end{equation}
where
\begin{equation}
\label{G}
\FF(u_1,u_2) = -n(\Diffb)^{-1} {(1-\chi(\Diffb) ) \left( \varepsilon^2 u_2 +  (u_1 + u_2)^2 \right)};
\end{equation}
the following mapping property of $\FF$ follows from Propositions \ref{X to Z mappings}(ii) and \ref{n is an iso}.

\begin{proposition} \label{G weak}
Equation \eqref{G} defines a smooth mapping $\FF\colon X_1 \times X_2 \to X_2$.
\end{proposition}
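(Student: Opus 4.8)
The plan is to exhibit $\FF$ as a composition of maps between Banach spaces, each of which is either bounded and linear or smooth by a cited result, and then to invoke the standard facts that bounded linear maps are smooth and that sums and compositions of smooth maps are smooth. First I would isolate the nonlinear core. Writing the decomposition $X = X_1 \oplus X_2$ explicitly, the addition map $A\colon X_1 \times X_2 \to X$, $A(u_1,u_2) = u_1 + u_2$, is bounded and linear, hence smooth; the squaring map $Q\colon X \to Z$, $Q(u)=u^2$, is smooth by Proposition \ref{X to Z mappings}(ii). Consequently $Q \circ A$ is smooth as a composition of smooth maps. The remaining summand $u_2 \mapsto \varepsilon^2 u_2$ is bounded and linear from $X_2$ into $Z$ once one notes the continuous embedding $X \hookrightarrow Z$, which follows from Proposition \ref{Basic embeddings} via the chain $X \hookrightarrow H^s \hookrightarrow H^{s-\frac{1}{2}} \hookrightarrow Z$. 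Adding the two, the map $(u_1,u_2) \mapsto \varepsilon^2 u_2 + (u_1+u_2)^2$ is smooth from $X_1 \times X_2$ into $Z$.

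Next I would handle the two Fourier multipliers as bounded linear operators, which are automatically smooth. The operator $1-\chi(\Diffb)$ is the spectral projection removing the part of the spectrum supported in the region $C$; since it merely restricts the support of $\hat u$ it commutes with the weight defining $|\cdot|_Z$ and therefore maps $Z$ boundedly onto $Z_2$. By Proposition \ref{n is an iso} the multiplier $n(\Diffb)$ is an isomorphism $X_2 \to Z_2$, so its inverse $n(\Diffb)^{-1}\colon Z_2 \to X_2$ is likewise bounded and linear. Hence $-n(\Diffb)^{-1}(1-\chi(\Diffb))$ is a bounded linear map $Z \to X_2$, and in particular smooth.

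Finally I would assemble the pieces: $\FF$ is the composition of the smooth map $(u_1,u_2) \mapsto \varepsilon^2 u_2 + (u_1+u_2)^2$ from $X_1 \times X_2$ into $Z$ with the bounded linear map $-n(\Diffb)^{-1}(1-\chi(\Diffb))\colon Z \to X_2$, and is therefore smooth from $X_1 \times X_2$ into $X_2$, as claimed. I do not expect any serious obstacle here, as the statement is essentially bookkeeping of the mapping properties already established; the only points that merit care are the two facts that make the composition well defined, namely that $\varepsilon^2 u_2$ lands in $Z$ rather than merely in $L^2({\mathbb R}^2)$ (through the embedding $X \hookrightarrow Z$) and that $1-\chi(\Diffb)$ carries $Z$ into the subspace $Z_2$ on which $n(\Diffb)^{-1}$ is defined. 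Both are immediate consequences of Propositions \ref{Basic embeddings} and \ref{n is an iso}.
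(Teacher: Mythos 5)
Your proof is correct and follows essentially the same route as the paper, which simply remarks that the mapping property follows from Propositions \ref{X to Z mappings}(ii) and \ref{n is an iso}; you have merely made explicit the bookkeeping (the embedding $X \hookrightarrow Z$ from Proposition \ref{Basic embeddings} for the linear term, the boundedness of the projection $1-\chi(\Diffb)$ on $Z$, and closure of smoothness under composition) that the paper leaves implicit.
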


\begin{lemma}\label{u_2}
{  Define $U = \{u_1 \in X_1\colon |u_1|_\varepsilon \leq 1\}$.}
Equation \eqref{eqn with G} defines a map
\[U \ni u_1 \mapsto  u_2(u_1) \in X_2\]
which satisfies
\[|u_2(u_1)|_{X_2} \lesssim \varepsilon |u_1|_\varepsilon^2, \qquad |\diff u_2[u_1]|_{\LL(X_1,X_2)} \lesssim \varepsilon |u_1|_\varepsilon.\]
\end{lemma}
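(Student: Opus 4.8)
The result follows directly from the abstract fixed-point Theorem~\ref{fixed-point}, so the work is essentially to verify its hypotheses for the map $\FF$ defined in \eqref{G}, with the identifications $\WW_1 = X_1$, $\WW_2 = X_2$, and $w_1 = u_1$, $w_2 = u_2$. First I would note that $\FF$ is a smooth map $X_1 \times X_2 \to X_2$ by Proposition~\ref{G weak}, which takes care of the regularity requirement. The remaining task is to find a suitable continuous function $K$ on the unit ball of $X_1$ and to establish the two quantitative bounds on $\FF(u_1,0)$ and $\diff_2 \FF[u_1,u_2]$.

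**Key steps.** I would compute $\FF(u_1,0) = -n(\Diffb)^{-1}(1-\chi(\Diffb))u_1^2$ and estimate its $X_2$-norm. Since $n(\Diffb)^{-1}$ is bounded $Z_2 \to X_2$ (Proposition~\ref{n is an iso}) and $(1-\chi(\Diffb))$ is the orthogonal projection onto $Z_2$, this reduces to bounding $|u_1^2|_Z$. Here the crucial input is the product estimate $|u_1 v|_Z \lesssim \varepsilon |u_1|_\varepsilon |v|_X$ from Proposition~\ref{Products}, applied with $v = u_1$ (using the equivalence of $|\cdot|_X$ and $|\cdot|_\varepsilon$ on $X_1$, or more directly $|u_1^2|_Z \lesssim \varepsilon |u_1|_\varepsilon^2$), which yields $|\FF(u_1,0)|_{X_2} \lesssim \varepsilon |u_1|_\varepsilon^2$. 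This factor of $\varepsilon$ is what makes the scheme work and dictates the choice $K(u_1) = C\varepsilon|u_1|_\varepsilon^2$ for a suitable constant $C$; this $K$ is continuous on $\overline{B}_1(0) \subseteq X_1$ and, for $\varepsilon$ small, satisfies $K(u_1) \leq 1$, so the bound $|\FF(u_1,0)|_{X_2} \leq \tfrac{1}{2}K(u_1)$ holds.

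**The derivative bound.** Next I would compute the partial derivative $\diff_2 \FF[u_1,u_2]h = -n(\Diffb)^{-1}(1-\chi(\Diffb))(\varepsilon^2 h + 2(u_1+u_2)h)$ for $h \in X_2$. The $\varepsilon^2 h$ term contributes $O(\varepsilon^2)$ to the operator norm via the boundedness of $n(\Diffb)^{-1}$. For the bilinear term I would split $(u_1+u_2)h = u_1 h + u_2 h$ and apply the two product estimates of Proposition~\ref{Products}: the first gives $|u_1 h|_Z \lesssim \varepsilon |u_1|_\varepsilon |h|_X \lesssim \varepsilon |h|_X$ on $U$, and the second gives $|u_2 h|_Z \lesssim |u_2|_X |h|_X \lesssim K(u_1)|h|_X \lesssim \varepsilon |h|_X$ on the ball $\overline{B}_{K(u_1)}(0)$. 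Hence $|\diff_2 \FF[u_1,u_2]|_{\LL(X_2,X_2)} \lesssim \varepsilon$, which is $\leq \tfrac{1}{3}$ for $\varepsilon$ sufficiently small. Theorem~\ref{fixed-point} then furnishes a unique smooth solution $u_2(u_1) \in \overline{B}_{K(u_1)}(0)$ with $|u_2(u_1)|_{X_2} \leq K(u_1) \lesssim \varepsilon|u_1|_\varepsilon^2$, giving the first claimed estimate. The derivative bound follows from the theorem's conclusion $|\diff u_2[u_1]|_{\LL(X_1,X_2)} \lesssim |\diff_1 \FF[u_1,u_2(u_1)]|_{\LL(X_1,X_2)}$, which I would estimate exactly as the $\diff_2\FF$ bound above but keeping one factor of $|u_1|_\varepsilon$ explicit (since the linearisation in the $u_1$-direction produces terms like $u_1 h$ and $u_2(u_1) h$ with $h \in X_1$), yielding $\lesssim \varepsilon|u_1|_\varepsilon$.

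**Main obstacle.** The scheme is structurally routine once the fixed-point theorem is granted; the only genuinely delicate point is extracting the gain of $\varepsilon$ uniformly, which hinges entirely on using the \emph{scaled} norm $|\cdot|_\varepsilon$ on $X_1$ together with the anisotropic product estimate $|u_1 v|_Z \lesssim \varepsilon |u_1|_\varepsilon |v|_X$. I expect the main care to be needed in tracking which factors carry the $\varepsilon$-gain and which carry explicit powers of $|u_1|_\varepsilon$, so that the final two estimates come out with exactly the stated homogeneity ($\varepsilon|u_1|_\varepsilon^2$ and $\varepsilon|u_1|_\varepsilon$ respectively); the smallness of $\varepsilon$ is then exactly what guarantees both $K(u_1)\leq 1$ and the contraction constant $\tfrac13$.
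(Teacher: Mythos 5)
Your proposal is correct and follows essentially the same route as the paper: both apply Theorem~\ref{fixed-point} with $\WW_1=(X_1,|\cdot|_\varepsilon)$, $\WW_2=(X_2,|\cdot|_X)$, choose $K(u_1)=\sigma\varepsilon|u_1|_\varepsilon^2$, and derive the two estimates from Proposition~\ref{n is an iso} and the product bounds of Proposition~\ref{Products}, including the final bound on $\diff u_2[u_1]$ via $\diff_1\FF[u_1,u_2(u_1)]$. The only cosmetic difference is that the paper passes through the explicit chain $|u_1|_X\lesssim|u_1|_{L^2}\leq|u_1|_\varepsilon$ (uniform in $\varepsilon$) rather than invoking norm equivalence on $X_1$, which is the cleaner justification since the reverse comparison is not $\varepsilon$-uniform.
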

\begin{proof}
We apply Theorem~\ref{fixed-point} to equation \eqref{eqn with G} with $\WW_1=(X_1,|\cdot|_\varepsilon)$,
$\WW_2=(X_2,|\cdot|_X)$. Note that
\begin{eqnarray*}
\diff_1 \FF[u_1,u_2](v_1) &= & -n(\Diffb)^{-1}{(1-\chi(\Diffb) )(2(u_1+u_2)v_1)}, \nonumber \\
\diff_2 \FF[u_1,u_2](v_2) &= &- n(\Diffb)^{-1}{(1-\chi(\Diffb))(\varepsilon^2 v_2 + 2(u_1+u_2)v_2)}
\end{eqnarray*}
and
\[
| (n(\Diffb))^{-1} {(1-\chi(\Diffb))} u|_X \lesssim |u|_Z,
\]
by Proposition \ref{n is an iso}. Using Proposition \ref{Products}, we therefore find that
\[
|\FF(u_1,0)|_X \lesssim |u_1^2|_Z \lesssim   \varepsilon  |u_1|_\varepsilon |u_1|_X
\lesssim \varepsilon |u_1|_\varepsilon |u_1|_{L^2} \leq \varepsilon |u_1|_\varepsilon^2
\]
and
\begin{eqnarray*}
|\diff_2  \FF[u_1,u_2](v_2)|_X
&\lesssim \varepsilon^2 |v_2|_Z + |u_1 v_2|_Z  + |u_2 v_2|_Z\nonumber \\
&\lesssim (\varepsilon^2 + \varepsilon |u_1|_\varepsilon + |u_2|_X) |v_2|_{X}.
\end{eqnarray*}
To satisfy the assumptions of Theorem~\ref{fixed-point},
we choose $K(u_1)=\sigma \varepsilon |u_1|_\varepsilon^2$ for a sufficiently large value
of $\sigma>0$, so that
\[
|u_2|_X \lesssim \tfrac{1}{2}K(u_1), \qquad
 |\diff_2  \FF[u_1,u_2]|_{\LL(X_2,X_2)}  \lesssim \varepsilon
\]
for $(u_1,u_2)  \in U \times  \overline{B}_{K(u_1)}(0)$. The theorem asserts the existence
of a unique solution $u_2(u_1) \in \overline{B}_{K(u_1)}(0)$
of \eqref{eqn with G} for each $u_1 \in U$
which satisfies
\[
|u_2(u_1)|_X \lesssim \varepsilon | u_1 |_\varepsilon^2
\] and
\begin{align*}
|\diff u_2[u_1](v_1) |_X & \lesssim | \diff_1 \FF[u_1,u_2(u_1)] (v_1)|_X \\
&\lesssim |u_1 v_1|_Z + |u_2(u_1) v_1|_Z\nonumber \\  
&\lesssim \varepsilon  (|u_1|_X + |u_2(u_1)|_X) |v_1|_\varepsilon\nonumber \\
&\lesssim \varepsilon (|u_1|_\varepsilon + \varepsilon |u_1|_\varepsilon^2) |v_1|_\varepsilon,
\end{align*}
where we have used Proposition \ref{Products}.
\end{proof}

Our next result shows in particular that $u=u_1+u_2(u_1)$ belongs to $H^\infty({\mathbb R}^2)=\smash{\bigcap\limits_{j=1}^\infty} H^j({\mathbb R}^2)$
for each $u_1 \in U_1$.

\begin{proposition} \label{u is smooth}
Any function $u=u_1+u_2 \in X_1 \oplus X_2$ which satisfies \eqref{eqn with G}
belongs to $H^\infty({\mathbb R}^2)$.
\end{proposition}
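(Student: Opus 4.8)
The plan is to treat the two spectral components of $u$ separately and then bootstrap the Sobolev regularity of $u_2$ directly from the fixed-point equation \eqref{eqn with G}. First observe that $u_1 \in X_1 = \chi(\Diffb)L^2({\mathbb R}^2)$ has spectrum supported in the \emph{bounded} set $C$ (on $C$ one has $|k_1| \leq \delta$ and hence $|k_2| \leq \delta|k_1| \leq \delta^2$). Thus $\langle\bfk\rangle$ is bounded on $\supp \hat u_1$, so $u_1 \in H^j({\mathbb R}^2)$ for every $j$ and therefore $u_1 \in H^\infty({\mathbb R}^2)$ at once. It remains to show $u_2 \in H^\infty({\mathbb R}^2)$, after which $u = u_1 + u_2 \in H^\infty({\mathbb R}^2)$. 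The starting point is the regularity already available: Proposition \ref{Basic embeddings} gives $u_2 \in X \hookrightarrow H^s({\mathbb R}^2)$, and since $s > \tfrac32 > 1$ the space $H^\sigma({\mathbb R}^2)$ is a Banach algebra for $\sigma > 1$, so $u \in H^s$ and $u^2 \in H^s$.

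The engine of the argument is a mapping estimate for the Fourier multiplier $n(\Diffb)^{-1}(1-\chi(\Diffb))$ appearing in \eqref{G}: I claim it maps $H^\sigma({\mathbb R}^2) \to H^{\sigma+\frac12}({\mathbb R}^2)$ for each $\sigma \geq 0$. Since the cut-off factor $1-\chi(\Diffb)$ is $L^2$-bounded, this reduces to the symbol estimate
\[
\langle\bfk\rangle^{\frac12} \lesssim n(\bfk) = m(\bfk)-1 \qquad \text{on } \supp(1-\chi).
\]
To verify it I would write $m(\bfk) = c(|\bfk|)\bigl(1+2k_2^2/k_1^2\bigr)^{\frac12}$, where $c(r) = (1+\beta r^2)^{\frac12}(\tanh r/r)^{\frac12}$ is the dispersion function with global minimum $c(0)=1$. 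On $\supp(1-\chi)$ one has $|k_1| \geq \delta$ or $|k_2/k_1| \geq \delta$: in the first case $|\bfk| \geq \delta$, and as $c(r) > 1$ for $r>0$ with $c(r)\to\infty$, continuity gives $\inf_{r\geq\delta} c(r) > 1$; in the second case the anisotropic factor exceeds $(1+2\delta^2)^{\frac12} > 1$. Hence $m(\bfk) \geq 1+\eta$ for some $\eta = \eta(\delta) > 0$, providing a uniform positive lower bound for $n$ away from the origin. For large $|\bfk|$ the first two factors combine to $\sim \sqrt{\beta}\,|\bfk|^{\frac12}$ while the third is $\geq 1$, so $m(\bfk) \gtrsim |\bfk|^{\frac12}$. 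Combining the two bounds yields the claimed symbol estimate and hence the half-derivative gain.

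With the mapping estimate established the bootstrap is routine. Suppose $u_2 \in H^\sigma$ with $\sigma \geq s$; then $u = u_1 + u_2 \in H^\sigma$ (as $u_1 \in H^\infty$) and, by the algebra property, $\varepsilon^2 u_2 + (u_1+u_2)^2 \in H^\sigma$. Applying $n(\Diffb)^{-1}(1-\chi(\Diffb))$ to \eqref{eqn with G} and using the mapping estimate gives $u_2 \in H^{\sigma+\frac12}$. Starting from $\sigma = s$ and iterating, one obtains $u_2 \in H^{s+k/2}({\mathbb R}^2)$ for every $k \in {\mathbb N}$, whence $u_2 \in H^\infty({\mathbb R}^2)$ and the result follows.

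I expect the main obstacle to be precisely the symbol estimate: establishing the uniform lower bound $m-1 \geq \eta$ on $\supp(1-\chi)$ and the clean half-derivative gain while keeping track of the anisotropic factor $(1+2k_2^2/k_1^2)^{\frac12}$, which is singular as $k_1 \to 0$. The saving observation is that this factor only makes $m$ \emph{larger}, hence $n^{-1}$ \emph{smaller}, so the anisotropy assists rather than obstructs the estimate; the remaining bootstrap and algebra steps are then entirely standard.
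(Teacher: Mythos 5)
Your proof is correct, and it rests on the same basic mechanism as the paper's — a bootstrap on the fixed-point equation \eqref{eqn with G} in which each iteration gains half a derivative from inverting $n(\Diffb)$ off the cone $C$, with the Sobolev algebra property handling the nonlinearity — but the implementation is genuinely different. The paper stays inside its anisotropic scales, treating the regularity index $s$ in $X$ and $Z$ as a variable parameter: the nonlinearity maps $X_1 \oplus X_2^{s}$ into $Z_2^{s+\frac{1}{2}}$ via the algebra property and the embedding $X^s \hookrightarrow (1-\chi(\Diffb))H^s({\mathbb R}^2) \hookrightarrow Z_2^{s+\frac{1}{2}}$, and the half-derivative gain is then delivered by Proposition \ref{n is an iso} (that $n(\Diffb)$ is an isomorphism $X_2^{s+\frac{1}{2}} \to Z_2^{s+\frac{1}{2}}$), a result carried over from the earlier variational paper. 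You instead work entirely in the isotropic Sobolev scale and prove from scratch the symbol bound $n(\bfk) \gtrsim \langle\bfk\rangle^{\frac{1}{2}}$ on $\supp(1-\chi)$: the uniform lower bound $m \geq 1+\eta$ off the cone (using that the dispersion function $c$ has its global minimum $1$ at the origin, as the paper states, and that the anisotropic factor only increases $m$), combined with $m \gtrsim |\bfk|^{\frac{1}{2}}$ at infinity, so that $n(\Diffb)^{-1}(1-\chi(\Diffb))$ maps $H^\sigma({\mathbb R}^2)$ to $H^{\sigma+\frac{1}{2}}({\mathbb R}^2)$ directly. What your route buys is self-containedness and transparency: it needs none of the anisotropic machinery and makes the source of the smoothing (the $|\bfk|^{\frac{1}{2}}$ growth of the FDKP symbol) completely explicit, including the correct observation that the factor $(1+2k_2^2/k_1^2)^{\frac{1}{2}}$, though singular as $k_1 \to 0$, only enlarges $m$ and hence helps. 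What the paper's route buys is brevity: the isomorphism and embeddings are already on the shelf, so only the regularity index needs to be re-examined. Both arguments are complete.
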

\begin{proof} Obviously $u_1 \in H^\infty({\mathbb R}^2)$, and to show that $u_2$ is also smooth we {abandon} the fixed regularity index in the spaces $X$ and $Z$ and state it explicitly as a variable parameter.
Since $H^{s}({\mathbb R}^2)$ is an algebra for $s > \frac{3}{2}$ and $X^s \hookrightarrow
(1-\chi(D))H^s({\mathbb R}^2) \hookrightarrow Z_2^{s+\frac{1}{2}}$ (see Proposition \ref{Basic embeddings}),
the mapping\pagebreak
\[
X_1 \oplus X_2^{s} \ni (u_1, u_2) \mapsto -(1-\chi(\Diffb) ) \left( \varepsilon^2 u_2 +  (u_1 + u_2)^2 \right) \in Z_2^{s+\frac{1}{2}}
\]
is continuous. It follows that $u_2 \in X_2^{s+\frac{1}{2}}$, because $n(\Diffb)$ is an isomorphism $X_2^{s+\frac{1}{2}} \to Z_2^{s+\frac{1}{2}}$ by Proposition~\ref{n is an iso}. Bootstrapping this argument yields
$u_2 \in X_2^s \subset H^s({\mathbb R}^2)$ for any $s \in {\mathbb R}$.
\end{proof}

The next step is to substitute $u_2=u_2(u_1)$ into equation \eqref{system i} to obtain the reduced equation
\[
\varepsilon^2 u_1 + n(\Diffb) u_1  {+} \chi(\Diffb)(u_1 + u_2(u_1))^2 =0
\]
for $u_1$. We can write this equation as
\[
\varepsilon^2 u_1 + n(\Diffb) u_1  {+} \chi(\Diffb)u_1^2 + R_\varepsilon(u_1)=0,
\]
where
\begin{equation}
R_\varepsilon(u_1)=\chi(\Diffb) \big(2u_1u_2(u_1) + u_2(u_1)^2\big). \label{Defn of Re}
\end{equation}

\begin{proposition}
The function $R_\varepsilon\colon U \subseteq X_1 \rightarrow Z_1$ satisfies the estimates
\[|R_\varepsilon(u_1)|_{L^2} \lesssim \varepsilon^2 |u_1|_\varepsilon^3, \qquad |\diff R_\varepsilon[u_1]|_{\LL(X_1,L^2({\mathbb R}^2))} \lesssim \varepsilon^2 |u_1|_\varepsilon^2.\]
\end{proposition}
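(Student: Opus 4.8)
The plan is to bound directly in $Z$ each of the bilinear and quadratic terms making up $R_\varepsilon(u_1)=\chi(\Diffb)\big(2u_1u_2(u_1)+u_2(u_1)^2\big)$, using the product estimates of Proposition~\ref{Products} together with the bounds $|u_2(u_1)|_X \lesssim \varepsilon|u_1|_\varepsilon^2$ and $|\diff u_2[u_1]|_{\LL(X_1,X_2)}\lesssim\varepsilon|u_1|_\varepsilon$ supplied by Lemma~\ref{u_2}. Since $\chi(\Diffb)$ is the orthogonal projection of $L^2({\mathbb R}^2)$ onto $Z_1$ and $Z\hookrightarrow L^2({\mathbb R}^2)$ by Proposition~\ref{Basic embeddings}, one has $|R_\varepsilon(u_1)|_{L^2}\lesssim|2u_1u_2(u_1)+u_2(u_1)^2|_Z$, so it suffices to estimate the unprojected products in $Z$.

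For the first estimate I would write $u_2=u_2(u_1)$ and bound $|u_1u_2|_Z\lesssim\varepsilon|u_1|_\varepsilon|u_2|_X$ (using that $u_1\in X_1$, the first inequality of Proposition~\ref{Products}) and $|u_2^2|_Z\lesssim|u_2|_X^2$ (the second inequality). Inserting Lemma~\ref{u_2} yields $|u_1u_2|_Z\lesssim\varepsilon^2|u_1|_\varepsilon^3$ and $|u_2^2|_Z\lesssim\varepsilon^2|u_1|_\varepsilon^4$, and since $|u_1|_\varepsilon\leq1$ on $U$ the second contribution is absorbed into the first, giving $|R_\varepsilon(u_1)|_{L^2}\lesssim\varepsilon^2|u_1|_\varepsilon^3$.

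For the derivative I would differentiate using the product and chain rules, obtaining, with $\dot u_2=\diff u_2[u_1](v_1)$,
\[
\diff R_\varepsilon[u_1](v_1)=\chi(\Diffb)\big(2v_1u_2+2u_1\dot u_2+2u_2\dot u_2\big).
\]
The first two terms each contain exactly one factor ($v_1$, respectively $u_1$) lying in $X_1$, so the first inequality of Proposition~\ref{Products} applies and produces a factor of $\varepsilon$; combined with the $O(\varepsilon)$ bounds of Lemma~\ref{u_2} for $u_2$ and $\dot u_2$ this gives $\varepsilon^2|u_1|_\varepsilon^2|v_1|_\varepsilon$ for each. The third term has no $X_1$ factor, but both $u_2$ and $\dot u_2$ are already $O(\varepsilon)$, so the second inequality of Proposition~\ref{Products} gives $|u_2\dot u_2|_Z\lesssim|u_2|_X|\dot u_2|_X\lesssim\varepsilon^2|u_1|_\varepsilon^3|v_1|_\varepsilon$, again absorbed using $|u_1|_\varepsilon\leq1$. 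Taking the supremum over $|v_1|_\varepsilon\leq1$ yields the operator-norm bound.

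There is no serious obstacle here: the estimates are a bookkeeping exercise once Lemma~\ref{u_2} and Proposition~\ref{Products} are in hand. The only point requiring care is to keep track of which factor in each product lies in $X_1$, so that the sharper product estimate (the one carrying the extra $\varepsilon$) is applied wherever possible; this is exactly what upgrades the naive $O(\varepsilon)$ coming from the size of $u_2$ to the required $O(\varepsilon^2)$.
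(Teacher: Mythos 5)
Your proposal is correct and follows essentially the same route as the paper: both estimates are obtained by splitting $R_\varepsilon$ and its derivative into the same product terms, bounding $\chi(\Diffb)$ in $L^2$ via the $Z$-norm, and applying the two product estimates of Proposition~\ref{Products} together with the bounds of Lemma~\ref{u_2}, absorbing the higher-order terms using $|u_1|_\varepsilon\leq 1$. (Incidentally, your derivative formula carries the correct factor of $2$ on the middle term, which the paper omits as a harmless typo.)
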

\begin{proof}
By using Proposition \ref{Products} and Lemma \ref{u_2} it follows from \eqref{Defn of Re} that
\begin{align*}
|R_\varepsilon(u_1)|_{L^2} &\lesssim |u_1 u_2(u_1)|_Z + |u_2(u_1)^2|_Z \\
& \lesssim  \varepsilon |u_1|_\varepsilon |u_2(u_1)|_X + |u_2(u_1)|_X^2 \\
& \lesssim \varepsilon^2 |u_1|_\varepsilon^3,
\end{align*}
and from
\[
\diff R_\varepsilon[u_1](v_1) = \chi(\Diffb) \big(2 v_1 u_2(u_1) +u_1 \diff u_2[u_1](v_1) + 2 u_2(u_1)\diff u_2[u_1](v_1)\big)
\]
that
\begin{align*}
|\diff R_\varepsilon[u_1](v_1) |_{L^2} &\lesssim |v_1 u_2(u_1)|_Z + |u_1 \diff u_2[u_1](v_1) |_Z + |u_2(u_1)\diff u_2[u_1](v_1)|_Z \\
& \lesssim  \varepsilon |v_1|_\varepsilon |u_2(u_1)|_X + \varepsilon |u_1|_\varepsilon |\diff u_2[u_1](v_1))|_X + |u_2(u_1)|_X|\diff u_2[u_1](v_1)|_X\\
& \lesssim \varepsilon^2 |u_1|_\varepsilon^2 |v_1|_\varepsilon.\qedhere
\end{align*}
\end{proof}

The reduction is completed by introducing the KP scaling
\[
u_1(x,y) = \varepsilon^2 \zeta(\varepsilon x,\varepsilon^2 y),
\]
noting that $I\colon u_1 \mapsto \zeta$ is an isomorphism $X_1 \rightarrow Y_\varepsilon^1$ and $Z_1 \rightarrow Y_\varepsilon^0$
and choosing $M>1$ large enough so that $\zeta_k^\star \in B_M(0)$ (and $\varepsilon<M^{-2}$, so that
$B_M(0) \subseteq Y_\varepsilon^1$  is contained in $I[U]=B_{\varepsilon^{\smash{-\frac{1}{2}}}}(0)\subseteq Y_\varepsilon^1$).
Here we have replaced $(Z_1,|\cdot|_{L^2})$
by the identical space $(Y_\varepsilon^0, |\cdot|_{Y^0})$ in order to work exclusively with the scales $\{Y^r,|\cdot|_{Y^r}\}_{r \geq 0}$ and
$\{Y_\varepsilon^r,|\cdot|_{Y^r}\}_{r \geq 0}$ of function spaces.
We find that $\zeta \in B_M(0) \subseteq Y_\varepsilon^1$ satisfies the equation
\begin{equation}
\varepsilon^{-2}n_\varepsilon(\Diffb)\zeta+\zeta
+ \chi_\varepsilon(\Diffb)\zeta^2 + S_\varepsilon(\zeta)=0,
\label{Final reduced eqn}
\end{equation}
which now holds in $Y_\varepsilon^0$, where
\[n_\varepsilon(\bfk) = n(\varepsilon k_1, \varepsilon^2 k_2)\]
and $S_\varepsilon\colon B_M(0) \subseteq Y_\varepsilon^1 \rightarrow Y_\varepsilon^0$ satisfies the estimates
\begin{equation}
|S_\varepsilon(\zeta)|_{Y^0} \lesssim \varepsilon | \zeta |_{Y^1}^3, \quad
|\diff S_\varepsilon[\zeta]|_{\LL(Y^1,Y^0)}\lesssim \varepsilon| \zeta |_{Y^1}^2. \label{S estimates}
\end{equation}
Note that $|u_1|_\varepsilon^2=\varepsilon |\zeta|_{Y^1}^2$ and that the change of variables
from $(x,y)$ to $(\varepsilon x, \varepsilon^2y)$ introduces a further factor of $\varepsilon^{\frac{3}{2}}$ in the
remainder term.

Finally, observe that the FDKP equation
\[
\varepsilon^2 u + n(\Diffb) u  +u^2 =0
\]
is invariant under $u(x,y) \mapsto u(-x,y)$ and $u(x,y) \mapsto u(x,-y)$ and the reduction procedure
preserves this invariance: equation \eqref{Final reduced eqn} is invariant under $\zeta(x,y) \mapsto \zeta(-x,y)$ and $\zeta(x,y) \mapsto \zeta(x,-y)$.

\medskip

\section{Solution of the reduced equation} \label{Existence}

In this section we construct solitary-wave solutions of the reduced equation \eqref{Final reduced eqn},
noting that in the formal limit $\varepsilon \rightarrow 0$ it reduces to the KP equation \eqref{normalised steady KP},
which has explicit (symmetric) solitary-wave solutions $\zeta_k^\star$.
For this purpose we use a  perturbation argument, rewriting \eqref{Final reduced eqn}
as a fixed-point equation and applying the following variant of the implicit-function theorem. 
It is necessary to use a low regularity version of the implicit-function theorem since the reduction in Section \ref{Reduction}
is performed using the \(\varepsilon\)-dependent norm \(|\cdot|_\varepsilon\) and thus does not yield information concerning the smoothness of
$u_1$ as a function of $\varepsilon$.

\begin{theorem} \label{IFT}
Let $\WW$ be a Banach space, $W_0$ and $\Lambda_0$ be open neighbourhoods of respectively $w^\star$ in $\WW$ and the origin in ${\mathbb R}$, and $\GG\colon  W_0 \times \Lambda_0 \rightarrow \WW$ be a function which is differentiable with respect to $w \in W_0$ for each $\lambda \in \Lambda_0$.
Furthermore, suppose that 
$\GG(w^\star,0)=0$, $\diff_1\GG[w^\star,0]\colon \WW \rightarrow \WW$ is an isomorphism,
\[
\lim_{w \rightarrow w^\star}|\diff_1\GG[w, 0]-\diff_1\GG[w^\star,0]|_{\LL(\WW,\WW)}=0
\]
and
\[\lim_{\lambda \rightarrow 0} |\GG(w,\lambda)-\GG(w,0)|_{\WW}=0, \quad \lim_{\lambda \rightarrow 0} \
|\diff_1\GG[w,\lambda]-\diff_1\GG[w,0]|_{\LL(\WW,\WW)}=0\]
uniformly over $w \in W_0$.

There exist open neighbourhoods $W \subseteq W_0$ of $w^\star$ in $\WW$ and $\Lambda \subseteq \Lambda_0$ of the origin in ${\mathbb R}$, and a uniquely determined mapping
$h\colon \Lambda \rightarrow W$ with the properties that
\begin{list}{(\roman{count})}{\usecounter{count}}
\item
$h$ is continuous at the origin with $h(0)=w^\star$,
\item
$\GG(h(\lambda),\lambda)=0$ for all $\lambda \in \Lambda$,
\item
$w=h(\lambda)$ whenever $(w,\lambda) \in  W \times \Lambda$ satisfies $\GG(w,\lambda)=0$.
\end{list}
\end{theorem}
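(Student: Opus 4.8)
The plan is to recast the equation $\GG(w,\lambda)=0$ as a fixed-point problem amenable to the contraction mapping principle. Writing $A=\diff_1\GG[w^\star,0]$, which is an isomorphism by hypothesis, I would introduce for each $\lambda \in \Lambda_0$ the Newton-type map
$$T_\lambda(w)=w-A^{-1}\GG(w,\lambda),$$
noting that $w$ is a fixed point of $T_\lambda$ precisely when $\GG(w,\lambda)=0$, and that $T_\lambda$ inherits differentiability in $w$ from $\GG$, with $\diff_1 T_\lambda[w]=A^{-1}\big(A-\diff_1\GG[w,\lambda]\big)$. The goal is to show that, on a suitably small closed ball $\overline{B}_r(w^\star) \subseteq W_0$ and for $\lambda$ in a suitably small neighbourhood $\Lambda$ of the origin, $T_\lambda$ is a contraction mapping this ball into itself, uniformly in $\lambda$.

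First I would establish the contraction estimate. Fixing $\theta \in (0,1)$ and decomposing
$$A-\diff_1\GG[w,\lambda]=\big(\diff_1\GG[w^\star,0]-\diff_1\GG[w,0]\big)+\big(\diff_1\GG[w,0]-\diff_1\GG[w,\lambda]\big),$$
the first bracket is controlled by the hypothesis $\lim_{w\to w^\star}|\diff_1\GG[w,0]-\diff_1\GG[w^\star,0]|_{\LL(\WW,\WW)}=0$, which fixes $r>0$ (small enough that $\overline{B}_r(w^\star)\subseteq W_0$), and the second bracket by the uniform limit $\lim_{\lambda\to0}|\diff_1\GG[w,\lambda]-\diff_1\GG[w,0]|_{\LL(\WW,\WW)}=0$, which fixes $\Lambda$; together these give $|\diff_1 T_\lambda[w]|_{\LL(\WW,\WW)}\leq\theta$ for all $(w,\lambda)\in\overline{B}_r(w^\star)\times\Lambda$. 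Since the ball is convex and $T_\lambda$ is differentiable in $w$, the mean-value inequality then yields $|T_\lambda(w)-T_\lambda(\tilde w)|_{\WW}\leq\theta|w-\tilde w|_{\WW}$ on the ball.

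Next I would verify the self-mapping property. Since $\GG(w^\star,0)=0$, the estimate
$$|T_\lambda(w^\star)-w^\star|_{\WW}=|A^{-1}\GG(w^\star,\lambda)|_{\WW}\leq|A^{-1}|_{\LL(\WW,\WW)}\,|\GG(w^\star,\lambda)-\GG(w^\star,0)|_{\WW}$$
tends to zero as $\lambda\to0$ by the uniform limit $\lim_{\lambda\to0}|\GG(w,\lambda)-\GG(w,0)|_{\WW}=0$; shrinking $\Lambda$ if necessary so that this quantity is at most $(1-\theta)r$, the triangle inequality $|T_\lambda(w)-w^\star|\leq\theta|w-w^\star|+|T_\lambda(w^\star)-w^\star|\leq\theta r+(1-\theta)r=r$ shows that $T_\lambda$ maps $\overline{B}_r(w^\star)$ into itself. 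The contraction mapping principle now furnishes, for each $\lambda\in\Lambda$, a unique fixed point $h(\lambda)\in\overline{B}_r(w^\star)$, which solves $\GG(h(\lambda),\lambda)=0$; this gives (ii), while uniqueness of the fixed point in the ball gives (iii) with $W=B_r(w^\star)$, and evaluating at $\lambda=0$ gives $h(0)=w^\star$. Finally, continuity at the origin (i) follows from the same ingredients: combining the fixed-point identity $h(\lambda)=T_\lambda(h(\lambda))$ with the contraction estimate gives $(1-\theta)|h(\lambda)-w^\star|_{\WW}\leq|T_\lambda(w^\star)-w^\star|_{\WW}\to0$ as $\lambda\to0$.

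The chief point requiring care -- and the reason a bespoke statement is needed in place of the classical implicit-function theorem -- is that $\GG$ is assumed only to be differentiable in $w$ and merely continuous in $\lambda$ in the weak, uniform sense encoded by the two $\lambda\to0$ limits; there is no joint differentiability or even joint continuity, reflecting the fact that the $\varepsilon$-dependent reduction of Section~\ref{Reduction} gives no control on the regularity of $u_1$ in $\varepsilon$. Consequently I cannot differentiate in $\lambda$ and must extract everything from the mean-value inequality on the convex ball together with the separate smallness of the $w$- and $\lambda$-increments of $\diff_1\GG$; keeping the contraction constant $\theta$ uniform over $\lambda\in\Lambda$ is precisely what makes the fixed points $h(\lambda)$ well defined on a full neighbourhood of the origin and what drives the continuity statement (i).
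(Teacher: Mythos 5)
The paper states Theorem~\ref{IFT} without proof, treating it as a known low-regularity variant of the implicit-function theorem, so there is no in-paper argument to compare against; your uniform-contraction (Newton-map) proof is correct and is precisely the standard argument on which such a statement rests, with the hypotheses used exactly where they are needed (the two $w$- and $\lambda$-increments of $\diff_1\GG$ for the contraction constant, the uniform limit of $\GG(w^\star,\lambda)$ for the self-mapping and for continuity at $\lambda=0$). The only cosmetic refinement is to shrink $\Lambda$ so that $|A^{-1}|_{\LL(\WW,\WW)}\,|\GG(w^\star,\lambda)-\GG(w^\star,0)|_{\WW}$ is strictly less than $(1-\theta)r$, so that the fixed points $h(\lambda)$ land in the open ball $W=B_r(w^\star)$ rather than merely in its closure.
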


Our main result is the following theorem, which is proved by reformulating equation \eqref{Final reduced eqn}
in an appropriately chosen function space and verifying that it satisfies the assumptions of Theorem~\ref{IFT} 
through a series of auxiliary results.

\begin{theorem} \label{Final strong existence thm}
Fix $\theta \in (\frac{1}{2},1)$.
For each sufficiently small value of $\varepsilon>0$ equation \eqref{Final reduced eqn} has a small-amplitude, symmetric
solution $\zeta_k^\varepsilon$ in $Y_\varepsilon^{1+\theta}$
with $|\zeta_k^\varepsilon-\zeta_k^\star|_{Y^{1+\theta}} \rightarrow 0$ as $\varepsilon \rightarrow 0$.
\end{theorem}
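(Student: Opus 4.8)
The plan is to apply Theorem~\ref{IFT} with the Banach space $\WW=Y^{1+\theta}$ (restricted to the subspace of symmetric functions invariant under $\zeta(x,y)\mapsto\zeta(-x,y)$ and $\zeta(x,y)\mapsto\zeta(x,-y)$, which is preserved by the reduction), the distinguished point $w^\star=\zeta_k^\star$, the parameter $\lambda=\varepsilon$, and the map $\GG$ obtained by recasting equation \eqref{Final reduced eqn} as a fixed-point problem. The natural choice is to solve for the `invertible part' of the linearised operator: writing the symbol $\varepsilon^{-2}n_\varepsilon(\bfk)$ as an approximation to the KP symbol $\tilde m(\bfk)-1$, I would set
\[
\GG(\zeta,\varepsilon)=\zeta+\big(\tilde m(\Diffb)\big)^{-1}\!\Big(\chi_\varepsilon(\Diffb)\zeta^2 + S_\varepsilon(\zeta) + \big[\varepsilon^{-2}n_\varepsilon(\Diffb)-(\tilde m(\Diffb)-1)\big]\zeta\Big),
\]
so that $\GG(\zeta,\varepsilon)=0$ is equivalent to \eqref{Final reduced eqn}, and $\GG(\cdot,0)=0$ reduces to the normalised KP equation \eqref{normalised steady KP} satisfied by $\zeta_k^\star$. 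Here $\tilde m(\Diffb)$ should be understood as acting with the cut-off $\chi_\varepsilon$ so that the inverse is well-defined on $Y_\varepsilon^{1+\theta}$; in the limit $\varepsilon\to 0$ the cut-off disappears and one works on all of $Y^{1+\theta}$.

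The verification then proceeds through the hypotheses of Theorem~\ref{IFT} in turn. First, $\GG(\zeta_k^\star,0)=0$ holds by construction since $\zeta_k^\star$ solves \eqref{normalised steady KP}. Second, $\diff_1\GG[\zeta_k^\star,0]=\mathrm{Id}+(\tilde m(\Diffb))^{-1}(2\zeta_k^\star\,\cdot\,)$ is precisely the operator whose kernel is governed by the linearised KP equation in Lemma~\ref{LWY results}(iii); the nondegeneracy there, combined with symmetry, shows this operator is injective on the symmetric subspace, and a Fredholm-type argument (using that multiplication by the smooth, decaying $\zeta_k^\star$ composed with the smoothing $(\tilde m(\Diffb))^{-1}$ is compact on $Y^{1+\theta}$) upgrades injectivity to invertibility. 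The continuity of $\zeta\mapsto\diff_1\GG[\zeta,0]$ at $\zeta_k^\star$ follows from the product/multiplier estimates of Section~\ref{Function spaces} applied in the scale $Y^r$. Third, the two limits as $\varepsilon\to 0$ are exactly where the symbol comparison \eqref{FDKP to KP} and the remainder estimates \eqref{S estimates} do the work: the term $\varepsilon^{-2}n_\varepsilon(\bfk)-(\tilde m(\bfk)-1)=O(\varepsilon^2|(k_1,k_2/k_1)|^4)$ and the factor $\varepsilon$ in \eqref{S estimates} force both $\GG(\zeta,\varepsilon)-\GG(\zeta,0)$ and $\diff_1\GG[\zeta,\varepsilon]-\diff_1\GG[\zeta,0]$ to zero uniformly on a fixed neighbourhood of $\zeta_k^\star$.

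The main obstacle I expect is the invertibility of $\diff_1\GG[\zeta_k^\star,0]$ on the chosen anisotropic space $Y^{1+\theta}$, rather than on the energy space $Y^1$ for which the nondegeneracy is stated. The nondegeneracy result in Lemma~\ref{LWY results}(iii) gives triviality of the kernel among smooth evanescent functions, so I must check that any $Y^{1+\theta}$ solution of the homogeneous linearised equation is in fact smooth and decaying—this is an elliptic-regularity/bootstrap argument exploiting that $\zeta_k^\star$ is a Schwartz-class multiplier and that $\tilde m(\Diffb)^{-1}$ gains anisotropic derivatives—so that Lemma~\ref{LWY results}(iii) applies and yields injectivity. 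The second delicate point is ensuring the various estimates hold uniformly on $Y_\varepsilon^{1+\theta}$ with constants independent of $\varepsilon$, and that the cut-off $\chi_\varepsilon$ does not spoil the invertibility uniformly; this requires that the choice $\theta\in(\frac12,1)$ is used (via Proposition~\ref{Continuity of Y}, which needs $r>\frac32$, so $1+\theta>\frac32$) to control $L^\infty$-type nonlinear estimates and to secure the limit~\eqref{uniform KP approx} once the solution is produced. Having obtained $h(\varepsilon)=\zeta_k^\varepsilon$ with $\zeta_k^\varepsilon\to\zeta_k^\star$ in $Y^{1+\theta}$, the symmetry assertion is inherited from the invariance of $\GG$ and the uniqueness clause (iii) of Theorem~\ref{IFT}.
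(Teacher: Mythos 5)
Your overall architecture matches the paper's: recast \eqref{Final reduced eqn} as a fixed-point problem, apply Theorem~\ref{IFT} on the symmetric subspace of $Y^{1+\theta}$ with $w^\star=\zeta_k^\star$, prove that $I+2\tilde m(\Diffb)^{-1}(\zeta_k^\star\,\cdot)$ is an isomorphism via compactness plus a regularity bootstrap feeding into Lemma~\ref{LWY results}(iii), and inherit symmetry from invariance; that part of your proposal is essentially the paper's argument. The genuine gap lies in how you set up the map $\GG$: you keep the dispersive discrepancy as an additive perturbation $\tilde m(\Diffb)^{-1}\bigl[\varepsilon^{-2}n_\varepsilon(\Diffb)-(\tilde m(\Diffb)-1)\bigr]\zeta$ and claim it tends to zero uniformly as $\varepsilon\to 0$. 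It does not. On the support of $\chi_\varepsilon$ the available symbol bound is
\[
\left|\frac{\varepsilon^{-2}n_\varepsilon(\bfk)-(\tilde m(\bfk)-1)}{\tilde m(\bfk)}\right|
\lesssim\frac{\varepsilon^{2}\,|(k_1,\tfrac{k_2}{k_1})|^{4}}{1+|(k_1,\tfrac{k_2}{k_1})|^{2}}
\lesssim\varepsilon^{2}\,|(k_1,\tfrac{k_2}{k_1})|^{2},
\]
and since $|(k_1,\tfrac{k_2}{k_1})|$ ranges up to $\sim\delta/\varepsilon$ on $C_\varepsilon$, this is only $O(\delta^2)$; indeed at the corner $k_1=\delta/\varepsilon$, $k_2=0$ the quotient equals a fixed nonzero multiple of $\delta^2$, independently of $\varepsilon$. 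Consequently $|\diff_1\GG[\zeta,\varepsilon]-\diff_1\GG[\zeta,0]|_{\LL(Y^{1+\theta},Y^{1+\theta})}$ does not tend to zero (test with functions whose spectrum concentrates near the corner of $C_\varepsilon$), the uniform-convergence hypotheses of Theorem~\ref{IFT} fail, and even an ad hoc repair would at best give a solution within $O(\delta^2)$ of $\zeta_k^\star$ rather than the claimed convergence. The underlying problem is a loss of two anisotropic derivatives: a single factor $\tilde m(\Diffb)^{-1}$ cannot absorb the quartic growth of the numerator.

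The paper avoids this precisely by never treating the dispersion difference as a perturbation of the linear part: it inverts the full operator and writes \eqref{Final reduced eqn} as $\zeta+\varepsilon^2\bigl(n_\varepsilon(\Diffb)+\varepsilon^2\bigr)^{-1}\bigl(\chi_\varepsilon(\Diffb)\zeta^2+S_\varepsilon(\zeta)\bigr)=0$, and then compares \emph{inverses}. Proposition~\ref{Replace nonloc with diff} gives
\[
\left|\frac{\varepsilon^2}{\varepsilon^2+n_\varepsilon(\bfk)}-\frac{1}{\tilde m(\bfk)}\right|
\lesssim\frac{\varepsilon\,|(k_1,\tfrac{k_2}{k_1})|^{3}}{\bigl(1+|(k_1,\tfrac{k_2}{k_1})|^{2}\bigr)^{2}}
\lesssim\frac{\varepsilon}{\bigl(1+|(k_1,\tfrac{k_2}{k_1})|^{2}\bigr)^{1/2}},
\]
where now \emph{both} $\varepsilon^2+n_\varepsilon(\bfk)\gtrsim\varepsilon^2\bigl(1+|(k_1,\tfrac{k_2}{k_1})|^{2}\bigr)$ and $\tilde m(\bfk)$ sit in the denominator, so the numerator's growth is fully cancelled and the resulting operator difference is $O(\varepsilon^{1-\theta})$ with a gain of regularity (Corollary~\ref{Replacement}, Proposition~\ref{prop:T}). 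You should reformulate your $\GG$ this way. A second, smaller defect: your $\GG(\cdot,\varepsilon)$ is not defined on a fixed, $\varepsilon$-independent neighbourhood, since $S_\varepsilon$ only accepts arguments with spectrum in $C_\varepsilon$; attaching a cut-off to $\tilde m(\Diffb)^{-1}$ does not address this ($\tilde m(\Diffb)^{-1}$ is globally defined anyway, as $\tilde m\geq1$). The paper's fix is to precompose the argument with the projection, $G_\varepsilon(\zeta)=F_\varepsilon(\chi_\varepsilon(\Diffb)\zeta)$, which in turn necessitates the estimates of Lemmas~\ref{Difference of G 1}--\ref{Difference of G 2} and Corollary~\ref{Difference of G 3} to show $\tilde m(\Diffb)^{-1}\bigl(\chi_\varepsilon(\Diffb)\bigl((\chi_\varepsilon(\Diffb)\zeta)^2\bigr)-\zeta^2\bigr)\to0$ uniformly — a step your proposal passes over without argument.
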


The first step in the proof of Theorem \ref{Final strong existence thm} is to write \eqref{Final reduced eqn} as the fixed-point equation
\begin{equation}
\zeta+\varepsilon^2\big(n_\varepsilon(\Diffb)+\varepsilon^2\big)^{-1}
\big(\chi_\varepsilon(\Diffb)\zeta^2+S_\varepsilon(\zeta)\big)=0 \label{Final reduced eqn FP}
\end{equation}
and use the following results to `replace' $\varepsilon^2\left(n_\varepsilon(\Diffb)+\varepsilon^2\right)^{-1}$ with $\tilde{m}(\Diffb)^{-1}$.\pagebreak

\begin{proposition} \label{Replace nonloc with diff}
The inequality
\[\left|\frac{\varepsilon^2}{\varepsilon^2 + n_\varepsilon(\bfk)}- \frac{1}{\tilde{m}(\bfk)}\right| \lesssim \frac{\varepsilon}{\big(1+|(k_1,\frac{k_2}{k_1})|^2\big)^{1/2}}\]
holds uniformly over $|k_1|, |\tfrac{k_2}{k_1}| < \frac{\delta}{\varepsilon}$.
\end{proposition}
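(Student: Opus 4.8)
The plan is to reduce the estimate to the expansion \eqref{FDKP to KP} of $m$ about the origin combined with the homogeneity of the KP symbol under the scaling $(k_1,k_2)\mapsto(\varepsilon k_1,\varepsilon^2 k_2)$. Write $q=(k_1,\tfrac{k_2}{k_1})$, so that the argument $(\varepsilon k_1,\varepsilon^2 k_2)$ has $q$-coordinates $(\varepsilon k_1,\tfrac{\varepsilon^2 k_2}{\varepsilon k_1})=\varepsilon q$. The non-constant part of $\tilde m$, namely $P(q):=\tilde m(\bfk)-1=\tfrac{k_2^2}{k_1^2}+\tfrac12(\beta-\tfrac13)k_1^2$, is homogeneous of degree two in $q$, whence $\tilde m(\varepsilon k_1,\varepsilon^2 k_2)-1=\varepsilon^2 P(q)=\varepsilon^2(\tilde m(\bfk)-1)$. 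Since \eqref{FDKP to KP} gives $m(\varepsilon k_1,\varepsilon^2 k_2)=\tilde m(\varepsilon k_1,\varepsilon^2 k_2)+O(|\varepsilon q|^4)$ (the remainder being quartic because $m$ is an even analytic function of each of $k_1$ and $\tfrac{k_2}{k_1}$), I would record
\[
\varepsilon^2+n_\varepsilon(\bfk)=\varepsilon^2\big(1+P(q)\big)+R(\varepsilon q)=\varepsilon^2\tilde m(\bfk)+R(\varepsilon q),\qquad |R(\varepsilon q)|\lesssim\varepsilon^4|q|^4,
\]
where $R$ denotes the quartic remainder.

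Next I would rewrite the quantity to be estimated as a single fraction,
\[
\frac{\varepsilon^2}{\varepsilon^2+n_\varepsilon(\bfk)}-\frac{1}{\tilde m(\bfk)}
=\frac{-R(\varepsilon q)}{\tilde m(\bfk)\,\big(\varepsilon^2\tilde m(\bfk)+R(\varepsilon q)\big)},
\]
so that the task splits into an upper bound on the numerator and a lower bound on the denominator. The numerator obeys $|R(\varepsilon q)|\lesssim\varepsilon^4|q|^4$, while $\tilde m(\bfk)=1+P(q)\gtrsim 1+|q|^2$ because $\beta>\tfrac13$ makes $P$ positive definite in $q$.

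The main obstacle — and the reason the cone $C_\varepsilon$ enters — is the lower bound on the second factor of the denominator, i.e. showing that $R(\varepsilon q)$ is genuinely subordinate to $\varepsilon^2\tilde m(\bfk)$ uniformly over $C_\varepsilon$. The crucial point is that on $C_\varepsilon$ the scaled frequency stays bounded: $|\varepsilon k_1|,|\tfrac{\varepsilon^2 k_2}{\varepsilon k_1}|<\delta$, so $|\varepsilon q|<\sqrt2\,\delta$ and the expansion \eqref{FDKP to KP} is valid there. Consequently
\[
\frac{|R(\varepsilon q)|}{\varepsilon^2\tilde m(\bfk)}\lesssim\frac{\varepsilon^4|q|^4}{\varepsilon^2(1+|q|^2)}\le\varepsilon^2|q|^2=|\varepsilon q|^2<2\delta^2,
\]
and choosing the fixed value of $\delta\in(0,1)$ small enough (depending only on $\beta$ and the implied constant) forces this ratio below $\tfrac12$; hence $|\varepsilon^2\tilde m(\bfk)+R(\varepsilon q)|\ge\tfrac12\varepsilon^2\tilde m(\bfk)$ throughout $C_\varepsilon$. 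This is the only delicate step, and it is exactly what fixes the admissible size of $\delta$.

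Combining the three bounds gives
\[
\left|\frac{\varepsilon^2}{\varepsilon^2+n_\varepsilon(\bfk)}-\frac{1}{\tilde m(\bfk)}\right|
\lesssim\frac{\varepsilon^4|q|^4}{\varepsilon^2\tilde m(\bfk)^2}
=\frac{\varepsilon^2|q|^4}{\tilde m(\bfk)^2}
\lesssim\varepsilon\cdot\frac{\varepsilon|q|^4}{(1+|q|^2)^2}.
\]
Since $\varepsilon|q|<\sqrt2\,\delta$ is bounded, the final factor is controlled by $\tfrac{|q|^3}{(1+|q|^2)^2}\le(1+|q|^2)^{-1/2}$, using the elementary inequality $|q|^3\le(1+|q|^2)^{3/2}$. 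This yields the bound $\lesssim\varepsilon\,(1+|q|^2)^{-1/2}$ with $|q|=|(k_1,\tfrac{k_2}{k_1})|$, which is precisely the claimed estimate. All steps beyond the denominator bound are elementary manipulations of the expansion and the homogeneity of $P$.
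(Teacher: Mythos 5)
Your proof is correct in its overall architecture and shares the paper's skeleton: write the difference as a single fraction, bound the numerator by a Taylor remainder of $m$ in the variables $q=(k_1,\tfrac{k_2}{k_1})$, and bound the denominator below by $\varepsilon^2$ times a quantity $\gtrsim(1+|q|^2)^2$. Two differences are worth recording. First, you invoke the quartic remainder in \eqref{FDKP to KP} (justified by evenness of $m$ in $k_1$ and $\tfrac{k_2}{k_1}$), whereas the paper gets by with the cruder cubic bound $\bigl|n(s)-(\beta-\tfrac13)s_1^2-\tfrac{s_2^2}{s_1^2}\bigr|\lesssim|(s_1,\tfrac{s_2}{s_1})|^3$; both suffice, since $|q|^3\le(1+|q|^2)^{3/2}$ already yields the claimed decay, and your extra factor $\varepsilon|q|\lesssim\delta$ is simply absorbed into the constant. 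Second — and this is the substantive point — your lower bound on $\varepsilon^2+n_\varepsilon(\bfk)=\varepsilon^2\tilde m(\bfk)+R(\varepsilon q)$ is perturbative: you force $|R(\varepsilon q)|\le\tfrac12\varepsilon^2\tilde m(\bfk)$, which works only if the fixed constant $\delta$ is taken small (depending on the implied constants). The paper avoids any such restriction by using the coercivity estimate $n(s)\gtrsim|(s_1,\tfrac{s_2}{s_1})|^2$ for $|(s_1,\tfrac{s_2}{s_1})|\le\delta$, which holds for \emph{every} fixed $\delta\in(0,1)$: in these variables $n$ is continuous, strictly positive away from the origin (because $\beta>\tfrac13$ gives the dispersion relation a strict global minimum there), and has a nondegenerate quadratic minimum at the origin, so the bound holds on the compact set $|(s_1,\tfrac{s_2}{s_1})|\le\delta$ with a $\delta$-dependent constant; this gives $\varepsilon^2+n_\varepsilon(\bfk)\gtrsim\varepsilon^2(1+|q|^2)$ directly.

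The consequence is that, strictly speaking, you prove the proposition only under an additional smallness assumption on $\delta$, which was fixed (arbitrarily in $(0,1)$) back when the spectral decomposition was introduced. This is harmless for the overall construction — nothing in the paper prevents choosing $\delta$ small at the outset, and you do flag the assumption explicitly — but it is a weaker statement than the one claimed. The repair is immediate: replace your perturbative denominator estimate by the coercivity of $n$ on the cone (equivalently, note $\varepsilon^2+n_\varepsilon(\bfk)\geq\varepsilon^2$ and $n_\varepsilon(\bfk)\gtrsim\varepsilon^2|q|^2$ there), after which your argument goes through verbatim for any fixed $\delta\in(0,1)$ and the smallness condition disappears.
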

\begin{proof}
Recall that \(\beta > \frac{1}{3}\). Clearly
\[
\left|\frac{\varepsilon^2}{\varepsilon^2 + n_\varepsilon(\bfk)}- \frac{1}{\tilde{m}(\bfk)}\right|
=\frac{\left|n_\varepsilon(\bfk)-(\beta-\tfrac{1}{3})\varepsilon^2 k_1^2-\varepsilon^2\tfrac{k_2^2}{k_1^2}\right|}{(\varepsilon^2 + n_\varepsilon(\bfk))
\left(1+(\beta-\tfrac{1}{3})k_1^2+\tfrac{k_2^2}{k_1^2}\right)}.
\]
Furthermore, since $n(s)$ is an analytic function of $s_1$ and $\tfrac{s_2}{s_1}$, we have that
\[
\left|n(s)-\left(\beta-\frac{1}{3}\right)s_1^2-\frac{s_2^2}{s_1^2}\right|\lesssim \left|\left(s_1,\frac{s_2}{s_1}\right)\right|^3\]
and by the definition of \(n\) that
\[
n(s) \gtrsim \left|\left(s_1,\frac{s_2}{s_1}\right)\right|^2\]
for $|(s_1,\tfrac{s_2}{s_1})| \leq \delta$.
It follows that
\[
\left|\frac{\varepsilon^2}{\varepsilon^2 + n_\varepsilon(\bfk)}- \frac{1}{\tilde{m}(\bfk)}\right|
\lesssim
\frac{\varepsilon |(k_1,\frac{k_2}{k_1})|^3}{(1+|(k_1,\frac{k_2}{k_1})|^2)^2}
\]
uniformly over $|k_1|, |\tfrac{k_2}{k_1}| < \frac{\delta}{\varepsilon}$.
\end{proof}

\begin{corollary} \label{Replacement}
For each $\theta \in [0,1]$ the inequality
\[\left|\frac{\varepsilon^2}{\varepsilon^2 + n_\varepsilon(\bfk)}- \frac{1}{\tilde{m}(\bfk)}\right| \lesssim \frac{\varepsilon^{1-\theta}}{(1+|(k_1,\frac{k_2}{k_1})|^2)^{\frac{1}{2}(1+\theta)}}\]
holds uniformly over $|k_1|, |\tfrac{k_2}{k_|}| < \frac{\delta}{\varepsilon}$.
\end{corollary}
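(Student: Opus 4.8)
The plan is to deduce the whole family of estimates by interpolating between two endpoint bounds. The case $\theta=0$ is nothing other than Proposition \ref{Replace nonloc with diff}, so it suffices to establish the case $\theta=1$ and then take an appropriate geometric mean. Throughout I write $r=|(k_1,\tfrac{k_2}{k_1})|$ for brevity and work on the stated range $|k_1|,|\tfrac{k_2}{k_1}|<\tfrac{\delta}{\varepsilon}$, denoting by $Q(\bfk)$ the quantity on the left-hand side of the inequality.

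For the endpoint $\theta=1$ I would abandon the idea of estimating the difference directly and instead bound the two terms separately. On the one hand, since $\tilde{m}(\bfk)=1+\tfrac{k_2^2}{k_1^2}+\tfrac12(\beta-\tfrac13)k_1^2$ and $\beta>\tfrac13$, one has $\tilde{m}(\bfk)\gtrsim 1+r^2$ and hence $\tfrac{1}{\tilde{m}(\bfk)}\lesssim(1+r^2)^{-1}$. On the other hand, the coercivity $n(s)\gtrsim|(s_1,\tfrac{s_2}{s_1})|^2$ used in the proof of Proposition \ref{Replace nonloc with diff}, applied with $s=(\varepsilon k_1,\varepsilon^2 k_2)$ so that $|(s_1,\tfrac{s_2}{s_1})|=\varepsilon r$, gives $n_\varepsilon(\bfk)\gtrsim\varepsilon^2 r^2$, whence $\varepsilon^2+n_\varepsilon(\bfk)\gtrsim\varepsilon^2(1+r^2)$ and therefore $\tfrac{\varepsilon^2}{\varepsilon^2+n_\varepsilon(\bfk)}\lesssim(1+r^2)^{-1}$. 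Adding the two contributions yields
\[
\left|\frac{\varepsilon^2}{\varepsilon^2 + n_\varepsilon(\bfk)}- \frac{1}{\tilde{m}(\bfk)}\right| \lesssim \frac{1}{1 + r^2},
\]
which is precisely the assertion for $\theta=1$.

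It then remains only to interpolate. The bounds for $\theta=0$ and $\theta=1$ read $Q(\bfk)\lesssim\varepsilon(1+r^2)^{-1/2}$ and $Q(\bfk)\lesssim(1+r^2)^{-1}$; writing $Q=Q^{1-\theta}Q^{\theta}$, raising the first bound to the power $1-\theta$ and the second to the power $\theta$, and multiplying gives
\[
Q(\bfk) \lesssim \left(\frac{\varepsilon}{(1+r^2)^{1/2}}\right)^{1-\theta}\left(\frac{1}{1+r^2}\right)^{\theta} = \frac{\varepsilon^{1-\theta}}{(1+r^2)^{\frac{1}{2}(1+\theta)}},
\]
since $\tfrac{1-\theta}{2}+\theta=\tfrac12(1+\theta)$, which is the stated inequality. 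I expect the interpolation step to be entirely routine and the only point demanding a little care to be the endpoint estimate $n_\varepsilon(\bfk)\gtrsim\varepsilon^2 r^2$: one must verify that the range $|k_1|,|\tfrac{k_2}{k_1}|<\tfrac{\delta}{\varepsilon}$ really keeps $(\varepsilon k_1,\varepsilon^2 k_2)$ inside the region where the coercivity of $n$ is available, which it does up to harmlessly shrinking $\delta$ or enlarging the implicit constants.
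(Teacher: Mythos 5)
Your proof is correct, but it reaches the conclusion by a slightly different mechanism than the paper. The paper never sets up a second endpoint: it observes that on the region $|k_1|,|\tfrac{k_2}{k_1}|<\tfrac{\delta}{\varepsilon}$ one has $r=|(k_1,\tfrac{k_2}{k_1})|\lesssim \delta/\varepsilon$, hence $\varepsilon\lesssim\delta(1+r^2)^{-\frac12}$, and then simply splits $\varepsilon=\varepsilon^{1-\theta}\varepsilon^{\theta}$ in the bound of Proposition \ref{Replace nonloc with diff}, converting the factor $\varepsilon^{\theta}$ into decay $(1+r^2)^{-\theta/2}$. In other words, on this spectral region powers of $\varepsilon$ and powers of decay are directly interchangeable, so the whole family of estimates, including your $\theta=1$ endpoint, is already contained in the $\theta=0$ case. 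Your route instead derives the $\theta=1$ bound independently, by estimating the two terms separately via $\tilde m(\bfk)\gtrsim 1+r^2$ and the coercivity $n_\varepsilon(\bfk)\gtrsim\varepsilon^2 r^2$, and then takes a pointwise geometric mean; this is sound (the interpolation of scalar inequalities is trivial, and the constants stay uniform in $\theta$), and your endpoint argument in effect re-uses the same lower bounds on the two denominators that appear inside the proof of Proposition \ref{Replace nonloc with diff}, so it is self-contained but a little longer. The one point you flag — that the stated region only guarantees $|(s_1,\tfrac{s_2}{s_1})|\leq\sqrt2\,\delta$ rather than $\leq\delta$ for $s=(\varepsilon k_1,\varepsilon^2 k_2)$ — is indeed harmless, and the paper's own proof of the proposition glosses over exactly the same enlargement, so your treatment is consistent with theirs.
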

\begin{proof}
This result follows from Proposition \ref{Replace nonloc with diff} and the observation that
$\varepsilon \lesssim \delta(1+|(k_1,\frac{k_2}{k_2})|^2)^{-\frac{1}{2}}$ for $|k_1|, |\tfrac{k_2}{k_1}| < \frac{\delta}{\varepsilon}$.
\end{proof}

Using Corollary \ref{Replacement}, one can write equation \eqref{Final reduced eqn FP} as
\begin{equation}
\zeta+F_\varepsilon(\zeta)=0, \label{Abstract FP}
\end{equation}
in which
\[
F_\varepsilon(\zeta)= \tilde{m}(\Diffb)^{-1}\chi_\varepsilon(\Diffb)\zeta^2
+\underbrace{T_{1,\varepsilon}(\zeta)+T_{2,\varepsilon}(\zeta)}_{\displaystyle = T_\varepsilon(\zeta)}
\]
and
\[
T_{1,\varepsilon}(\zeta) = \left(\varepsilon^2\big(n_\varepsilon(\Diffb)+\varepsilon^2\big)^{-1}-\tilde{m}(\Diffb)^{-1}\right)\chi_\varepsilon(\Diffb)\zeta^2,
\qquad
T_{2,\varepsilon}(\zeta) =
\varepsilon^2\big(n_\varepsilon(\Diffb)+\varepsilon^2\big)^{-1}S_\varepsilon(\zeta).
\]

\begin{proposition}\label{prop:T}
Fix $\theta \in [0,1]$. The mapping
$T_\varepsilon\colon B_M(0) \subseteq Y_\varepsilon^1 \rightarrow Y_\varepsilon^{1+\theta}$ satisfies
\[
|T_\varepsilon(\zeta)|_{Y^{1+\theta}} \lesssim \varepsilon^{1-\theta} | \zeta |_{Y^1}^2, \quad
|\diff T_\varepsilon[\zeta]|_{\LL(Y^1,Y^{1+\theta})}\lesssim \varepsilon^{1-\theta}| \zeta |_{Y^1}
\]
for all $\zeta \in Y_\varepsilon^{1+\theta}$.
\end{proposition}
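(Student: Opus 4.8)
The plan is to treat the two pieces $T_{1,\varepsilon}$ and $T_{2,\varepsilon}$ of $T_\varepsilon$ separately, reducing each bound to a Fourier-multiplier estimate supplied by Corollary~\ref{Replacement} together with the embedding $Y^1 \hookrightarrow L^4$ from Proposition~\ref{Y embeddings}(i). Note first that both $\chi_\varepsilon(\Diffb)\zeta^2$ and $S_\varepsilon(\zeta) \in Y_\varepsilon^0$ have spectrum supported in $C_\varepsilon$, and the Fourier multipliers appearing in $T_{1,\varepsilon}$ and $T_{2,\varepsilon}$ do not enlarge this support, so that $T_\varepsilon(\zeta)$ indeed lies in $Y_\varepsilon^{1+\theta}$. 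For $T_{1,\varepsilon}(\zeta)=A_\varepsilon(\Diffb)\chi_\varepsilon(\Diffb)\zeta^2$, where $A_\varepsilon(\bfk)=\varepsilon^2(n_\varepsilon(\bfk)+\varepsilon^2)^{-1}-\tilde m(\bfk)^{-1}$, I would write out the $Y^{1+\theta}$ norm in Fourier variables and observe that the weight $(1+k_1^2+\tfrac{k_2^2}{k_1^2})^{1+\theta}=(1+|(k_1,\tfrac{k_2}{k_1})|^2)^{1+\theta}$ is exactly cancelled by the square of the bound $|A_\varepsilon(\bfk)|\lesssim \varepsilon^{1-\theta}(1+|(k_1,\tfrac{k_2}{k_1})|^2)^{-\frac12(1+\theta)}$ from Corollary~\ref{Replacement}, valid on $C_\varepsilon$. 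This leaves $|T_{1,\varepsilon}(\zeta)|_{Y^{1+\theta}}\lesssim \varepsilon^{1-\theta}|\chi_\varepsilon(\Diffb)\zeta^2|_{Y^0}\le \varepsilon^{1-\theta}|\zeta^2|_{L^2}$, and since $|\zeta^2|_{L^2}=|\zeta|_{L^4}^2\lesssim|\zeta|_{Y^1}^2$ by Proposition~\ref{Y embeddings}(i) this yields the required quadratic bound for $T_{1,\varepsilon}$.

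For $T_{2,\varepsilon}(\zeta)=B_\varepsilon(\Diffb)S_\varepsilon(\zeta)$ with $B_\varepsilon(\bfk)=\varepsilon^2(n_\varepsilon(\bfk)+\varepsilon^2)^{-1}$, the key point is that $B_\varepsilon(\Diffb)$ maps $Y_\varepsilon^0$ into $Y_\varepsilon^{1+\theta}$ with operator norm bounded uniformly in $\varepsilon$. Indeed, applying the triangle inequality to Corollary~\ref{Replacement} and using $\tilde m(\bfk)\gtrsim 1+|(k_1,\tfrac{k_2}{k_1})|^2$ (which holds because $\beta>\tfrac13$) gives
\[
(1+|(k_1,\tfrac{k_2}{k_1})|^2)^{\frac12(1+\theta)}B_\varepsilon(\bfk)\lesssim (1+|(k_1,\tfrac{k_2}{k_1})|^2)^{\frac{\theta-1}{2}}+\varepsilon^{1-\theta}\lesssim 1
\]
on $C_\varepsilon$, since $\theta\le 1$ and $\varepsilon<1$. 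Hence $|T_{2,\varepsilon}(\zeta)|_{Y^{1+\theta}}\lesssim|S_\varepsilon(\zeta)|_{Y^0}\lesssim\varepsilon|\zeta|_{Y^1}^3$ by the first estimate in \eqref{S estimates}. Because $\zeta\in B_M(0)$ one has $|\zeta|_{Y^1}^3\le M|\zeta|_{Y^1}^2$, and since $0<\varepsilon<1$ and $\theta\ge0$ we have $\varepsilon\le\varepsilon^{1-\theta}$; combining these absorbs the cubic bound into $\varepsilon^{1-\theta}|\zeta|_{Y^1}^2$, and adding the two contributions proves the first asserted estimate.

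The derivative estimates follow the same pattern. Since $\diff T_{1,\varepsilon}[\zeta](v)=2A_\varepsilon(\Diffb)\chi_\varepsilon(\Diffb)(\zeta v)$, the multiplier bound reduces the $Y^{1+\theta}$-norm to $\varepsilon^{1-\theta}|\zeta v|_{L^2}$, and $Y^1\hookrightarrow L^4$ with Cauchy--Schwarz gives $|\zeta v|_{L^2}\le|\zeta|_{L^4}|v|_{L^4}\lesssim|\zeta|_{Y^1}|v|_{Y^1}$. Likewise $\diff T_{2,\varepsilon}[\zeta](v)=B_\varepsilon(\Diffb)\diff S_\varepsilon[\zeta](v)$, so the uniform boundedness of $B_\varepsilon(\Diffb)$ and the second estimate in \eqref{S estimates} give $|\diff T_{2,\varepsilon}[\zeta](v)|_{Y^{1+\theta}}\lesssim\varepsilon|\zeta|_{Y^1}^2|v|_{Y^1}$, which is absorbed into $\varepsilon^{1-\theta}|\zeta|_{Y^1}|v|_{Y^1}$ exactly as before.

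I expect the only genuinely delicate point to be the multiplier analysis, and in particular the exact matching in $T_{1,\varepsilon}$ between the decay rate $(1+|(k_1,\tfrac{k_2}{k_1})|^2)^{-\frac12(1+\theta)}$ furnished by Corollary~\ref{Replacement} and the weight defining the $Y^{1+\theta}$ norm: it is precisely this cancellation that converts the $\varepsilon^{1-\theta}$ gain in the symbol into the asserted smoothing from $Y_\varepsilon^1$ into $Y_\varepsilon^{1+\theta}$. Once this, and the uniform-in-$\varepsilon$ boundedness of $B_\varepsilon(\Diffb)$, are established, the remaining steps are routine applications of the $L^4$-embedding and the estimates \eqref{S estimates}, with the only bookkeeping being the harmless absorptions $\varepsilon\le\varepsilon^{1-\theta}$ and $|\zeta|_{Y^1}\le M$.
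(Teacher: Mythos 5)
Your proposal is correct and follows essentially the same route as the paper: the bound for $T_{1,\varepsilon}$ via the multiplier decay of Corollary~\ref{Replacement} cancelling the $Y^{1+\theta}$ weight followed by the embedding $Y^1 \hookrightarrow L^4({\mathbb R}^2)$, and the bound for $T_{2,\varepsilon}$ via the uniform (in $\varepsilon$) smoothing bound on $\varepsilon^2\big(n_\varepsilon(\Diffb)+\varepsilon^2\big)^{-1}$ combined with the estimates \eqref{S estimates}. The only cosmetic differences are that the paper derives the uniform bound on $\varepsilon^2\big(n_\varepsilon(\Diffb)+\varepsilon^2\big)^{-1}$ by invoking Corollary~\ref{Replacement} with $\theta=1$ rather than your triangle-inequality variant, and packages the function and derivative estimates for $T_{1,\varepsilon}$ into a single bilinear calculation in $\zeta$ and $\rho$, whereas you differentiate explicitly; the absorptions $\varepsilon \leq \varepsilon^{1-\theta}$ and $|\zeta|_{Y^1} \leq M$ that you spell out are left implicit in the paper.
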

\begin{proof}
The result for $T_{1,\varepsilon}$ follows from the calculation
$$\left|\left(\varepsilon^2\big(n_\varepsilon(\Diffb)+\varepsilon^2\big)^{-1}\!\!-\tilde{m}(\Diffb)^{-1}\right)\!\chi_\varepsilon(\Diffb)\zeta\rho\right|_{Y^{1+\theta}}
\!\!\lesssim\! \varepsilon^{1-\theta}|\zeta\rho|_0 \!\lesssim\! \varepsilon^{1-\theta}|\zeta|_{L^4} |\rho|_{L^4} \!\lesssim\! \varepsilon^{1-\theta}|\zeta|_{Y^{1+\theta}}|\rho|_{Y^{1+\theta}}$$
for all $\zeta$, $\rho \in Y_\varepsilon^{1+\theta}$ (see Corollary \ref{Replacement} and Proposition \ref{Y embeddings}(i)).
Corollary \ref{Replacement} (with $\theta=1$) also yields
$$
\frac{\varepsilon^2}{n_\varepsilon(\bfk)+\varepsilon^2} \lesssim \left(1+k_1^2+\frac{k_2^2}{k_1^2}\right)^{-1},
$$
and the result for $T_{2,\varepsilon}$ follows from this estimate and \eqref{S estimates}.
\end{proof}

\begin{remark}
We can also consider $T_\varepsilon$ as a mapping $T_\varepsilon\colon B_M(0) \subseteq Y_\varepsilon^{1+\theta} \rightarrow Y_\varepsilon^{1+\theta}$ with identical estimates since $\{Y_\varepsilon^r,|\cdot|_{Y^r}\}_{r \geq 0}$ is a scale of Banach spaces.
\end{remark}

It is convenient to replace equation \eqref{Abstract FP} with
\[
\zeta+G_\varepsilon(\zeta)=0,
\]
where $G_\varepsilon(\zeta) = F_\varepsilon(\chi_\varepsilon(\Diffb)\zeta)$, and study it in the fixed space $Y^{1+\theta}$
for $\theta\in(\frac{1}{2},1)$
(the solution sets of the two equations evidently coincide); we choose $\theta>\frac{1}{2}$ so that $Y^{1+\theta}$ is embedded
in $C_\mathrm{b}({\mathbb R}^2)$ and $\theta<1$ so that $T_\varepsilon(\zeta)$ vanishes in the limit $\varepsilon \rightarrow 0$. Note that the regularity index \(s\) for the space \(X\) must be taken larger than \(r = 1+\theta\) to preserve the embedding \(X \hookrightarrow Y^r\) (see Lemma~\ref{Basic embeddings});\
in fact all desired properties are satisfed for \(\tfrac{3}{2} < 1 + \theta < s < 2\). We establish Theorem \ref{Final strong existence thm} by applying Theorem \ref{IFT} with
\begin{equation}\label{eq:XX}
\WW=Y^{1+\theta}_\mathrm{e}= \{\zeta \in Y^{1+\theta}\colon \mbox{$\zeta(x,y)=\zeta(-x,y)=\zeta(x,-y)$ for all $(x,y) \in {\mathbb R}^2$}\},
\end{equation}
$W_0=B_M(0)\subseteq Y_\mathrm{e}^{1+\theta}$, $\Lambda_0=(-\varepsilon_0,\varepsilon_0)$ for a sufficiently small value of $\varepsilon_0$,
and
\begin{equation}\label{eq:GG}
\GG(\zeta,\varepsilon)=\zeta+G_{|\varepsilon|}(\zeta),
\end{equation}
where $\varepsilon$ has been replaced by $|\varepsilon|$ to have  $\GG(\zeta,\varepsilon)$ defined for $\varepsilon$ in
a full neighbourhood of the origin in ${\mathbb R}$.

We begin by verifying that the functions \(\zeta_k^\star\) belong to $Y_\mathrm{e}^{1+\theta}$.

\begin{proposition}
Each KP lump solution \(\zeta_k^\star\) belongs to $Y^2$.
\end{proposition}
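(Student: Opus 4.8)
The plan is to exploit the fact that $\zeta_k^\star$ solves the normalised KP equation \eqref{Normalised KP}, so as to obtain an explicit expression for its Fourier transform, and then to observe an exact algebraic cancellation that converts the $Y^2$ weight into a harmless factor. The point is that one should not attempt to estimate the apparently singular contributions $\frac{k_2^2}{k_1^2}$ and $\frac{k_2^4}{k_1^4}$ in the weight directly; these are controlled only by passing through the equation itself.

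First I would take the Fourier transform of \eqref{Normalised KP}. Writing $g=(\zeta_k^\star)^2$ and using $\widehat{\partial_x^2 f}=-k_1^2\hat f$ and $\widehat{\partial_y^2 f}=-k_2^2\hat f$, equation \eqref{Normalised KP} becomes
\[
\big(k_1^4+k_1^2+k_2^2\big)\,\hat\zeta_k^\star(\bfk)=-k_1^2\,\hat g(\bfk),
\]
so that
\[
\hat\zeta_k^\star(\bfk)=\frac{-k_1^2\,\hat g(\bfk)}{k_1^4+k_1^2+k_2^2}
\]
for almost every $\bfk$. The key observation is the elementary identity
\[
\bigg(1+k_1^2+\frac{k_2^2}{k_1^2}\bigg)k_1^2=k_1^4+k_1^2+k_2^2,
\]
which shows that multiplying the base weight $1+k_1^2+\frac{k_2^2}{k_1^2}$ by $k_1^2$ reproduces exactly the denominator appearing in $\hat\zeta_k^\star$. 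Consequently the factor $k_1^2$ in the numerator precisely compensates the singular weight, and one finds
\[
\bigg(1+k_1^2+\frac{k_2^2}{k_1^2}\bigg)\hat\zeta_k^\star=-\hat g
\]
pointwise almost everywhere, whence, by Plancherel's theorem for the unitary Fourier transform,
\[
|\zeta_k^\star|_{Y^2}^2=\int_{{\mathbb R}^2}\bigg(1+k_1^2+\frac{k_2^2}{k_1^2}\bigg)^{\!2}|\hat\zeta_k^\star|^2\dk
=\int_{{\mathbb R}^2}|\hat g|^2\dk=|g|_{L^2}^2.
\]

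It then remains only to verify that $g=(\zeta_k^\star)^2\in L^2({\mathbb R}^2)$. This follows from the decay estimate $|\zeta_k^\star(x,y)|\lesssim(1+x^2+y^2)^{-1}$ of Lemma~\ref{LWY results}(i), which gives $|g(x,y)|\lesssim(1+x^2+y^2)^{-2}$ and hence $\int_{{\mathbb R}^2}|g|^2\lesssim\int_{{\mathbb R}^2}(1+x^2+y^2)^{-4}\dx\dy<\infty$. The only real difficulty is conceptual rather than computational: one must recognise that the $k_2^4/k_1^4$ term cannot be bounded in isolation and that the equation supplies the exact cancellation; once the identity $(1+k_1^2+\frac{k_2^2}{k_1^2})k_1^2=k_1^4+k_1^2+k_2^2$ is spotted, the remaining steps are immediate.
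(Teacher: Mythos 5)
Your proof is correct and is essentially the paper's own argument: the paper notes that $(\zeta_k^\star)^2\in L^2({\mathbb R}^2)=Y^0$ by the decay estimate of Lemma~\ref{LWY results}(i) and then invokes the fact that $\tilde m(\Diffb)^{-1}$ is a lifting operator of order $2$ for the scale $\{Y^r\}_{r\geq 0}$ applied to the relation $\zeta_k^\star+\tilde m(\Diffb)^{-1}(\zeta_k^\star)^2=0$. Your explicit Fourier-side identity $(1+k_1^2+\tfrac{k_2^2}{k_1^2})k_1^2=k_1^4+k_1^2+k_2^2$ is precisely what that lifting property amounts to, so the two proofs coincide in substance.
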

\begin{proof}
First note that $(\zeta_k^\star)^2$ belongs to $L^2({\mathbb R}^2)=Y^0$ because \(|\zeta_k^\star(x,y)| \lesssim (1+x^2+y^2)^{-1}\) for all $(x,y) \in {\mathbb R}^2$
(see Proposition \ref{LWY results}(i)). Since \(\zeta_k^\star\) satisfies
$$
 \zeta_k^\star+\tilde{m}(\Diffb)^{-1}(\zeta_k^\star)^2=0
$$
and \(\tilde m(\Diffb)^{-1}\) is a lifting operator of order \(2\) for the scale $\{Y^r,|\cdot|_{Y^r}\}_{r \geq 0}$, one finds that \(\zeta_k^\star \in Y^2\). 
\end{proof}

Observe that \(\GG(\cdot,\varepsilon)\) is a continuously differentiable function \(B_M(0) \subseteq Y_\mathrm{e}^{1+\theta} \to Y_\mathrm{e}^{1+\theta}\)
for each fixed $\varepsilon \geq 0$, so that
\[
\lim_{\zeta \rightarrow \zeta_k^\star}|\diff_1\GG[\zeta, 0]-\diff_1\GG[\zeta_k^\star,0]|_{\LL(Y^{1+\theta},Y^{1+\theta})}=0.
\]
The facts that
$$
 \lim_{\varepsilon \rightarrow 0} |\GG(\zeta,\varepsilon)-\GG(\zeta,0)|_{Y^{1+\theta}}=0, \quad \lim_{\varepsilon \rightarrow 0} \
|\diff_1\GG[\zeta,\varepsilon]-\diff_1\GG[\zeta,0]|_{\LL(Y^{1+\theta},Y^{1+\theta})}=0
$$
uniformly over $\zeta \in B_M(0)\subseteq Y_\mathrm{e}^{1+\theta}$ are obtained from the equation
\[
\GG(\zeta,\varepsilon)-\GG(\zeta,0)
= \tilde{m}(\Diffb)^{-1}
\left(\chi_\varepsilon(\Diffb)
\left(\chi_\varepsilon(\Diffb)\zeta\right)^2-\zeta^2\right)
 +  T_{|\varepsilon|}(\zeta)
 \]
using Proposition~\ref{prop:T} and Corollary~\ref{Difference of G 3} below, which is a consequence of the next two lemmas.

\begin{lemma} \label{Difference of G 1}
Fix $\theta > \frac{1}{2}$. The estimate
\[
| \tilde m(\Diffb)^{-1} \chi_\varepsilon(\Diffb) \big( (( \chi_\varepsilon(\Diffb) + I) \zeta) ((\chi_\varepsilon(\Diffb) - I) \rho)\big) |_{Y^{1+\theta}} \lesssim \varepsilon | \zeta |_{Y^{1+\theta}} | \rho |_{Y^{1+\theta}}
\]
holds for all \(\zeta, \rho \in Y^{1+\theta}\).
\end{lemma}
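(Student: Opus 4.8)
The structure underlying this estimate is a low--high frequency product: the factor $(\chi_\varepsilon(\Diffb)+I)\zeta$ is a genuine (low- plus full-frequency) function bounded in $Y^{1+\theta}$, whereas $(\chi_\varepsilon(\Diffb)-I)\rho=-(I-\chi_\varepsilon(\Diffb))\rho$ has its spectrum supported in the high-frequency region $\{\bfk\colon w(\bfk)\gtrsim \delta^2/\varepsilon^2\}$, where $w(\bfk)=1+k_1^2+\tfrac{k_2^2}{k_1^2}$. The gain of $\varepsilon$ will come entirely from this second factor, so the whole point of the argument is to arrange matters so that all of the high-frequency smallness is captured.

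The plan is first to dispose of the operator $\tilde m(\Diffb)^{-1}\chi_\varepsilon(\Diffb)$ by showing that it maps $L^2({\mathbb R}^2)=Y^0$ boundedly into $Y^{1+\theta}$, uniformly in $\varepsilon$. Since $\beta>\frac13$ one has $\tilde m(\bfk)\gtrsim w(\bfk)$, so on the Fourier side the weight of the target norm obeys $w(\bfk)^{1+\theta}\tilde m(\bfk)^{-2}\chi_\varepsilon(\bfk)\lesssim w(\bfk)^{\theta-1}\le 1$, the last step using $w\ge 1$ together with $\theta\le 1$. Hence $|\tilde m(\Diffb)^{-1}\chi_\varepsilon(\Diffb)g|_{Y^{1+\theta}}\lesssim|g|_{L^2}$, and the claim reduces to proving $|P|_{L^2}\lesssim\varepsilon|\zeta|_{Y^{1+\theta}}|\rho|_{Y^{1+\theta}}$ for the product $P=\big((\chi_\varepsilon(\Diffb)+I)\zeta\big)\big((\chi_\varepsilon(\Diffb)-I)\rho\big)$.

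For the product I would use the H\"older splitting $|P|_{L^2}\le|(\chi_\varepsilon(\Diffb)+I)\zeta|_\infty\,|(\chi_\varepsilon(\Diffb)-I)\rho|_{L^2}$. The first factor is controlled by $|\zeta|_{Y^{1+\theta}}$, since the Fourier projection $\chi_\varepsilon(\Diffb)$ is bounded by $1$ on every $Y^r$ and $Y^{1+\theta}\hookrightarrow C_\mathrm{b}({\mathbb R}^2)$ by Proposition~\ref{Continuity of Y} (here one uses $\theta>\frac12$). For the second factor the spectrum of $(I-\chi_\varepsilon(\Diffb))\rho$ lies in $\{w\ge\delta^2/\varepsilon^2\}$, so that
\[
|(\chi_\varepsilon(\Diffb)-I)\rho|_{L^2}^2=\int_{w\ge\delta^2/\varepsilon^2} w^{-(1+\theta)}\,w^{1+\theta}|\hat\rho(\bfk)|^2\dk \lesssim \varepsilon^{2(1+\theta)}|\rho|_{Y^{1+\theta}}^2,
\]
whence $|(\chi_\varepsilon(\Diffb)-I)\rho|_{L^2}\lesssim\varepsilon^{1+\theta}|\rho|_{Y^{1+\theta}}$. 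Combining the three estimates gives $|P|_{L^2}\lesssim\varepsilon^{1+\theta}|\zeta|_{Y^{1+\theta}}|\rho|_{Y^{1+\theta}}$, and since $1+\theta>1$ this is $\lesssim\varepsilon|\zeta|_{Y^{1+\theta}}|\rho|_{Y^{1+\theta}}$ for $\varepsilon\le 1$, as required.

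The one delicate point—and the reason for insisting on the $L^\infty\cdot L^2$ structure—is the choice of H\"older exponents. A symmetric split $|P|_{L^2}\le|\cdot|_{L^4}|\cdot|_{L^4}$ using $Y^1\hookrightarrow L^4$ (Proposition~\ref{Y embeddings}(i)) would share the frequency weight $w$ between the two factors and extract only $\varepsilon^{\theta}$ from the high-frequency factor; as $\theta<1$ this is \emph{weaker} than $\varepsilon$ and would fail to close the argument. Placing the full-frequency factor in $L^\infty$ (which costs nothing beyond $\theta>\frac12$) frees the entire high-frequency weight $w^{1+\theta}$ for the $L^2$ factor and thereby delivers the full power of $\varepsilon$.
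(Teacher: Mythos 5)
Your proof is correct and follows essentially the same route as the paper's: factor out $\tilde m(\Diffb)^{-1}\chi_\varepsilon(\Diffb)$ as a bounded map $L^2\to Y^{1+\theta}$, split the product as $L^\infty\cdot L^2$ using $Y^{1+\theta}\hookrightarrow C_\mathrm{b}({\mathbb R}^2)$, and extract the smallness from the spectral support of $(\chi_\varepsilon(\Diffb)-I)\rho$. The only (harmless) difference is in the last step: the paper splits ${\mathbb R}^2\setminus C_\varepsilon$ into the regions $|k_1|>\delta/\varepsilon$ and $|k_2/k_1|>\delta/\varepsilon$ and uses only the $Y^1$ weight to get the factor $\varepsilon|\rho|_{Y^1}$, whereas you use the full $Y^{1+\theta}$ weight on $\{w\geq\delta^2/\varepsilon^2\}$ and obtain the slightly stronger factor $\varepsilon^{1+\theta}|\rho|_{Y^{1+\theta}}$.
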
\enlargethispage{5mm}
\begin{proof}
Recall that \(\tilde m(\Diffb)^{-1}\) is a lifting operator of order 2 for the scale $\{Y^r,|\cdot|_{Y^r}\}_{r \geq 0}$ and that \(\chi_\varepsilon(\Diffb)\) is a bounded projection on $L^2({\mathbb R}^2)$. It follows that
\begin{align*}
| \tilde m(\Diffb)^{-1} & \chi_\varepsilon(\Diffb) \big( (( \chi_\varepsilon(\Diffb) + I) \zeta) ((\chi_\varepsilon(\Diffb) - I) \rho)\big) |_{Y^{1+\theta}}\\
&\quad\leq | \chi_\varepsilon(\Diffb) \big( (( \chi_\varepsilon(\Diffb) + I) \zeta) ((\chi_\varepsilon(\Diffb) - I) \rho)\big)|_{L^2}\\
&\quad \leq  |(( \chi_\varepsilon(\Diffb) + I) \zeta) ((\chi_\varepsilon(\Diffb) - I) \rho) |_{L^2}\\
&\quad \leq  | ( \chi_\varepsilon(\Diffb) + I) \zeta|_\infty | (\chi_\varepsilon(\Diffb) - I) \rho |_{L^2} \\
&\quad \lesssim | ( \chi_\varepsilon(\Diffb) + I) \zeta|_{Y^{1+\theta}}| (\chi_\varepsilon(\Diffb) - I) \rho |_{L^2} \\
&\quad \leq 2| \zeta|_{Y^{1+\theta}} | (\chi_\varepsilon(\Diffb) - I) \rho |_{L^2},
\end{align*}
where the last line follows by the embedding $Y^{1+\theta}  \hookrightarrow C_\mathrm{b}({\mathbb R}^2)$.
To estimate \(| \chi_\varepsilon(\Diffb) - I) \zeta|_{L^2}\), note that
\[
\mathbb{R}^2 \setminus C_\varepsilon \subset \underbrace{\left\{ (k_1,k_2) \colon |k_1| > \frac{\delta}{\varepsilon}\right\}}_{\displaystyle= C_\varepsilon^1} \cup  \underbrace{\left\{ (k_1,k_2) \colon \left|\frac{k_2}{k_1}\right| > \frac{\delta}{\varepsilon}\right\}}_{\displaystyle=C_\varepsilon^2},
\]
so that
\begin{align*}
| (\chi_\varepsilon(\Diffb) - I) \zeta |_{L^2}^2 &= \int_{\mathbb{R}^2 \setminus C_\varepsilon} |\hat \zeta|^2 \dk \\
& \leq \int_{C_\varepsilon^1} |\hat \zeta|^2 \dk + \int_{C_\varepsilon^2} |\hat \zeta|^2 \dk\\
&\leq \frac{\varepsilon^2}{\delta^2} \int_{C_\varepsilon^1} k_1^2 |\hat \zeta|^2 \dk +  \frac{\varepsilon^2}{\delta^2} \int_{C_\varepsilon^2} \frac{k_2^2}{k_1^2} |\hat \zeta|^2 \dk \\
& \leq \frac{2\varepsilon^2}{\delta^2} | \zeta |_{Y^1}^2.\qedhere
\end{align*}
\end{proof}

\begin{lemma} \label{Difference of G 2}
Fix $\theta \in (0,1)$. The estimate
\[
| \tilde m(\Diffb)^{-1} ( \chi_\varepsilon(\Diffb) - I) (\zeta \rho) |_{Y^{1 +\theta}} \lesssim \varepsilon^{\frac{1-\theta}{2}} |\zeta|_{Y^1} |\rho|_{Y^1} \leq \varepsilon^{\frac{1-\theta}{2}} |\zeta|_{Y^{1+\theta}} |\rho|_{Y^{1+\theta}},
\]
holds for all \(\zeta, \rho \in Y^{1+\theta}\).
\end{lemma}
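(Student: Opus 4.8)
The plan is to work entirely on the Fourier side via Plancherel and to exploit the single structural fact that the projection $\chi_\varepsilon(\Diffb)-I$ restricts attention to the high-frequency set $\mathbb{R}^2\setminus C_\varepsilon$, on which the symbol $1+k_1^2+\frac{k_2^2}{k_1^2}$ is bounded below by a multiple of $\varepsilon^{-2}$. Combining this lower bound with the order-$2$ smoothing of $\tilde m(\Diffb)^{-1}$, and using that we only measure the output in $Y^{1+\theta}$ with $1+\theta<2$, produces the advertised positive power of $\varepsilon$. Concretely, the first step is to write
\[
|\tilde m(\Diffb)^{-1}(\chi_\varepsilon(\Diffb)-I)(\zeta\rho)|_{Y^{1+\theta}}^2
=\int_{\mathbb{R}^2\setminus C_\varepsilon}\Big(1+k_1^2+\tfrac{k_2^2}{k_1^2}\Big)^{1+\theta}\,\tilde m(\bfk)^{-2}\,|\widehat{\zeta\rho}(\bfk)|^2\dk,
\]
the integration being restricted to $\mathbb{R}^2\setminus C_\varepsilon$ since the spectrum of $(\chi_\varepsilon(\Diffb)-I)(\zeta\rho)$ is supported there. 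Because $\beta>\frac13$ one has $\tilde m(\bfk)\sim 1+k_1^2+\frac{k_2^2}{k_1^2}$ (this comparability is precisely the statement that $\tilde m(\Diffb)^{-1}$ is a lifting operator of order $2$ for the scale $\{Y^r\}$), so $\tilde m(\bfk)^{-2}\lesssim(1+k_1^2+\frac{k_2^2}{k_1^2})^{-2}$ and the weight collapses to $(1+k_1^2+\frac{k_2^2}{k_1^2})^{\theta-1}$ with exponent $\theta-1<0$.

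Next I would extract the power of $\varepsilon$ from the frequency localisation. Exactly as in the proof of Lemma~\ref{Difference of G 1}, on $\mathbb{R}^2\setminus C_\varepsilon$ one has either $|k_1|>\frac{\delta}{\varepsilon}$ or $|\frac{k_2}{k_1}|>\frac{\delta}{\varepsilon}$, so in every case $1+k_1^2+\frac{k_2^2}{k_1^2}\gtrsim\varepsilon^{-2}$. Since $\theta-1<0$, raising this lower bound to the power $\theta-1$ gives the pointwise estimate $(1+k_1^2+\frac{k_2^2}{k_1^2})^{\theta-1}\lesssim\varepsilon^{2(1-\theta)}$ throughout the region, and pulling this constant out of the integral leaves
\[
|\tilde m(\Diffb)^{-1}(\chi_\varepsilon(\Diffb)-I)(\zeta\rho)|_{Y^{1+\theta}}^2
\lesssim\varepsilon^{2(1-\theta)}\int_{\mathbb{R}^2}|\widehat{\zeta\rho}(\bfk)|^2\dk
=\varepsilon^{2(1-\theta)}|\zeta\rho|_{L^2}^2.
\]

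Finally I would control the residual $L^2$-norm of the product by the embedding $Y^1\hookrightarrow L^4(\mathbb{R}^2)$ from Proposition~\ref{Y embeddings}(i), so that $|\zeta\rho|_{L^2}\le|\zeta|_{L^4}|\rho|_{L^4}\lesssim|\zeta|_{Y^1}|\rho|_{Y^1}$. Taking square roots yields the (in fact stronger) bound $\varepsilon^{1-\theta}|\zeta|_{Y^1}|\rho|_{Y^1}$, which for $0<\varepsilon\le1$ and $\theta\in(0,1)$ is dominated by $\varepsilon^{\frac{1-\theta}{2}}|\zeta|_{Y^1}|\rho|_{Y^1}$; the second inequality in the statement then follows from $|\cdot|_{Y^1}\le|\cdot|_{Y^{1+\theta}}$, which holds since $(1+k_1^2+\frac{k_2^2}{k_1^2})^1\le(1+k_1^2+\frac{k_2^2}{k_1^2})^{1+\theta}$ pointwise. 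There is no serious obstacle here: the entire $\varepsilon$-gain is produced by evaluating the order-$2$ smoothing on the high-frequency set, so the hypothesis $\theta<1$ is essential (at $\theta=1$ the weight no longer decays and no power of $\varepsilon$ survives), while the only regularity input needed for the product is membership in $L^2$, for which the single embedding $Y^1\hookrightarrow L^4$ suffices and no algebra property of the anisotropic scale $\{Y^r\}$ is required.
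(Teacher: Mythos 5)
Your proof is correct, and it shares the paper's overall skeleton: compute the $Y^{1+\theta}$ norm on the Fourier side, use the comparability $\tilde m(\bfk)\sim 1+k_1^2+\frac{k_2^2}{k_1^2}$ (the order-$2$ lifting property, valid since $\beta>\frac{1}{3}$), exploit that $(\chi_\varepsilon(\Diffb)-I)(\zeta\rho)$ has spectrum in $\mathbb{R}^2\setminus C_\varepsilon$, and close with $|\zeta\rho|_{L^2}\le|\zeta|_{L^4}|\rho|_{L^4}\lesssim|\zeta|_{Y^1}|\rho|_{Y^1}$ via Proposition~\ref{Y embeddings}(i). The genuine difference is in how the power of $\varepsilon$ is extracted from the residual weight $\big(1+k_1^2+\frac{k_2^2}{k_1^2}\big)^{\theta-1}$. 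The paper splits $\mathbb{R}^2\setminus C_\varepsilon\subset C_\varepsilon^1\cup C_\varepsilon^2$, inserts on each piece the factor $\big(\tfrac{\varepsilon}{\delta}|\nu|\big)^{1-\theta}\ge 1$ (with $\nu=k_1$ on $C_\varepsilon^1$ and $\nu=\tfrac{k_2}{k_1}$ on $C_\varepsilon^2$) and then uses the algebraic bound $|\nu|/\big(1+k_1^2+\frac{k_2^2}{k_1^2}\big)\le\tfrac{1}{2}$; this produces only $\big(\tfrac{\varepsilon}{\delta}\big)^{1-\theta}$ inside the \emph{squared} norm, hence the exponent $\tfrac{1-\theta}{2}$ in the statement. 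You instead apply the lower bound $1+k_1^2+\frac{k_2^2}{k_1^2}\ge\delta^2\varepsilon^{-2}$, which holds on all of $\mathbb{R}^2\setminus C_\varepsilon$, directly to the weight via monotonicity of $t\mapsto t^{\theta-1}$; this avoids the region splitting and the insertion trick altogether and yields $\varepsilon^{2(1-\theta)}$ inside the squared norm, i.e.\ the sharper bound $\varepsilon^{1-\theta}|\zeta|_{Y^1}|\rho|_{Y^1}$, which for $\varepsilon\le 1$ implies the stated one. The sharpening is real but inessential for the paper's purposes: it would propagate to Corollary~\ref{Difference of G 3} (whose exponent is anyway capped at $1$ by Lemma~\ref{Difference of G 1}), and the implicit-function-theorem argument of Section~\ref{Existence} only needs these quantities to be $o(1)$ as $\varepsilon\to 0$. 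Your closing remarks are also accurate: $\theta<1$ is exactly what makes the weight decay, and no algebra property of the scale $\{Y^r\}$ is needed beyond the $L^4$ embedding.
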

\begin{proof}
For \(\nu \in \{ k_1,  \frac{k_2}{k_1}\}\) we find that
\begin{align*}
&\bigg( 1 + k_1^2 + \frac{k_2^2}{k_1^2} \bigg)^{1 + \theta} \bigg( 1 + k_1^2 + \frac{k_2^2}{k_1^2} \bigg)^{-2} \bigg( \frac{\varepsilon}{\delta} | \nu| \bigg)^{1-\theta} = \left( \frac{\varepsilon}{\delta} \right)^{1-\theta} \Bigg( \frac{\left| \nu \right|}{1 + k_1^2 +\frac{k_2^2}{k_1^2}} \Bigg)^{1-\theta} \leq \frac{1}{2} \left( \frac{\varepsilon}{\delta} \right)^{1-\theta},
\end{align*}
so that
\begin{align*}
| \tilde m(\Diffb)^{-1} &( \chi_\varepsilon(\Diffb) - I) \zeta \rho|_{Y^{1+\theta}}^2\\
&\lesssim \int_{C_\varepsilon^1 \cup C_\varepsilon^2} \left( 1 + k_1^2 + \frac{k_2^2}{k_1^2}\right)^{1+\theta} \left( 1 + k_1^2 + \frac{k_2^2}{k_1^2}\right)^{-2} |\FF[\zeta \rho]|^2 \dk\\
&\lesssim \left( \frac{\varepsilon}{\delta} \right)^{1-\theta} \int_{C_\varepsilon^1} \left( 1 + k_1^2 + \frac{k_2^2}{k_1^2}\right)^{1+\theta} \left( 1 + k_1^2 + \frac{k_2^2}{k_1^2}\right)^{-2} |k_1|^{1-\theta} |\FF[\zeta \rho]|^2 \dk\\
&\qquad \mbox{}+ \left( \frac{\varepsilon}{\delta} \right)^{1-\theta} \int_{C_\varepsilon^2} \left( 1 + k_1^2 + \frac{k_2^2}{k_1^2}\right)^{1+\theta} \left( 1 + k_1^2 + \frac{k_2^2}{k_1^2}\right)^{-2} \left| \frac{k_2}{k_1} \right|^{1-\theta} |\FF[\zeta \rho]|^2 \dk\\
& \leq \left( \frac{\varepsilon}{\delta} \right)^{1-\theta} |\zeta \rho|_{L^2}^2  \\
& \lesssim \left( \frac{\varepsilon}{\delta} \right)^{1-\theta} |\zeta |_{L^4}^2 |\rho |_{L^4}^2 \\
&  \lesssim \left( \frac{\varepsilon}{\delta} \right)^{1-\theta} |\zeta |_{Y^1}^2 |\rho |_{Y^1}^2,
\end{align*}
where we have used Parseval's theorem, the Cauchy-Schwarz inequality and the embedding\linebreak
\(Y^1 \hookrightarrow L^4({\mathbb R}^2)\) (see Proposition~\ref{Y embeddings}).
\end{proof}

\begin{corollary} \label{Difference of G 3}
Fix $\theta \in (\frac{1}{2},1)$. The estimate
\[
\left| \tilde m(\Diffb)^{-1} \Big( \chi_\varepsilon(\Diffb) \big( ( \chi_\varepsilon(\Diffb) \zeta) (\chi_\varepsilon(\Diffb) \rho) \big) - \zeta \rho\Big)\right |_{Y^{1+\theta}} \lesssim \varepsilon^{\frac{1-\theta}{2}} |\zeta|_{Y^{1+\theta}} |\rho|_{Y^{1+\theta}}
\]
holds for all $\zeta$, $\rho \in Y^{1+\theta}$.
\end{corollary}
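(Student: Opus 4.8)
The plan is to prove the estimate purely by algebra, reducing the left-hand side to the two forms already controlled by Lemmas~\ref{Difference of G 1} and~\ref{Difference of G 2}; no new analytic input should be needed. First I would telescope the argument of $\tilde{m}(\Diffb)^{-1}$ by inserting $\pm\,\chi_\varepsilon(\Diffb)(\zeta\rho)$, writing
\[
\chi_\varepsilon(\Diffb)\big((\chi_\varepsilon(\Diffb)\zeta)(\chi_\varepsilon(\Diffb)\rho)\big)-\zeta\rho
=\chi_\varepsilon(\Diffb)\big((\chi_\varepsilon(\Diffb)\zeta)(\chi_\varepsilon(\Diffb)\rho)-\zeta\rho\big)
+(\chi_\varepsilon(\Diffb)-I)(\zeta\rho).
\]
After applying $\tilde{m}(\Diffb)^{-1}$, the second term is bounded directly by Lemma~\ref{Difference of G 2}, and this is exactly what produces the factor $\varepsilon^{(1-\theta)/2}$ recorded in the statement.

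For the first term the decisive observation is the symmetric algebraic identity
\[
(\chi_\varepsilon(\Diffb)\zeta)(\chi_\varepsilon(\Diffb)\rho)-\zeta\rho
=\tfrac12\big(((\chi_\varepsilon(\Diffb)+I)\zeta)((\chi_\varepsilon(\Diffb)-I)\rho)
+((\chi_\varepsilon(\Diffb)+I)\rho)((\chi_\varepsilon(\Diffb)-I)\zeta)\big).
\]
I would verify this by expanding both products on the right: each contributes a leading term $(\chi_\varepsilon(\Diffb)\zeta)(\chi_\varepsilon(\Diffb)\rho)$ and a trailing term $-\zeta\rho$, while the two mixed terms $-(\chi_\varepsilon(\Diffb)\zeta)\rho+\zeta(\chi_\varepsilon(\Diffb)\rho)$ coming from the first summand cancel against their counterparts in the second summand because pointwise multiplication is commutative. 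Each of the two summands on the right is precisely of the shape treated in Lemma~\ref{Difference of G 1}, so applying $\tilde{m}(\Diffb)^{-1}\chi_\varepsilon(\Diffb)$ and invoking that lemma once with the pair $(\zeta,\rho)$ and once with the pair $(\rho,\zeta)$ bounds the contribution of the first term by $\varepsilon\,|\zeta|_{Y^{1+\theta}}|\rho|_{Y^{1+\theta}}$.

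Finally I would combine the two estimates and use that $\varepsilon\le\varepsilon^{(1-\theta)/2}$ for $0<\varepsilon<1$ (the exponent $(1-\theta)/2$ lies below $1$ since $\theta>\tfrac12$), so that the weaker $\varepsilon^{(1-\theta)/2}$ bound from Lemma~\ref{Difference of G 2} dominates and the stated inequality follows. I do not anticipate a genuine obstacle: once the symmetrisation identity is in place, the remainder is bookkeeping resting entirely on Lemmas~\ref{Difference of G 1} and~\ref{Difference of G 2}. The only point deserving care is checking that the symmetrisation really does eliminate all cross terms of the type $(\chi_\varepsilon(\Diffb)\zeta)\rho$ and $\zeta(\chi_\varepsilon(\Diffb)\rho)$, so that only products with one spectrally bounded factor and one spectrally truncated factor—the structure exploited in Lemma~\ref{Difference of G 1}—survive.
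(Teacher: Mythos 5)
Your proof is correct and is essentially the paper's own argument: after your two-step derivation (telescoping with $\pm\,\chi_\varepsilon(\Diffb)(\zeta\rho)$, then symmetrising), the resulting decomposition coincides exactly with the one the paper writes down in a single display, and you invoke Lemmas~\ref{Difference of G 1} and~\ref{Difference of G 2} in the same way. The concluding comparison of powers of $\varepsilon$ is also fine, so nothing is missing.
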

\begin{proof}
This result is obtained by writing
\begin{align*}
\tilde m(\Diffb)^{-1} & \Big( \chi_\varepsilon(\Diffb) \big( ( \chi_\varepsilon(\Diffb) \zeta) (\chi_\varepsilon(\Diffb) \rho) \big) - \zeta \rho\Big)\\
&=  \tfrac{1}{2} \tilde m(\Diffb)^{-1} \chi_\varepsilon(\Diffb) \big( ((\chi_\varepsilon(\Diffb) +1) \zeta) ( (\chi_\varepsilon(\Diffb) -1) \rho) \big)\\
&\qquad \mbox{}+ \tfrac{1}{2} \tilde m(\Diffb)^{-1} \chi_\varepsilon(\Diffb) \big( ((\chi_\varepsilon(\Diffb) +1) \rho) ( (\chi_\varepsilon(\Diffb) -1) \zeta) \big)\\ 
&\qquad \mbox{}+ \tilde m(\Diffb)^{-1} (\chi_\varepsilon(\Diffb) -1) (\zeta \rho),
\end{align*}
and applying Lemma \ref{Difference of G 1} to the first two terms on the right-hand side and Lemma \ref{Difference of G 2} to the third.
\end{proof}

It thus remains to show that
\[\diff_1\GG[\zeta_k^\star,0]=I+2\tilde{m}(\Diffb)^{-1}(\zeta_k^\star\cdot)\]
is an isomorphism; this fact {is a consequence of} the following result. \pagebreak

\begin{lemma}\label{lemma:compact}
The operator $\tilde{m}(\Diffb)^{-1}(\zeta_k^\star\cdot)\colon Y^{1+\theta} \rightarrow Y^{1+\theta}$ is compact.
\end{lemma}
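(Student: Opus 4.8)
The plan is to exploit two features in combination: the multiplier $\tilde{m}(\Diffb)^{-1}$ gains two derivatives on the scale $\{Y^r,|\cdot|_{Y^r}\}_{r\geq 0}$, while the lump $\zeta_k^\star$ decays at infinity. Neither feature alone suffices---the embedding $Y^{r^\prime}\hookrightarrow Y^{r}$ with $r^\prime>r$ is \emph{not} compact on the whole plane, and multiplication by $\zeta_k^\star$ does not by itself improve regularity---but together they yield compactness through a standard near-field/far-field argument. The observation that keeps the argument elementary is that one need not study multiplication by $\zeta_k^\star$ on the anisotropic space $Y^{1+\theta}$ directly (which is delicate because of the nonlocal factor $\tfrac{k_2^2}{k_1^2}$, corresponding to $\partial_x^{-1}\partial_y$); instead one routes the product through $L^2({\mathbb R}^2)$ using the embedding $Y^{1+\theta}\hookrightarrow C_\mathrm{b}({\mathbb R}^2)$.

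First I would record the relevant mapping properties. Since $1+\theta>\tfrac{3}{2}$, Proposition~\ref{Continuity of Y} gives $Y^{1+\theta}\hookrightarrow C_\mathrm{b}({\mathbb R}^2)$, and since $\zeta_k^\star\in Y^2\hookrightarrow L^2({\mathbb R}^2)\cap C_\mathrm{b}({\mathbb R}^2)$ decays like $(1+x^2+y^2)^{-1}$ (Lemma~\ref{LWY results}(i)), one has $|\zeta_k^\star \zeta|_{L^2}\leq |\zeta_k^\star|_{L^2}\,|\zeta|_\infty\lesssim |\zeta|_{Y^{1+\theta}}$, so multiplication by $\zeta_k^\star$ maps $Y^{1+\theta}$ boundedly into $Y^0=L^2({\mathbb R}^2)$. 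Because $\tilde{m}(\bfk)\gtrsim 1+k_1^2+\tfrac{k_2^2}{k_1^2}$ (here $\beta>\tfrac{1}{3}$), the operator $\tilde{m}(\Diffb)^{-1}$ lifts by order $2$ and maps $Y^0$ boundedly into $Y^2\hookrightarrow Y^{1+\theta}$. Thus $\tilde{m}(\Diffb)^{-1}(\zeta_k^\star\cdot)$ factors as $Y^{1+\theta}\to L^2({\mathbb R}^2)\to Y^2\hookrightarrow Y^{1+\theta}$.

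To prove compactness I would take a bounded sequence $(\zeta_n)$ in $Y^{1+\theta}$ and show that $\tilde{m}(\Diffb)^{-1}(\zeta_k^\star \zeta_n)$ has a convergent subsequence. By Proposition~\ref{Y embeddings}(ii) and a diagonal argument, a subsequence converges in $L^2_\mathrm{loc}({\mathbb R}^2)$ to its weak limit $\zeta\in Y^{1+\theta}$. The key step is to upgrade this to convergence of the products $\zeta_k^\star \zeta_n\to \zeta_k^\star \zeta$ in $L^2({\mathbb R}^2)$ by splitting the integral: on a ball $B_R$ one bounds $|\zeta_k^\star(\zeta_n-\zeta)|_{L^2(B_R)}\leq |\zeta_k^\star|_\infty\,|\zeta_n-\zeta|_{L^2(B_R)}$, which tends to zero for fixed $R$, while on the complement one bounds $|\zeta_k^\star(\zeta_n-\zeta)|_{L^2(B_R^c)}\leq \big(\sup_{B_R^c}|\zeta_k^\star|\big)|\zeta_n-\zeta|_{L^2}$, which is uniformly small for large $R$ by the decay of $\zeta_k^\star$ and the uniform $L^2$-bound on $(\zeta_n)$. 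Applying the bounded operator $\tilde{m}(\Diffb)^{-1}\colon L^2({\mathbb R}^2)\to Y^2\hookrightarrow Y^{1+\theta}$ then yields convergence of $\tilde{m}(\Diffb)^{-1}(\zeta_k^\star\zeta_n)$ in $Y^{1+\theta}$, establishing the claim.

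I expect the only genuine subtlety to be this near-field/far-field argument, where the decay of $\zeta_k^\star$ must be married to the local (Rellich) compactness supplied by Proposition~\ref{Y embeddings}(ii); the gain of two derivatives from $\tilde{m}(\Diffb)^{-1}$ is precisely what returns the limit to $Y^{1+\theta}$ rather than merely to $L^2({\mathbb R}^2)$. An essentially equivalent route replaces $\zeta_k^\star$ by its compactly supported truncations $\phi_R\zeta_k^\star$, shows each $\tilde{m}(\Diffb)^{-1}(\phi_R\zeta_k^\star\,\cdot)$ is compact by the same localisation, and concludes via the operator-norm tail bound $|\tilde{m}(\Diffb)^{-1}((1-\phi_R)\zeta_k^\star\,\cdot)|_{\LL(Y^{1+\theta},Y^{1+\theta})}\lesssim \sup_{B_R^c}|\zeta_k^\star|\to 0$, since compact operators are closed under norm limits.
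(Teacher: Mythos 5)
Your proof is correct and follows essentially the same route as the paper: extract a subsequence converging in $L^2_\mathrm{loc}$ via Proposition~\ref{Y embeddings}(ii) and a diagonal argument, upgrade to global $L^2$ convergence of the products $\zeta_k^\star\zeta_n$ using the boundedness of $\zeta_k^\star$ on the near field and its decay on the far field, and then return to $Y^{1+\theta}$ through the order-two lifting property $\tilde{m}(\Diffb)^{-1}\colon L^2({\mathbb R}^2)\to Y^2\hookrightarrow Y^{1+\theta}$. The only cosmetic difference is that the paper runs the compactness argument on sequences bounded in $Y^1$ (then restricts to $Y^{1+\theta}$), while you work directly on $Y^{1+\theta}$; this is immaterial.
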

\begin{proof}
Let $\{\zeta_j\}$ be a sequence which is bounded in $Y^1$.
We can find a subsequence of $\{\zeta_j\}$ (still denoted by $\{\zeta_j\}$)
which converges weakly in $L^2({\mathbb R}^2)$ (because $\{\zeta_j\}$ is bounded in $L^2({\mathbb R}^2)$)
and strongly in $L^2(|(x,y)|<n)$ for each $n \in {\mathbb N}$ (by Proposition \ref{Y embeddings}(ii) and a `diagonal'
argument). Denote the limit by $\zeta_\infty$. Since
\[
 |\zeta_k^\star\zeta_j-\zeta_k^\star\zeta_\infty|_{L^2(|(x,y)|<n)}\leq |\zeta_k^\star|_\infty |\zeta_j-\zeta_\infty|_{L^2(|(x,y)|<n)} \rightarrow 0
\]
as $j \rightarrow \infty$ for each $n \in {\mathbb N}$ and
\[
 \sup_j|\zeta_k^\star\zeta_j|_{L^2(|(x,y)|>n)}\leq \sup_{|(x,y)|>n}|\zeta_k^\star(x,y)|\sup_j|\zeta_j|_{L^2} \rightarrow 0
\]
as $n \rightarrow \infty$ we conclude that $\{\zeta_k^\star\zeta_j\}$ converges to $\zeta_k^\star\zeta_\infty$ as $j \rightarrow \infty$ in $L^2({\mathbb R}^2)$. It follows that
$\zeta \mapsto \zeta_k^\star \zeta$ is compact $Y^1 \rightarrow L^2({\mathbb R})$ and hence $Y^{1+\theta} \rightarrow L^2({\mathbb R})$;
the result follows from this fact and the observation that $\tilde{m}(\Diffb)^{-1}$ is continuous $L^2({\mathbb R}^2) \rightarrow Y^2 \hookrightarrow Y^{1+\theta}$.
\end{proof}

\begin{corollary}
The operator $I+2\tilde{m}(\Diffb)^{-1}(\zeta_k^\star\cdot)$ is an isomorphism $Y_\mathrm{e}^{1+\theta} \rightarrow Y_\mathrm{e}^{1+\theta}$.
\end{corollary}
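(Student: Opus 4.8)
The plan is to establish the result by the Fredholm alternative on the closed symmetric subspace $Y_\mathrm{e}^{1+\theta}$. Write $K=2\tilde{m}(\Diffb)^{-1}(\zeta_k^\star\cdot)$. First I would check that $K$ maps $Y_\mathrm{e}^{1+\theta}$ into itself: the lump $\zeta_k^\star$ is even in $x$ and in $y$, so multiplication by it preserves the symmetry class, and the symbol $\tilde m(\bfk)^{-1}$ is even in $k_1$ and in $k_2$, so the Fourier multiplier $\tilde m(\Diffb)^{-1}$ preserves it as well. Since $Y_\mathrm{e}^{1+\theta}$ is a closed $K$-invariant subspace of $Y^{1+\theta}$ and $K$ is compact on $Y^{1+\theta}$ by Lemma~\ref{lemma:compact}, its restriction to $Y_\mathrm{e}^{1+\theta}$ is compact. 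By the Riesz--Schauder theory $I+K$ is therefore a Fredholm operator of index zero, so it is an isomorphism of $Y_\mathrm{e}^{1+\theta}$ if and only if it is injective, and it remains to prove that $\ker(I+K)=\{0\}$.

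Suppose then that $w\in Y_\mathrm{e}^{1+\theta}$ satisfies $(I+K)w=0$, that is $w=-2\tilde m(\Diffb)^{-1}(\zeta_k^\star w)$, equivalently $\tilde m(\Diffb)w+2\zeta_k^\star w=0$. The aim of the next step is to upgrade $w$ to a smooth, evanescent function, so that the nondegeneracy result may be invoked. Because $\beta>\tfrac13$ one has $\tilde m(\bfk)\geq\tfrac12(\beta-\tfrac13)k_1^2$ and $\tilde m(\bfk)\geq k_2^2/k_1^2$, whence $\tilde m(\bfk)\gtrsim |k_2|$ and $\tilde m(\bfk)\gtrsim(1+|\bfk|^2)^{1/2}$; consequently $\tilde m(\Diffb)^{-1}$ maps $H^s({\mathbb R}^2)$ into $H^{s+1}({\mathbb R}^2)$ for each $s\geq 0$. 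Since $\zeta_k^\star$ is smooth with all derivatives bounded, multiplication by $\zeta_k^\star$ is bounded on each $H^s({\mathbb R}^2)$. Starting from $w\in Y^{1+\theta}\hookrightarrow L^2({\mathbb R}^2)$ and iterating the identity $w=-2\tilde m(\Diffb)^{-1}(\zeta_k^\star w)$ gains one derivative at each step, giving $w\in H^\infty({\mathbb R}^2)$; by Sobolev embedding $w$ is then smooth and decays to zero at infinity.

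It finally remains to identify the kernel equation with the linearised KP equation appearing in Lemma~\ref{LWY results}(iii). In Fourier variables the equation reads $\big(1+k_2^2/k_1^2+\tfrac12(\beta-\tfrac13)k_1^2\big)\hat w+2\,\widehat{\zeta_k^\star w}=0$; multiplying by $k_1^2$ (which loses no information, since $\hat w\in L^2({\mathbb R}^2)$ and $\{k_1=0\}$ is a null set) and reversing the normalisation through the change of variables $(\tilde x,\tilde y)=(\tfrac12(\beta-\tfrac13))^{1/2}(x,y)$ that converts \eqref{normalised steady KP} into \eqref{Normalised KP} turns it into
\[
\partial_x^2(-\partial_x^2 w+w+2\zeta_k^\star w)+\partial_y^2 w=0.
\]
As $w$ is smooth, evanescent and invariant under $w(x,y)\mapsto w(-x,y)$ and $w(x,y)\mapsto w(x,-y)$, Lemma~\ref{LWY results}(iii) forces $w=0$. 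Hence $I+K$ is injective, and therefore an isomorphism of $Y_\mathrm{e}^{1+\theta}$.

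I expect the main obstacle to be the second step: one must promote the a~priori weak kernel element, which lives only in the anisotropic space $Y^{1+\theta}$, to a genuinely smooth and decaying function so that the nondegeneracy statement of Lemma~\ref{LWY results}(iii) --- which is phrased for smooth evanescent solutions --- actually applies; a secondary difficulty is keeping careful track of the two normalisations \eqref{normalised steady KP} and \eqref{Normalised KP} of the KP equation when matching the kernel equation with that lemma. The restriction to the symmetric subspace is essential here, as it is exactly what excludes the (antisymmetric) translational modes that would otherwise lie in the kernel.
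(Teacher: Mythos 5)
Your proposal is correct and follows essentially the same route as the paper: compactness (Lemma~\ref{lemma:compact}) gives Fredholm index zero on the symmetric subspace, a bootstrap upgrades any kernel element to a smooth, decaying function, and the nondegeneracy statement of Lemma~\ref{LWY results}(iii) then forces it to vanish. The only cosmetic difference is that you run the bootstrap via the global multiplier bound $\tilde m(\bfk)\gtrsim(1+|\bfk|^2)^{1/2}$, so that $\tilde m(\Diffb)^{-1}\colon H^s({\mathbb R}^2)\to H^{s+1}({\mathbb R}^2)$, where the paper writes out the equivalent Fourier identities for $k_1\hat\zeta$ and $k_2\hat\zeta$; if anything you are more explicit than the paper about the evanescence of the kernel element and about the change of variables between \eqref{normalised steady KP} and \eqref{Normalised KP} needed to invoke the nondegeneracy lemma.
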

\begin{proof}
The previous result shows that $I+2\tilde{m}(\Diffb)^{-1}(\zeta_k^\star\cdot)\colon Y_\mathrm{e}^{1+\theta} \rightarrow Y_\mathrm{e}^{1+\theta}$ is Fredholm with index $0$; it therefore remains to show that it is injective. {Suppose that $\zeta \in Y_\mathrm{e}^{1+\theta}$ satisfies
\begin{equation}
 \zeta+2\tilde{m}(\Diffb)^{-1}(\zeta_k^\star\zeta)=0. \label{Integral form}
\end{equation}
It follows that
\[
k_1\hat{\zeta}=\frac{-2 k_1^3}{ k_1^2+\frac{1}{2}(\beta-\frac{1}{3})k_1^4+k_2^2}\FF[\zeta_k^\star\zeta],
\qquad
k_2\hat{\zeta}=\frac{ -2 k_1^2 k_2}{ k_1^2+ \frac{1}{2}(\beta-\frac{1}{3})k_1^4+k_2^2}\FF[\zeta_k^\star\zeta]
\]
and hence $\zeta \in H^{j+1}({\mathbb R}^2)$ whenever $\zeta_k^\star\zeta \in H^j({\mathbb R}^2)$.
Since $\zeta \in L^2({\mathbb R}^2)$ and $\zeta \in H^j({\mathbb R}^2)$ implies $\zeta_k^\star\zeta \in H^j({\mathbb R}^2)$
because $\zeta_k^\star \in C_\mathrm{b}^j({\mathbb R}^2)$,
the space of smooth functions on ${\mathbb R}^2$ with bounded derivatives up to order $j$,
we find by bootstrapping that $\zeta \in H^\infty({\mathbb R}^2)$.}

Since $\zeta$ is smooth and satisfies \eqref{Integral form}, it satisfies the linear equation
\[\big((\beta-\tfrac{1}{3})\zeta_{xx}+2\zeta+2(\zeta_k^\star\zeta)\big)_{xx}-\zeta_{zz}=0,\]
and this equation has only the trivial smooth, decaying,
symmetric solution (see Lemma \ref{LWY results}(iii)).\
\end{proof}

Having completed the proof of Theorem~\ref{Final strong existence thm}, we now finalise the proof of Theorem \ref{Main result} by tracing back  the steps in the reduction procedure to construct solutions to \eqref{steady FDKP} which are uniformly approximated by a suitable scaling of $\zeta_k^\star$.

\begin{lemma}
The formula
\[u = u_1 + u_2(u_1),
\quad u_1(x,y) = \varepsilon^2 \zeta_k^\varepsilon(\varepsilon x,\varepsilon^2 y)\]
defines a smooth solution to the steady FDKP equation \eqref{steady FDKP} which satisfies the estimate
\[u(x,y)=\varepsilon^2 \zeta_k^\star(\varepsilon x,\varepsilon^2 y) + o(\varepsilon^2)\]
uniformly over $(x,y) \in {\mathbb R}^2$.
\end{lemma}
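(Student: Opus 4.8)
The plan is to reverse the reduction carried out in Sections~\ref{Function spaces}--\ref{Reduction}. Starting from the solution $\zeta_k^\varepsilon\in Y_\varepsilon^{1+\theta}$ of the reduced equation~\eqref{Final reduced eqn} produced by Theorem~\ref{Final strong existence thm}, I would undo the KP scaling, setting $u_1(x,y)=\varepsilon^2\zeta_k^\varepsilon(\varepsilon x,\varepsilon^2 y)$, so that $u_1\in X_1$ has spectrum in the fixed cone $C$ and, because $\zeta_k^\varepsilon\in B_M(0)$ and $\varepsilon<M^{-2}$, lies in $U$. Lemma~\ref{u_2} then defines $u_2=u_2(u_1)\in X_2$, which solves~\eqref{system ii}; and since $\zeta_k^\varepsilon$ solves~\eqref{Final reduced eqn}, rescaling shows that $u_1$ solves~\eqref{system i} with $u_2=u_2(u_1)$ inserted. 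Hence $u=u_1+u_2$ solves~\eqref{steady FDKP}. Proposition~\ref{u is smooth} gives $u\in H^\infty({\mathbb R}^2)$, so that $u$ is smooth with bounded derivatives of every order, and the symmetry of $\zeta_k^\varepsilon$ together with the symmetry-preserving nature of the reduction yields the invariance of $u$ under $(x,y)\mapsto(-x,y)$ and $(x,y)\mapsto(x,-y)$ required in Theorem~\ref{Main result}.

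For the approximation I would decompose
\[
u(x,y)-\varepsilon^2\zeta_k^\star(\varepsilon x,\varepsilon^2 y)=\varepsilon^2\big(\zeta_k^\varepsilon-\zeta_k^\star\big)(\varepsilon x,\varepsilon^2 y)+u_2(x,y)
\]
and bound both terms in $L^\infty$. The first is harmless: since $1+\theta>\tfrac{3}{2}$ we have $Y^{1+\theta}\hookrightarrow C_\mathrm{b}({\mathbb R}^2)$ (Proposition~\ref{Continuity of Y}), so
\[
\big|\varepsilon^2(\zeta_k^\varepsilon-\zeta_k^\star)(\varepsilon\,\cdot\,,\varepsilon^2\,\cdot\,)\big|_\infty=\varepsilon^2|\zeta_k^\varepsilon-\zeta_k^\star|_\infty\lesssim\varepsilon^2|\zeta_k^\varepsilon-\zeta_k^\star|_{Y^{1+\theta}}=o(\varepsilon^2),
\]
the last step using the convergence $\zeta_k^\varepsilon\to\zeta_k^\star$ in $Y^{1+\theta}$ from Theorem~\ref{Final strong existence thm}. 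It thus remains to prove that $|u_2|_\infty=o(\varepsilon^2)$.

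This is the heart of the matter. The direct estimate $|u_2|_\infty\lesssim|u_2|_X\lesssim\varepsilon|u_1|_\varepsilon^2=\varepsilon^2|\zeta_k^\varepsilon|_{Y^1}^2$ (Proposition~\ref{Basic embeddings}, Lemma~\ref{u_2}) only gives $O(\varepsilon^2)$, since $|\zeta_k^\varepsilon|_{Y^1}^2\to|\zeta_k^\star|_{Y^1}^2>0$; the missing factor of $\varepsilon$ must be extracted from the scaling. From~\eqref{eqn with G} and~\eqref{G} one has $u_2=-n(\Diffb)^{-1}(1-\chi(\Diffb))\big(u_1^2+2u_1u_2+u_2^2+\varepsilon^2u_2\big)$, and using the bound $|n(\Diffb)^{-1}(1-\chi(\Diffb))v|_X\lesssim|v|_Z$ (Proposition~\ref{n is an iso}), the embedding $X\hookrightarrow C_\mathrm{b}({\mathbb R}^2)$ and $|(1-\chi(\Diffb))v|_Z\le|v|_Z$,
\[
|u_2|_\infty\lesssim|u_2|_X\lesssim|(1-\chi(\Diffb))u_1^2|_Z+|u_1u_2|_Z+|u_2^2|_Z+\varepsilon^2|u_2|_Z.
\]
The leading term I would treat sharply by exploiting that $u_1$ has spectrum in the fixed set $C\subseteq[-\delta,\delta]\times[-\delta^2,\delta^2]$, so that $u_1^2$ has spectrum in the bounded, $\varepsilon$-independent box $C+C$, on which the $|\cdot|_Z$-weight $1+|\bfk|+k_1^2|\bfk|^{2s-3}$ is bounded; hence $|(1-\chi(\Diffb))u_1^2|_Z\lesssim|u_1^2|_{L^2}$. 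Computing this norm directly through the KP scaling $\widehat{u_1^2}(\bfk)=\varepsilon\,\widehat{(\zeta_k^\varepsilon)^2}(k_1/\varepsilon,k_2/\varepsilon^2)$ gives
\[
|u_1^2|_{L^2}=\varepsilon^{5/2}\big|(\zeta_k^\varepsilon)^2\big|_{L^2}=\varepsilon^{5/2}|\zeta_k^\varepsilon|_{L^4}^2\lesssim\varepsilon^{5/2}|\zeta_k^\varepsilon|_{Y^1}^2\lesssim\varepsilon^{5/2},
\]
using $Y^1\hookrightarrow L^4({\mathbb R}^2)$ (Proposition~\ref{Y embeddings}) and the boundedness of $|\zeta_k^\varepsilon|_{Y^1}$. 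Feeding $|u_1|_\varepsilon\lesssim\varepsilon^{1/2}$ and the crude bound $|u_2|_X\lesssim\varepsilon^2$ into Proposition~\ref{Products} renders the remaining terms of higher order, namely $|u_1u_2|_Z\lesssim\varepsilon^{7/2}$ and $|u_2^2|_Z,\ \varepsilon^2|u_2|_Z\lesssim\varepsilon^4$. Hence $|u_2|_\infty\lesssim\varepsilon^{5/2}=o(\varepsilon^2)$, which with the first term yields the claimed uniform approximation.

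The step I expect to be the main obstacle is exactly this upgrade from $O(\varepsilon^2)$ to $o(\varepsilon^2)$ for $u_2$: the estimates of Lemma~\ref{u_2} and Proposition~\ref{Products} are calibrated to the $\varepsilon$-weighted norm $|\cdot|_\varepsilon$ and are not sharp for the sup-norm of the high-frequency correction. The observation that unlocks the argument is that $u_1^2$ carries a compact, $\varepsilon$-independent spectral support, which simultaneously trivialises the $|\cdot|_Z$-weight there and lets the explicit KP scaling produce the extra half-power $\varepsilon^{1/2}$; one must only confirm that $|\zeta_k^\varepsilon|_{Y^1}$ (equivalently $|\zeta_k^\varepsilon|_{L^4}$) stays bounded as $\varepsilon\to0$, which is immediate from the $Y^{1+\theta}$-convergence to $\zeta_k^\star$.
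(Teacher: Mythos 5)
Your proposal is correct, and its skeleton coincides with the paper's: undo the KP scaling to get $u_1\in U$, invoke Lemma~\ref{u_2} and Proposition~\ref{u is smooth}, split the error as $\varepsilon^2(\zeta_k^\varepsilon-\zeta_k^\star)(\varepsilon x,\varepsilon^2 y)+u_2(u_1)$, and handle the first term via $Y^{1+\theta}\hookrightarrow C_\mathrm{b}({\mathbb R}^2)$ and Theorem~\ref{Final strong existence thm}. The difference lies entirely in the estimate of $|u_2(u_1)|_\infty$, and here your version is not just an alternative --- it repairs a flaw in the published argument. The paper disposes of this term with the chain $|u_2(u_1)|_\infty\lesssim|u_2(u_1)|_{X_2}\lesssim\varepsilon|u_1|_\varepsilon^2\lesssim\varepsilon^3$, justified by the identity $|u_1|_\varepsilon=\varepsilon|\zeta|_{Y^1}$. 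That identity contradicts the scaling relation $|u_1|_\varepsilon^2=\varepsilon|\zeta|_{Y^1}^2$ stated (correctly) at the end of Section~\ref{Function spaces} and confirmed by the consistency of the estimates \eqref{S estimates}; the correct relation $|u_1|_\varepsilon=\varepsilon^{1/2}|\zeta|_{Y^1}$ makes the paper's chain terminate at $O(\varepsilon^2)$, which is exactly the obstruction you identify and is insufficient for the claimed $o(\varepsilon^2)$.

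Your repair is sound in every step I can check. Returning to the fixed-point equation \eqref{eqn with G}, the spectrum of $u_1^2$ lies in $\overline{C+C}\subseteq[-2\delta,2\delta]\times[-2\delta^2,2\delta^2]$, on which the $Z$-weight $1+|\bfk|+k_1^2|\bfk|^{2s-3}$ is bounded (here $2s-3>0$ matters), so $|(1-\chi(\Diffb))u_1^2|_Z\lesssim|u_1^2|_{L^2}=\varepsilon^{5/2}|\zeta_k^\varepsilon|_{L^4}^2\lesssim\varepsilon^{5/2}$ by Parseval, the KP scaling and $Y^1\hookrightarrow L^4({\mathbb R}^2)$ (Proposition~\ref{Y embeddings}), with $|\zeta_k^\varepsilon|_{Y^1}$ bounded by the $Y^{1+\theta}$-convergence. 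Proposition~\ref{Products}, the crude bounds $|u_1|_\varepsilon\lesssim\varepsilon^{1/2}$, $|u_2|_X\lesssim\varepsilon^2$ and the embedding $X\hookrightarrow Z$ then render the terms $u_1u_2$, $u_2^2$ and $\varepsilon^2u_2$ of order $\varepsilon^{7/2}$ or smaller, whence $|u_2(u_1)|_\infty\lesssim|u_2(u_1)|_X\lesssim\varepsilon^{5/2}=o(\varepsilon^2)$, and indeed the stronger conclusion $u(x,y)=\varepsilon^2\zeta_k^\star(\varepsilon x,\varepsilon^2 y)+\varepsilon^2 o(1)+O(\varepsilon^{5/2})$ uniformly over $(x,y)\in{\mathbb R}^2$. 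What your approach buys is precisely the missing half power of $\varepsilon$, extracted from the compact, $\varepsilon$-independent spectral support of $u_1^2$; what the paper's one-line approach would buy --- brevity --- is unavailable, since as written it proves only $O(\varepsilon^2)$.
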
\pagebreak
\begin{proof}
Theorem~\ref{Final strong existence thm} implies that
\[
|\zeta_k^\varepsilon-\zeta_k^\star|_\infty =o(1)
\]
as $\varepsilon \rightarrow 0$ because of the embedding \(Y^{1+\theta} \hookrightarrow C_\mathrm{b}(\mathbb{R}^2)\)
(see Proposition \ref{Continuity of Y}).
It follows that
\begin{align*}
u_1(x,y) & = \varepsilon^2 \zeta_k^\star(\varepsilon x,\varepsilon^2 y)
+\varepsilon^2 (\zeta_k^\varepsilon-\zeta_k^\star)(\varepsilon x,\varepsilon^2 y)  \\
& =  \varepsilon^2 \zeta_k^\star(\varepsilon x,\varepsilon^2 y) + o(\varepsilon^2)
\end{align*}
as $\varepsilon \rightarrow 0$ uniformly over $(x,y) \in {\mathbb R}^2$, while
\[|u_2(u_1)|_\infty \lesssim |u_2(u_1)|_{X_2} \lesssim \varepsilon | u_1 |_\varepsilon^2 \lesssim \varepsilon^3\]
because $|u_2(u_1)|_{X_2} \lesssim \varepsilon |u_1|_\varepsilon^2$ and $| u_1 |_\varepsilon = \varepsilon|\zeta|_{Y^1}$ with $\zeta \in B_M(0)\subseteq Y_\varepsilon^1$.
The fact that $u=u_1+u_2(u_1)$ is smooth follows from Proposition \ref{u is smooth}.
\end{proof}

%
%
%

\begin{Backmatter}

%
%

%
%
%


\section*{References}
\printbibliography[heading=none]

\end{Backmatter}

\end{document}